\newtheorem{theorem}{Theorem}[section]
\newtheorem{cor}[theorem]{Corollary}
\newtheorem{lemma}[theorem]{Lemma}
\newtheorem{prop}[theorem]{Proposition}
\newtheorem{question}[theorem]{Question}
\newtheorem{conjecture}[theorem]{Conjecture}
\theoremstyle{definition}
\newtheorem{definition}[theorem]{Definition}
\theoremstyle{remark}
\newtheorem{remark}[theorem]{Remark}
\numberwithin{equation}{section}
\newcommand{\R}{\mathbb R}
\newcommand{\C}{\mathbb C}
\newcommand{\N}{\mathbb N}
\newcommand{\SSS}{\mathcal{S}}
\newcommand{\W}{W^{1,n}(\mathbb R^n, \mathbb R^n)}
\newcommand{\LL}{L^2(\mathbb C, \mathbb C)}
\newcommand{\HR}{{\mathcal H}^1(\mathbb R^n)}
\newcommand{\Hz}{{\mathcal H}^1_z(\Omega)}
\newcommand{\ds}{\displaystyle}
\newcommand{\BMO}{\operatorname{BMO}(\mathbb R^n)}
\newcommand{\VMO}{\operatorname{VMO}(\mathbb R^n)}
\newcommand{\CO}{C_c^\infty(\mathbb R^n)}
\newcommand{\Aint}{-\nobreak \hskip-11pt \nobreak\int}
\newcommand{\B}{\mathscr{B}}
\newcommand{\M}{\mathscr{M}}
\newcommand{\PP}{\mathscr{P}}
\newcommand{\Q}{\mathcal{Q}}
\newcommand{\LLL}{\mathscr{L}}
\newcommand{\defeq}{\mathrel{:\mkern-0.25mu=}}
\newcommand{\eqdef}{\mathrel{=\mkern-0.25mu:}}
\DeclareMathOperator{\dive}{div}
\DeclareMathOperator{\curl}{curl}
\DeclareMathOperator{\D}{D}
\DeclareMathOperator{\supp}{supp}
\begin{document}

% \title[short text for running head]{full title}
\title[On the Hardy space theory of compensated compactness quantities]{On the Hardy space  theory of compensated compactness quantities}

%    Only \author and \address are required; other information is
%    optional.  Remove any unused author tags.

%    author one information
% \author[short version for running head]{name for top of paper}
\author{Sauli Lindberg}
\address{}
\curraddr{Department of Mathematics, Universidad Aut\'{o}noma de Madrid, E-28049 Madrid, Spain; ICMAT CSIC-UAM-UCM-UC3M, E-28049 Madrid, Spain}
\email{sauli.lindberg@uam.es}
\thanks{The author was supported by the ERC Starting grant no. 307179. He also wishes to thank }

%    \subjclass is required.
\subjclass[2010]{Primary 46E30, 46N20; Secondary 35B45, 35A01}

\date{}

\dedicatory{}

%    Abstract is required.
\begin{abstract}
We make progress on a problem of R. Coifman, P.-L. Lions, Y. Meyer, and S. Semmes from 1993 by showing that the Jacobian operator $J$ does not map $W^{1,n}(\mathbb R^n,\mathbb R^n)$ onto the Hardy space $\mathcal{H}^1(\mathbb R^n)$ for any $n \ge 2$. The related question about surjectivity of $J \colon \dot{W}^{1,n}(\mathbb R^n,\mathbb R^n) \to \mathcal{H}^1(\mathbb R^n)$ is still open.

The second main result and its variants reduce the proof of $\mathcal{H}^1$ regularity of a large class of compensated compactness quantities to an integration by parts or easy arithmetic, and applications are presented. Furthermore, we exhibit a class of nonlinear partial differential operators in which weak sequential continuity is a strictly stronger condition than $\mathcal{H}^1$ regularity, shedding light on another problem of Coifman, Lions, Meyer, and Semmes.
\end{abstract}

\maketitle

%    Text of article.

\section{Introduction}
\textit{Compensated compactness} refers to the phenomenon of a nonlinear quantity enjoying nontrivial compactness or continuity properties due to, e.g., cancellations inherent in its algebraic structure. As an example relevant to this article, if $u^j \rightharpoonup u$ in $\W$, then the Jacobian determinants satisfy $J u^j \to J u$ in $\mathcal{D}'(\R^n)$ even though the individual terms of the determinants need not converge in $\mathcal{D}'(\R^n)$. Compensated compactness theory originated in the pioneering work of F. Murat and L. Tartar in the 1970's, and it has been used especially in the study of hyperbolic conservation laws. For information on compensated compactness see e.g. ~\cite{Dac82}, ~\cite{Daf10}, ~\cite{DiP85}, ~\cite{Eva90}, ~\cite{Hor97}, ~\cite{Lu03}, ~\cite{Mur78}, ~\cite{Mur79}, ~\cite{Tar79}, ~\cite{Tar83}, and the references contained therein.

The aim of this article is to advance the Hardy space theory of compensated compactness quantities which was initiated by R. Coifman, P.-L. Lions, Y. Meyer, and S. Semmes in the celebrated article ~\cite{CLMS93}. As shown in ~\cite{CLMS93}, numerous compensated compactness quantities such as Jacobians of mappings in $\W$ belong to the Hardy space $\HR$ (see \textsection \ref{H1 regularity of Jacobians}). One of the recurring themes of ~\cite{CLMS93} is the determination of the exact ranges of $\mathcal{H}^1$ regular compensated compactness quantities. In particular, the following problem is posed at ~\cite[p. 258]{CLMS93}.

\begin{question}[~\cite{CLMS93}] \label{CLMS question}
Does the Jacobian operator map $W^{1,2}(\R^2, \R^2)$ onto ${\mathcal H}^1(\R^2)$?
\end{question}

This article provides a negative answer to Question \ref{CLMS question} (see Theorem \ref{Main theorem} and Corollary \ref{Main corollary}). We also present two general non-surjectivity results, Theorems \ref{Bilinear theorem} and \ref{Extension to homogeneous polynomials}, that cover various other quantities. The surjectivity question is, however, still open for operators whose domain of definition is an \textit{homogeneous} Sobolev space, e.g. $J \colon \dot{W}^{1,n}(\R^n,\R^n) \to \HR$ (see \textsection \ref{Open questions}).

\vspace{0.2cm}
Another question posed in \cite{CLMS93} is whether compensated compactness and $\mathcal{H}^1$ regularity are, under natural conditions, equivalent -- see \textsection \ref{The equivalence between H1 regularity and weak sequential continuity} for the exact statement of the problem in ~\cite{CLMS93}. For a large and natural class of multilinear partial differential operators the answer is positive, as follows in an implicit way already from ~\cite[Theorem V.1]{CLMS93} and the work of R. Coifman and L. Grafakos in ~\cite{CG92} and ~\cite{Gra92}. We give a rather elementary proof of one version of this fact and formulate the result in Corollary \ref{Compensated integrability corollary}. In \textsection \ref{H1 regular quantities not enjoying weak continuity} we show that for more general homogeneous polynomials of partial derivatives, $\mathcal{H}^1$ regularity is a \textit{strictly weaker} condition than compensated compactness.

\vspace{0.2cm}

As a partial result concerning Question \ref{CLMS question}, Coifman, Lions, Meyer, and Semmes showed that $\HR$ is the minimal closed subspace of $\HR$ that contains $J u$ for every $u \in \W$. They achieved this by proving the following analogue of the classical atomic decomposition of $\HR$: every $h \in \HR$ can be expressed as $h = \sum_{j=1}^\infty \lambda^j J u^j$, where $\sum_{j=1}^\infty |\lambda^j| \lesssim \|h\|_{\mathcal{H}^1}$ and each $u^j \in \W$ satisfies $\|\D u^j\|_{L^n} \le 1$.\footnote{Here and elsewhere, the notation $X \lesssim Y$ means $|X| \le C Y$, where $C > 0$ is a constant that depends on parameters that are rather clear from context, in this instance the dimension $n$. In order to smoothen the exposition we will not display dependence on dimensions of Euclidean spaces or exponents $k$ and $p$ when presenting estimates in terms of Sobolev norms $\|\cdot\|_{W^{k,p}}$ or seminorms $\|\nabla^k \cdot\|_{L^p}$, but such dependence is always tacitly assumed. The notation $X \lesssim_B Y$ is used when we wish to emphasize the dependence of the implicit constant $C$ on a parameter $B$.} We show that the condition $\|\operatorname{D} u^j\|_{L^n} \le 1$ cannot be replaced by $\|u^j\|_{W^{1,n}} \le 1$.

\begin{theorem} \label{Main theorem}
For every $n \ge 2$, the set
\[\left\{ \sum_{j=1}^\infty \lambda^j J u^j \colon \|u^j\|_{W^{1,n}} \le 1 \text{ for every } j \in \N, \, \; \sum_{j=1}^\infty |\lambda^j| < \infty \right\}\]
is of the first category in $\HR$.
\end{theorem}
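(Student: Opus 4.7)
The plan is to cover the set from the statement --- call it $S$ --- by a countable union of closed subsets of $\HR$, each with empty interior; meagerness then follows from the Baire category theorem. The functionals I will use arise from pairings with bounded Lipschitz truncations of the coordinate function $x_1$. Concretely, fix a nonincreasing $\chi \in C_c^\infty([0, 2))$ with $\chi \equiv 1$ on $[0, 1]$, and for each $R > 0$ set $\tilde\phi_R(x) \defeq x_1\, \chi(|x|/R)$. Then $\tilde\phi_R$ belongs to $L^\infty \cap \mathrm{Lip}(\R^n)$ with $\|\nabla \tilde\phi_R\|_{L^\infty} \le C_0$ uniformly in $R$, and it agrees with $x_1$ on $B(0, R)$; the functional $L_R(h) \defeq \int \tilde\phi_R\, h\, dx$ is thus continuous on $\HR$.

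The core a priori estimate is $|L_R(Ju)| \le C_1\, \|u\|_{W^{1,n}}^n$ for $u \in \W$, with $C_1$ independent of $R$. To derive it, I would expand the Jacobian along the first row and use the Piola identity $\sum_{j=1}^n \partial_j[(-1)^{1+j} M_{1j}(\D u)] = 0$, where $M_{1j}(\D u)$ denotes the $(1, j)$-minor, to obtain the distributional divergence structure $Ju = \sum_{j=1}^n \partial_j\bigl[u_1 (-1)^{1+j} M_{1j}(\D u)\bigr]$. Integration by parts against $\tilde\phi_R$ and H\"older's inequality with exponents $n$ and $n/(n-1)$ then yield
\[
|L_R(Ju)| \le C_0 \|u_1\|_{L^n} \sum_{j=1}^n \|M_{1j}(\D u)\|_{L^{n/(n-1)}} \lesssim \|u\|_{L^n}\, \|\D u\|_{L^n}^{n-1} \le \|u\|_{W^{1,n}}^n.
\]
Crucially, $\|u_1\|_{L^n}$ enters this estimate, so the \emph{full} $W^{1,n}$ norm (and not merely $\|\D u\|_{L^n}$) controls $L_R(Ju)$ --- this is precisely the feature distinguishing $W^{1,n}$ from $\dot W^{1,n}$. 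The identity and estimate extend from $C_c^\infty$ to $\W$ by density and by multilinear continuity of $\D u \mapsto M_{1j}(\D u)$ from $\W$ into $L^{n/(n-1)}$. Combining this with the absolute convergence of $\sum_j \lambda^j Ju^j$ in $\HR \subset L^1$ (which holds since $\|Ju^j\|_{\HR} \lesssim \|u^j\|_{W^{1,n}}^n$ by \cite{CLMS93}) and with the $L^\infty$--$L^1$ continuity of $L_R$, I get $|L_R(h)| \le C_1 N$ for every $h$ admitting a representation $\sum_j \lambda^j Ju^j$ with $\|u^j\|_{W^{1,n}} \le 1$ and $\sum_j |\lambda^j| \le N$. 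Setting $F_N \defeq \bigcap_{R > 0}\{h \in \HR : |L_R(h)| \le C_1 N\}$, which is closed in $\HR$, then gives $S \subset \bigcup_{N \ge 1} F_N$.

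To finish I will show that each $F_N$ has empty interior; since $F_N$ is convex and balanced, it suffices to produce, for every $\delta > 0$, some $h \in B_{\HR}(0, \delta) \setminus F_N$. Let $\eta \in C_c^\infty(B(0,1))$ be nonnegative with $\eta \not\equiv 0$, and set $v \defeq (\eta, x_2 \eta, \ldots, x_n \eta)$. Row-reducing $\D v$ (subtract $x_i$ times the first row from the $i$th row for $i \ge 2$) yields $Jv = \eta^{n-1} \partial_1 \eta = n^{-1} \partial_1(\eta^n)$, so $\int x_1 Jv\, dx = -M$ with $M \defeq n^{-1}\int \eta^n\, dx > 0$. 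The rescaled element $h_R(x) \defeq R^{-n} Jv(x/R)$ is supported in $B(0, R)$ and, by scale invariance of the Hardy norm, satisfies $\|h_R\|_{\HR} = \|Jv\|_{\HR}$; a change of variables gives $L_R(h_R) = \int x_1 h_R\, dx = -RM$, because $\tilde\phi_R \equiv x_1$ on $\supp h_R$. Setting $h \defeq (\delta/(2\|Jv\|_{\HR}))\, h_R$ therefore produces $\|h\|_{\HR} = \delta/2 < \delta$ while $|L_R(h)| = \delta R M/(2\|Jv\|_{\HR})$, which exceeds $C_1 N$ once $R$ is large enough. I expect the main technical point to be justifying that the integration by parts identity and the $R$-uniform bound propagate cleanly from $C_c^\infty$ through $\W$ to the absolutely convergent Hardy-space series; this is handled by the multilinear continuity noted above together with the $L^\infty$ boundedness of each $\tilde\phi_R$.
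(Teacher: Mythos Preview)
Your argument is correct and takes a genuinely different route from the paper. The paper proceeds by contradiction through an abstract Banach-space lemma (Lemma~\ref{Main lemma}): assuming the set is of second category forces the norm equivalence $\|b\|_{\mathrm{BMO}} \lesssim \sup_{\|u\|_{W^{1,n}} \le 1} \int b\,Ju$ on all of $\BMO$; one then fixes $b \in C_c^\infty$, rescales it as $b(\tau\cdot)$, and uses the same integration-by-parts/Young inequality step you use (yielding $\int b\,Ju \lesssim M^n\|u\|_{L^n}^n + M^{-n'}\|\nabla b\|_{L^\infty}^{n'}\|\D u\|_{L^n}^n$) to contradict the norm equivalence for small $\tau$. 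Your proof bypasses the abstract lemma entirely: you exhibit the covering sets $F_N$ explicitly via the uniformly Lipschitz test functions $\tilde\phi_R$, and you produce concrete witnesses $h_R$ of fixed $\HR$-norm but unbounded $|L_R(h_R)|$ by rescaling a single Jacobian. Both proofs rest on the same core inequality $|\int \phi\,Ju| \lesssim \|\nabla\phi\|_{L^\infty}\|u\|_{L^n}\|\D u\|_{L^n}^{n-1}$ and on the mismatch between the scaling of $\|u\|_{L^n}$ and that of $\HR$/$\BMO$; the paper packages this mismatch as a failed norm equivalence under dilation $\tau \to 0$, while you package it as unboundedness of $\sup_R |L_R|$ on bounded $\HR$-balls. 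Your version is more elementary and self-contained for this particular statement; the paper's functional-analytic lemma, on the other hand, is reusable and drives the analogous non-surjectivity results for the bilinear and null-Lagrangian operators in Theorems~\ref{Bilinear theorem} and~\ref{Extension to homogeneous polynomials}.
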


Since the Jacobian determinant is $n$-homogeneous, Theorem \ref{Main theorem} can be stated equivalently by saying that $\{\sum_{j=1}^\infty J u^j \colon \sum_{j=1}^\infty \|u^j\|_{W^{1,n}}^n < \infty\}$ is of the first category in $\HR$.

\begin{cor} \label{Main corollary}
The Jacobian operator does not map $\W$ onto $\HR$ for any $n \ge 2$.
\end{cor}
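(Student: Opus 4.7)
The plan is to deduce the corollary from Theorem \ref{Main theorem} by a short Baire category argument, with the $n$-homogeneity of $J$ playing the key role. The non-linearity of $J$ forbids a direct appeal to the open mapping theorem, so the homogeneity is the ingredient that turns surjectivity into membership in the set of Theorem \ref{Main theorem}.

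First I would argue by contradiction: suppose that $J \colon \W \to \HR$ is surjective, and denote by $S$ the set
\[
S \defeq \left\{ \sum_{j=1}^\infty \lambda^j J u^j \colon \|u^j\|_{W^{1,n}} \le 1 \text{ for every } j \in \N,\; \sum_{j=1}^\infty |\lambda^j| < \infty \right\}
\]
appearing in Theorem \ref{Main theorem}. Given any $h \in \HR$, the assumption yields $u \in \W$ with $Ju = h$. If $u = 0$, then $h = 0 \in S$ (all terms zero). Otherwise, set $u^1 \defeq u/\|u\|_{W^{1,n}}$ and $\lambda^1 \defeq \|u\|_{W^{1,n}}^n$, and $u^j \defeq 0$, $\lambda^j \defeq 0$ for $j \ge 2$. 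Since $J$ is $n$-homogeneous, $J(tu) = t^n Ju$, so
\[
h = Ju = \|u\|_{W^{1,n}}^n\, J u^1 = \sum_{j=1}^\infty \lambda^j J u^j,
\]
with $\|u^j\|_{W^{1,n}} \le 1$ for all $j$ and $\sum_{j=1}^\infty |\lambda^j| = \lambda^1 < \infty$. Hence $h \in S$, and consequently $\HR \subseteq S$.

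This, however, contradicts Theorem \ref{Main theorem}: since $\HR$ is a Banach space it is of the second category in itself by the Baire category theorem, so it cannot be contained in a set of the first category. The contradiction shows that $J$ is not surjective, completing the proof.

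\textbf{Main obstacle.} There is essentially no obstacle at this stage; the entire difficulty has been absorbed into Theorem \ref{Main theorem}. The only point worth highlighting is the nonlinearity of $J$: rather than invoking open-mapping-type reasoning, one exploits $n$-homogeneity to rewrite a single preimage $Ju$ as a one-term series with an atom of unit $W^{1,n}$-norm, which is precisely the form demanded by the description of $S$.
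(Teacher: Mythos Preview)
Your argument is correct and is exactly the one the paper intends: the sentence preceding the corollary notes that $n$-homogeneity lets one rewrite the set of Theorem \ref{Main theorem} as $\{\sum_{j=1}^\infty J u^j : \sum_{j=1}^\infty \|u^j\|_{W^{1,n}}^n < \infty\}$, and your one-term representation $h = \|u\|_{W^{1,n}}^n\, J(u/\|u\|_{W^{1,n}})$ is precisely how one sees that $J(\W)$ is contained in that first-category set.
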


Most of the further examples of $\mathcal{H}^1$ regular compensated compactness quantities studied in ~\cite{CLMS93} are bilinear operators or null Lagrangians. The former class is treated in Theorem \ref{Bilinear theorem} and \S \ref{Proof of Theorem}--\ref{Applications to bilinear quantities} and the latter in \S \ref{Homogeneous polynomials of partial derivatives}. We next describe the setting of Theorem \ref{Bilinear theorem}.

When $k \ge 0$, $n \ge 2$, $m,N \ge 1$, $1 < p < \infty$, and $A \colon \mathcal{D}'(\R^n,\R^m) \to \mathcal{D}'(\R^n,\R^N)$ is a linear constant-coefficient, homogeneous partial differential operator (so that all the components of $A$ are of the form $\sum_{i=1}^m \sum_{|\alpha|=q} c_{\alpha,i} \partial^\alpha v_i$ with the same $q \in \N$), we denote
\begin{equation} \label{WA}
\begin{array}{lcl}
W_A^{k,p}(\R^n,\R^m) &\defeq& \{v \in W^{k,p}(\R^n,\R^m) \colon A v = 0\}, \\
C_{c,A}^\infty(\R^n,\R^m) &\defeq& \{v \in C_c^\infty(\R^n,\R^m) \colon A v = 0\}.
\end{array}
\end{equation}
Typical examples of $A$ in the Hardy space theory of compensated compactness are divergence, curl and the trivial operator $A = 0$.

Various compensated compactness quantities are bilinear partial differential operators $\B \colon W_{A_1}^{k_1,p_1}(\R^n,\R^{m_1}) \times W_{A_2}^{k_2,p_2}(\R^n,\R^{m_2}) \to L^1(\R^n)$ of the form
\begin{equation} \label{Form of a bilinear operator}
\B(u,v) \defeq \sum_{i=1}^{m_1} \sum_{j=1}^{m_2} \sum_{|\alpha| \le k_1, |\beta| \le k_2} c_{i,j,\alpha,\beta} \partial^\alpha u_i \partial^\beta v_j,
\end{equation}
where
\begin{equation} \label{Conditions on exponents}
k_1, k_2 \ge 0, \; n \ge 2, \; m_1, m_2, N \ge 1, \; p_1,p_2 \in ]1,\infty[ \text{ with  } \frac{1}{p_1} + \frac{1}{p_2} = 1
\end{equation}
(see \textsection \ref{H1 regularity of specific operators} for several examples).

\begin{theorem} \label{Bilinear theorem}
Suppose $\B \colon W^{k_1,p_1}_{A_1}(\R^n,\R^{m_1}) \times W_{A_2}^{k_2,p_2}(\R^n,\R^{m_2}) \to L^1(\R^n)$ is of the form \eqref{Form of a bilinear operator}, where the exponents satisfy \eqref{Conditions on exponents} and $A_1 \in \{0,\dive,\curl\}$. The following conditions are equivalent.
\renewcommand{\labelenumi}{(\roman{enumi})}
\begin{enumerate}

\item For every $(u,v) \in W_{A_1}^{k_1,p_1}(\R^n,\R^{m_1}) \times W_{A_2}^{k_2,p_2}(\R^n,\R^{m_2})$,
\[\int_{\R^n} \B(u,v) = 0.\]

\item For every $(u,v) \in W_{A_1}^{k_1,p_1}(\R^n,\R^{m_1}) \times W_{A_2}^{k_2,p_2}(\R^n,\R^{m_2})$,
\[\|\B(u,v)\|_{\mathcal{H}^1} \lesssim \|u\|_{W^{k_1,p_1}} \|v\|_{W^{k_2,p_2}}.\]
\end{enumerate}

\noindent Whenever $\B$ satisfies conditions $\operatorname{(i)--(ii)}$ and $k_1 \ge 1$, the set
\begin{equation} \label{First category set}
\left\{ \sum_{l=1}^\infty \lambda^l \B(u^l,v^l) \colon \sup_{l \in \N} \|u^l\|_{W^{k_1,p_1}} \|v^l\|_{W^{k_2,p_2}} \le 1, \; \sum_{l=1}^\infty |\lambda^l| < \infty \right\}
\end{equation}
is of the first category in $\HR$.
\end{theorem}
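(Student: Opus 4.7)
The proof of Theorem~\ref{Bilinear theorem} naturally splits into three parts.

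\textbf{Equivalence (i) $\Leftrightarrow$ (ii).} The implication (ii)$\Rightarrow$(i) is immediate since every element of $\HR$ integrates to zero. For (i)$\Rightarrow$(ii), I would first use (i) to rewrite $\B(u,v)$ in divergence form $\B(u,v)=\sum_k\partial_k G_k(u,v)$, with each $G_k$ bilinear in derivatives of $u,v$ of orders at most $k_1,k_2$. The point is that integrating by parts in $\int\B(u,v)=0$ so as to move all derivatives onto the $v$-factor converts (i) into an algebraic condition on the coefficients $c_{i,j,\alpha,\beta}$ which is precisely what is required for the divergence-form rewriting. Combined with $A_1\in\{0,\dive,\curl\}$, where in the nontrivial cases one passes to a potential representation of $u$ (e.g.\ $u=\nabla f$ when $\curl u=0$), $\B(u,v)$ decomposes as a linear combination of classical compensated-compactness bilinear quantities whose $\HR$-regularity is supplied by \cite{CLMS93}; a density argument extends this to $W^{k_1,p_1}_{A_1}\times W^{k_2,p_2}_{A_2}$.

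\textbf{First category reduces to empty interior of $S_1$.} Write $S=\bigcup_{N\in\N}S_N$, where $S_N$ restricts \eqref{First category set} to $\sum_l|\lambda^l|\le N$. Bilinearity of $\B$ gives $S_N=NS_1$, so $S$ is first category if and only if $S_1$ has empty interior. The set $S_1$ is convex, balanced and closed in $\HR$: closedness follows by a diagonal extraction, using the weak compactness of unit balls in the reflexive spaces $W^{k_i,p_i}_{A_i}$ and in $\ell^1$, combined with (ii) and local Sobolev embeddings. It therefore suffices to produce, for each $\delta>0$, an element of $\HR$ of norm below $\delta$ lying outside $S_1$.

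\textbf{Empty interior via scaling.} From the divergence form and H\"older's inequality,
\[
\Bigl|\int\B(u,v)\eta\,dx\Bigr|=\Bigl|\sum_k\int G_k(u,v)\,\partial_k\eta\,dx\Bigr|\lesssim\|u\|_{W^{k_1,p_1}}\|v\|_{W^{k_2,p_2}}\|\nabla\eta\|_{L^\infty}.
\]
Fix a nonzero $h\in\HR$ and an $\eta_0\in C_c^\infty(\R^n)$ with $c:=\int h\eta_0\ne 0$, and for $\epsilon,R>0$ set $h_R(x)=\epsilon R^{-n}h(x/R)$ and $\eta_R(x)=\eta_0(x/R)$. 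Then $\|h_R\|_{\HR}=\epsilon\|h\|_{\HR}$, $\int h_R\eta_R=\epsilon c$, and $\|\nabla\eta_R\|_{L^\infty}=R^{-1}\|\nabla\eta_0\|_{L^\infty}$. Any putative decomposition $h_R=\sum_l\mu^l\B(u^l,v^l)\in S_N$ yields
\[
\epsilon|c|=\Bigl|\int h_R\eta_R\,dx\Bigr|\le\sum_l|\mu^l|\cdot C R^{-1}\le CN/R,
\]
so $h_R\notin S_N$ whenever $R>CN/(\epsilon|c|)$. Choosing $\epsilon<\delta/\|h\|_{\HR}$ and then $R$ correspondingly large produces $h_R\in\HR$ with $\|h_R\|_{\HR}<\delta$ outside $S_N$, completing the empty-interior step. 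The chief technical obstacle is Part~I: verifying that (i) together with $A_1\in\{0,\dive,\curl\}$ yields a divergence form $\B(u,v)=\sum_k\partial_k G_k(u,v)$ with $G_k$ of the correct orders (at most $k_1$ in $u$ and $k_2$ in $v$), so that each $G_k\in L^1$ with bound $\lesssim\|u\|_{W^{k_1,p_1}}\|v\|_{W^{k_2,p_2}}$—the hypothesis $k_1\ge 1$ is used precisely here, to guarantee that there is a derivative on $u$ available to shift. Handling $A_1=\dive,\curl$ via potentials while tracking derivative orders carefully through the algebraic reduction is the main bookkeeping challenge.
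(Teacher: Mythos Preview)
Your overall strategy is sound and Part~2 is a genuinely different (and in some ways cleaner) route from the paper's, but there are a few points where your argument needs tightening.

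\textbf{Part 1.} Your divergence-form claim $\B(u,v)=\sum_k\partial_kG_k$ with $\|G_k\|_{L^1}\lesssim\|u\|_{W^{k_1,p_1}}\|v\|_{W^{k_2,p_2}}$ is the right target but is not obviously true as stated, and citing \cite{CLMS93} is too vague. The paper does not put $\B$ in single-divergence form. Instead it first splits $\B=\sum_l\B_l$ by total order (Lemma~\ref{B decomposition lemma}), then redistributes derivatives so each piece $\B_{l_1,l_2}$ has $|\alpha|=l_1$, $|\beta|=l_2$ in every term (Lemma~\ref{Step lemma}); the redistribution produces a remainder that \emph{is} a sum of terms $\partial_k(\partial^\gamma u_i\,\partial^\theta v_j)$. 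For each $\B_{l_1,l_2}$ the paper uses (i) with $u$ replaced by $\eta_t(x-\cdot)(u-P^{l_1-1}_Qu)$, the Leibniz rule, higher-order Poincar\'e, and the maximal theorem to get the $\HR$ bound directly (Lemma~\ref{Lemma for i to ii}). Your potential-representation step for $A_1\in\{\dive,\curl\}$ matches the paper's \S\ref{The case A1 in div, curl}.

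\textbf{Part 2.} Here your route diverges from the paper. The paper assumes second category, invokes Lemma~\ref{Main lemma} to obtain the norm equivalence $\|b\|_{\BMO}\lesssim\sup\int b\,\B(u,v)/(\|u\|^2+\|v\|^2)$, fixes $b\in\CO$, rescales by $\tau$, and uses Leibniz plus Young's inequality with a parameter $M$ to contradict the scaling of the inhomogeneous Sobolev norm. Your argument bypasses Lemma~\ref{Main lemma} entirely and shows $\overline{S_N}$ has empty interior by testing against dilated $\eta_R$; this is arguably more direct. Two corrections: (a) your closedness claim for $S_1$ is both unnecessary and dubious---$\ell^1$ is not reflexive, and weak convergence of $(u^l_n,v^l_n)$ does not obviously pass through the infinite sum in $\HR$. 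You do not need it: the functional $h\mapsto\int h\,\eta_R$ is continuous, so your bound $|\int h\,\eta_R|\le CN/R$ for $h\in S_N$ extends to $\overline{S_N}$, whence $h_R\notin\overline{S_N}$. (b) Your estimate $|\int\B(u,v)\eta|\lesssim\|u\|\|v\|\|\nabla\eta\|_{L^\infty}$ is likely too strong without the single-divergence form. What you actually get from $\int\B(\eta u,v)=0$ and Leibniz (after the potential step) is $|\int\B(u,v)\eta|\lesssim\|u\|\|v\|\sum_{j=1}^{k_1+1}\|\nabla^j\eta\|_{L^\infty}$; since $\|\nabla^j\eta_R\|_{L^\infty}=R^{-j}\|\nabla^j\eta_0\|_{L^\infty}$, this still gives the $R^{-1}$ decay you need, and your conclusion stands.
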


When $|\alpha|=k_1$ and $|\beta|=k_2$ in all the terms of $\B$, the estimate in condition (ii) can be strengthened to $\|\B(u,v)\|_{\mathcal{H}^1} \lesssim \|\nabla^{k_1} u\|_{L^{p_1}} \|\nabla^{k_2} v\|_{L^{p_2}}$ -- this fact is recorded in Theorem \ref{Homogeneous bilinear theorem}. When $A_1 = A_2 = 0$, we give in Proposition \ref{Coefficient proposition} another condition equivalent to (i) and (ii) that refers directly to the coefficients of $\B$ and is rather easy to test.

This article is organized as follows. In \textsection \ref{Background} we present the necessary background for the article, whereas \textsection \ref{Proof of Theorem A} is devoted to Theorem \ref{Main theorem}. In \textsection \ref{Proof of Theorem} we prove Theorem \ref{Bilinear theorem} and its modification for homogeneous Sobolev spaces, and applications are presented in \S \ref{Applications to bilinear quantities}. Section \ref{Homogeneous polynomials of partial derivatives} deals with null Lagrangians and more general homogeneous polynomials of partial derivatives, and open problems related to Question \ref{CLMS question} are discussed in \textsection \ref{Open questions}.

\section{Background} \label{Background}

\subsection{$\mathcal{H}^1$ regularity of Jacobians} \label{H1 regularity of Jacobians}
Good general references on the function spaces $\HR$ and $\BMO$ include  ~\cite{Gra04} and ~\cite{Ste93}. We fix, for the rest of this article, a function $\eta \in C_c^\infty(\R^n)$ with integral $\int_{\R^n} \eta \neq 0$ and denote $\eta_t(x) \defeq t^{-n} \eta(x/t)$ for all $x \in \R^n$ and $t > 0$. The real-variable \textit{Hardy space} is defined by
\[\HR \defeq \left\{ h \in {\mathcal S}'(\R^n) \colon \sup_{t>0} |h * \eta_t| \in L^1(\R^n) \right\}\]
and endowed with the norm $\|h\|_{\mathcal{H}^1} \defeq \|\sup_{t>0} |h * \eta_t|\|_{L^1}$. As shown by C. Fefferman and E. Stein in ~\cite{FS72}, any two choices of $\eta$ give equivalent norms for $\HR$. By a famous theorem of C. Fefferman, $\BMO$ is the dual of $\HR$. The space $\VMO$ can be defined as the closure of $\CO$ in $\BMO$, and it was shown by R. Coifman and G. Weiss in ~\cite{CW77} that $\HR = \operatorname{VMO}(\R^n)^*$.

S. M\"{u}ller showed in ~\cite{Mul89} that if $u \in W^{1,n}_{loc}(\R^n,\R^n)$ with $J u \ge 0$ a.e. in an open set $\Omega$, then $J u \log(2 + J u) \in L^1_{loc}(\Omega)$, and motivated chiefly by M\"{u}ller's theorem, R. Coifman, P.-L. Lions, Y. Meyer, and S. Semmes proved the following result.

\begin{theorem}[~\cite{CLMS93}] \label{H1 regularity theorem}
If $u = (u_1,\dots,u_n) \in \dot{W}^{1,n}(\R^n, \R^n)$, then $J u$ belongs to $\HR$ and the estimate $\|J u\|_{{\mathcal H}^1} \lesssim \prod_{j=1}^n \Vert \nabla u_j \Vert_{L^n}$ holds.
\end{theorem}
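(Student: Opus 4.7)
The plan is to reduce the theorem to a div-curl lemma and then prove the lemma by combining an integration-by-parts estimate for the radial maximal function with a sub-endpoint application of the Hardy--Littlewood maximal inequality.

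By mollification and continuity of the Jacobian as a map $\WD \to L^1(\R^n)$ (H\"older), it suffices to consider $u \in C_c^\infty(\R^n,\R^n)$. Expanding $\det(\nabla u)$ along the first row yields $Ju = \nabla u_1 \cdot F$, where $F_i = (-1)^{i+1} M_{1i}$ and $M_{1i}$ is the $(1,i)$-minor of $\nabla u$. The Piola identity gives $\dive F = 0$ distributionally, and H\"older's inequality gives $\|F\|_{L^{n/(n-1)}} \le \prod_{j=2}^n \|\nabla u_j\|_{L^n}$. Thus $Ju$ is a div-curl product of an $L^{n/(n-1)}$ divergence-free field and an $L^n$ gradient, and the theorem reduces to the following lemma: for $1 < p < \infty$, $F \in L^p(\R^n,\R^n)$ with $\dive F = 0$, and $g \in \dot W^{1,p'}(\R^n)$, the product $F \cdot \nabla g$ belongs to $\HR$ with $\|F \cdot \nabla g\|_{\mathcal{H}^1} \lesssim \|F\|_{L^p}\|\nabla g\|_{L^{p'}}$.

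To bound the radial maximal function $\sup_{t>0}|(F \cdot \nabla g)*\eta_t|$, I would integrate by parts against $\eta_t(x-\cdot)$ and use $\dive F = 0$ to obtain, for any constant $c$,
\[
(F \cdot \nabla g) * \eta_t(x) = -\int (g(y) - c)\, F(y) \cdot \nabla_y \eta_t(x-y)\, dy.
\]
Taking $c$ to be the mean of $g$ on $B(x,Ct)$, applying H\"older on the support of $\nabla\eta_t$ with conjugate exponents $q,q'$, and invoking the Sobolev--Poincar\'e estimate $\|g - g_B\|_{L^{q'}(B)} \lesssim \|\nabla g\|_{L^r(B)}$ with $1/r = 1/q' + 1/n$ produces, once the relation $1/q + 1/r = 1 + 1/n$ absorbs the scaling factor $t^{-n-1}$,
\[
\sup_{t>0} |(F \cdot \nabla g) * \eta_t(x)| \lesssim [M(|F|^q)(x)]^{1/q}\, [M(|\nabla g|^r)(x)]^{1/r},
\]
where $M$ denotes the Hardy--Littlewood maximal operator.

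The final step is to select $q < p$ and $r < p'$; this is compatible since the surplus $1/q + 1/r - 1 = 1/n$ exceeds $1/p + 1/p' - 1 = 0$. Then $M$ is bounded on $L^{p/q}$ and on $L^{p'/r}$, yielding $\|[M(|F|^q)]^{1/q}\|_{L^p} \lesssim \|F\|_{L^p}$ and the analogous estimate for $\nabla g$, so a final H\"older in the pair $(p,p')$ gives the required $L^1$ bound. The main obstacle is precisely this sub-endpoint choice of exponents: the natural endpoint $q=p$, $r=p'$ is forbidden because $M$ is unbounded on $L^1$, and one must exploit the $1/n$-surplus produced by Sobolev--Poincar\'e to work strictly below the endpoint while still recovering the natural $L^p$ and $L^{p'}$ norms at the end.
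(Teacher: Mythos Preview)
Your proposal is correct and follows the classical Coifman--Lions--Meyer--Semmes argument: reduce the Jacobian to a div-curl product via the Piola identity, then estimate the radial maximal function by integrating by parts, subtracting the mean of $g$, applying Sobolev--Poincar\'e, and choosing sub-endpoint exponents so the Hardy--Littlewood maximal theorem applies. The paper derives the same result as a special case of its general Theorem~\ref{Homogeneous bilinear theorem} (via Corollary~\ref{Prototypical corollary}), whose proof in Lemma~\ref{Lemma for i to ii} is exactly the same mechanism you describe---Leibniz rule in place of your integration by parts, the higher-order Poincar\'e inequality of Lemma~\ref{Higher order Poincare lemma} in place of your Sobolev--Poincar\'e, and the same sub-endpoint trick with $M_{p-\delta}$. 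So the approaches coincide; the paper simply packages the argument at a level of generality that covers arbitrary multilinear operators with vanishing integral, while you work it out directly for the div-curl case.
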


T. Iwaniec and J. Onninen subsequently weakened the hypotheses of Theorem \ref{H1 regularity theorem} to $u \in W^{1,n-1}_{loc}(\R^n,\R^n)$ and $|\text{cof}(\D u)| \in L^{n/(n-1)}(\R^n)$ (and strengthened the conclusion to $\|J u\|_{\mathcal{H}^1} \lesssim \|\text{cof}(\D u)\|_{L^{n/(n-1)}}^{n/
(n-1)}$) in ~\cite{IO02}, answering a question raised by S. M\"{u}ller, T. Qi and B.S. Yan in ~\cite{MQY94}. For a unified way of proving Theorem \ref{H1 regularity theorem} and $\mathcal{H}^1$ regularity results for numerous other quantities, involving e.g. fractional Laplacians, see ~\cite{1609.08547}. We also restate, for further reference, the Jacobian decomposition theorem mentioned in the introduction.

\begin{theorem}[~\cite{CLMS93}] \label{Jacobian decomposition theorem}
For every $h \in \HR$ there exist constans $\lambda^j \in \R$ and mappings $u^j \in \W$ with $\|\D u^j\|_{L^n} \le 1$ such that
\[h = \sum_{j=1}^\infty \lambda^j J u^j \qquad \text{and} \qquad \sum_{j=1}^\infty |\lambda^j| \lesssim \|h\|_{{\mathcal H}^1}.\]
\end{theorem}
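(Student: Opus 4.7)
The plan is to combine the atomic decomposition of $\HR$ with an explicit construction realising each atom as a Jacobian, then assemble the pieces by a single $\ell^1$ sum.

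\textbf{Step 1 (Reduction to atoms).} By the Coifman--Latter atomic decomposition, every $h \in \HR$ can be written as $h = \sum_k \mu^k a^k$ with $\sum_k |\mu^k| \lesssim \|h\|_{\HR}$ and each $a^k$ a real-variable $\HR$-atom: supported in a cube $Q^k$, satisfying $\int a^k = 0$ and $\|a^k\|_{L^\infty} \le |Q^k|^{-1}$. Thus it suffices to show that every such atom equals $J u$ for some $u \in \W$ with $\|\D u\|_{L^n} \le C_n$ depending only on dimension; relabelling $\lambda^k \defeq C_n^n \mu^k$, $\tilde u^k \defeq C_n^{-1} u^k$ and invoking the $n$-homogeneity of $J$ then gives the claimed decomposition with $\|\D \tilde u^k\|_{L^n} \le 1$ and $\sum_k |\lambda^k| \lesssim \|h\|_{\HR}$.

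\textbf{Step 2 (Realising an atom as a Jacobian).} After translating and rescaling via $u(x) \mapsto t^{-1} u(tx)$, a symmetry that preserves both $\|\D u\|_{L^n}$ and the atomic normalisation of $J u$, reduce to the case $Q = [0,1]^n$. Since $\int a = 0$, a Bogovskii-type right inverse to the divergence produces $F \in L^n(\R^n,\R^n)$ compactly supported in a slight enlargement of $Q$ with $\dive F = a$ and $\|F\|_{L^n} \le C_n$. Using the null-Lagrangian identity $J u = \partial_j(u_i \operatorname{cof}(\D u)_{ij})$ (implicit summation), seek $u$ as a perturbation of a fixed template — for instance $u_i(x) = x_i \psi(x)$ for $i < n$ with $\psi \in C_c^\infty$ equal to $1$ on a neighbourhood of $Q$, which makes the cofactor matrix essentially the identity there and collapses the Jacobian on $Q$ to $\partial_n u_n$ — and close the construction by a contraction mapping, exploiting that the higher-order terms in $J u$ are quadratic in the smallness parameter while the linear part already matches the atom $a$ (up to a divergence absorbable into the first $n-1$ components).

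\textbf{Main obstacle.} Step 2 is the only substantive difficulty. While the divergence equation is routine, inverting $J$ is genuinely nonlinear: the Jacobian is an $n$-homogeneous polynomial in $\D u$ with no linear right inverse, so the quantitative bound $\|\D u\|_{L^n}^n \le C_n$ must be extracted by a careful choice of coordinates together with a fixed-point iteration. A tempting alternative is to set up the dual estimate
\[
  \|\phi\|_{\BMO} \lesssim \sup\Bigl\{ \Bigl| \int_{\R^n} \phi\, J u \Bigr| : u \in \W,\; \|\D u\|_{L^n} \le 1 \Bigr\} \qquad (\phi \in \BMO)
\]
and conclude via Fefferman--Stein duality and the open mapping theorem; however, verifying this inequality seems to require essentially the same atom-level construction, so the nonlinear Jacobian inversion is hard to avoid.
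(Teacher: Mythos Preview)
Your Step~2 contains a genuine gap. The sketch ``close by a contraction mapping, exploiting that the higher-order terms are quadratic in the smallness parameter'' does not work as written: you have not identified any smallness parameter, and after the rescaling to $Q=[0,1]^n$ there is none---the atom satisfies $\|a\|_{L^\infty}\le 1$, which is not small. With the template $u_i=x_i\psi$ for $i<n$, on $Q$ one has $Ju=\partial_n u_n$, so you would need $u_n(x)=\int_0^{x_n} a(x',t)\,dt$; but then the transverse derivatives $\partial_j u_n$, $j<n$, are not controlled (indeed not even defined) for a merely $L^\infty$ atom, and outside $Q$ the transition layer produces $O(1)$ Jacobian errors that cannot be absorbed perturbatively. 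Solving $Ju=a$ with the sharp bound $\|\D u\|_{L^n}\le C_n$ for an arbitrary sign-changing $L^\infty$ atom is a hard nonlinear problem; standard Dacorogna--Moser type theory does not deliver this.

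More importantly, your last paragraph has the situation backwards. The paper (following \cite{CLMS93}) proves the theorem precisely via the dual route you dismiss: one establishes the norm equivalence
\[
\|b\|_{\BMO}\approx\sup\Bigl\{\int_{\R^n} b\,Ju: \|\D u\|_{L^n}\le 1\Bigr\}
\]
and then applies the abstract implication $(\mathrm{i})\Rightarrow(\mathrm{iii})$ of Lemma~\ref{Main lemma}. The nontrivial lower bound in this equivalence is \emph{not} obtained by inverting the Jacobian on atoms; it comes from singular integral theory---in the planar case, formula \eqref{AIM formula} identifies $\int_{\C} b\,Ju$ with $\int_{\C} f\,\overline{K_b f}$, and the Coifman--Rochberg--Weiss commutator theorem gives $\|K_b\|_{L^2\to L^2}\approx\|b\|_{\BMO}$ (see \eqref{Definition of BMOS norm} and the remark following it). Analogous commutator arguments work in higher dimensions. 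So the dual approach genuinely bypasses the nonlinear inversion problem, and that is why it is the method of proof in \cite{CLMS93}.
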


It would be impossible to cite here all the applications of Theorem \ref{H1 regularity theorem}, its variants, and the $\mathcal{H}^1$ regularity of other compensated compactness quantities, but we mention some examples. The higher regularity of Jacobians was first observed by H. Wente in his study of surfaces of constant mean curvature (see ~\cite{Wen69}), and Theorem \ref{H1 regularity theorem} implies an improvement of an elliptic regularity result from ~\cite{Wen69} (see ~\cite[p. 283]{CLMS93} or ~\cite[Theorem 3.4.1]{Hel02}). Other areas of application include, among others, weakly harmonic maps between manifolds (~\cite{GM12}, \cite{Hel02}), geometric analysis in $\R^n$ (~\cite{AIM09}, ~\cite{IM01}), and mathematical fluid dynamics (~\cite{Lio96}) -- the books cited above also contain lots of further references.

\subsection{Sobolev spaces}
We recall some properties of Sobolev spaces used in this article. For an introduction to homogeneous Sobolev spaces we refer to ~\cite{KS91} and ~\cite{SSN98}. When $n,m,k \in \N$ and $1 < p < \infty$, we equip the \textit{homogeneous Sobolev space} $\dot{W}^{k,p}(\R^n,\R^m) \defeq \{u \in L^1_{loc}(\R^n,\R^m) \colon \nabla^k u \in L^p(\R^n,\R^{knm})\}$ with the seminorm
\[\|u\|_{\dot{W}^{k,p}} \defeq \|\nabla^k u\|_{L^p} \defeq \left( \sum_{i=1}^m \sum_{|\alpha|=k} \int_{\R^n} |\partial^\alpha u_i|^p \right)^\frac{1}{p},\]
and $\dot{W}^{k,p}(\R^n,\R^m)$ is complete in $\|\cdot\|_{\dot{W}^{k,p}}$ (see ~\cite[p. 188]{SSN98}). In order to define, as in \eqref{WA},
\[\dot{W}_A^{k,p}(\R^n,\R^m) \defeq \{v \in \dot{W}^{k,p}(\R^n,\R^m) \colon A v = 0\}\]
without ambiguities we \textit{do not} identify polynomials of degree smaller than $k$. Smooth, compactly supported functions are dense in $\dot{W}^{k,p}(\R^n,\R^m)$ (see ~\cite[pp. 194--195]{SSN98}). Furthermore, $\dot{W}^{k,p}(\R^n,\R^m) \subset W^{k,p}_{loc}(\R^n,\R^m)$ (see ~\cite[Theorem 4.5.8]{Hor90}). Finally, we write $u^j \rightharpoonup u$ in $\dot{W}^{k,p}(\R^n,\R^m)$ when $\nabla^k u^j \rightharpoonup \nabla^k u$ in $L^p(\R^n,\R^{kn})$.

We will repeatedly use a higher-order Poincar\'{e} inequality which is presented in the following lemma. A proof can be found e.g. at ~\cite[pp. 2960-2961]{PT15}.

\begin{lemma} \label{Higher order Poincare lemma}
Let $Q \subset \R^n$ be a cube and suppose $f \in W^{k,p}(Q)$, where $k \in \N$ and $1 \le p < n$. Then there exists a polynomial $P^{k-1}_Q f$ of order at most $k-1$ such that
\[\Aint_Q \partial^\alpha P^{k-1}_Q f = \Aint_Q \partial^\alpha f\]
for every multi-index $\alpha \in (\N \cup \{0\})^n$ with $|\alpha| \le k-1$. Furthermore,
\begin{equation} \label{Poincare 1}
\left( \Aint_Q |f-P^{k-1}_Q f|^\frac{np}{n-kp} \right)^\frac{n-kp}{np} \lesssim \ds l(Q)^k \left( \Aint_Q |\nabla^k f|^p \right)^\frac{1}{p} \quad \text{ if } kp < n,
\end{equation}
\begin{equation} \label{Poincare 2}
\left( \Aint_Q |f-P^{k-1}_Q f|^q \right)^\frac{1}{q} \lesssim \ds l(Q)^k \left( \Aint_Q |\nabla^k f|^p \right)^\frac{1}{p} \quad \text{ if } kp \ge n \text{ and } q < \infty,
\end{equation}
where the implicit constants do not depend on $Q$.
\end{lemma}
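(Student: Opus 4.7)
The plan is to obtain $P_Q^{k-1} f$ by an explicit inductive construction and then to derive the estimates by iterating the standard first-order Sobolev--Poincar\'e inequality. Let $x_Q$ denote the center of $Q$ and seek
\[
P_Q^{k-1} f(x) = \sum_{|\alpha| \le k-1} \frac{c_\alpha}{\alpha!} (x - x_Q)^\alpha.
\]
For a multi-index $\beta$ with $|\beta| \le k-1$, the derivative $\partial^\beta P_Q^{k-1} f$ is a polynomial in $(x-x_Q)^\gamma$ with coefficients involving $c_{\beta+\gamma}$, and since $Q$ is symmetric about $x_Q$, the monomial $(x-x_Q)^\gamma$ has zero average over $Q$ unless every component of $\gamma$ is even. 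Thus $\Aint_Q \partial^\beta P_Q^{k-1} f$ equals $c_\beta$ plus a linear combination of $c_{\beta+\gamma}$ with $|\gamma| \ge 2$. One then solves the resulting system recursively, starting from $|\beta| = k-1$ (where the requirement reduces to $c_\beta = \Aint_Q \partial^\beta f$) and descending to $|\beta| = 0$, at each stage matching $\Aint_Q \partial^\beta f$ with the polynomial average.

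Having constructed the polynomial, let $g \defeq f - P_Q^{k-1} f$, so that $\Aint_Q \partial^\alpha g = 0$ for every $|\alpha| \le k-1$ and $\nabla^k g = \nabla^k f$. The plan is now to iterate the classical averaged Sobolev--Poincar\'e inequality
\[
\left( \Aint_Q |h - (h)_Q|^{p^*} \right)^{1/p^*} \lesssim l(Q) \left( \Aint_Q |\nabla h|^p \right)^{1/p}, \qquad p^* = \frac{np}{n-p},
\]
valid for $1 \le p < n$. Applied successively to $|\nabla^{k-1} g|, |\nabla^{k-2}g|, \dots, g$ (each of which has zero average by the construction of $P_Q^{k-1} f$), the exponents evolve according to $1/p_{j-1} = 1/p_j - 1/n$ starting from $p_k = p$, giving $1/p_{k-j} = 1/p - j/n$. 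The iteration can be carried through all $k$ steps precisely when $1/p - k/n > 0$, i.e.\ $kp < n$, and yields
\[
\left( \Aint_Q |g|^{np/(n-kp)} \right)^{(n-kp)/(np)} \lesssim l(Q)^k \left( \Aint_Q |\nabla^k g|^p \right)^{1/p},
\]
which is \eqref{Poincare 1} since $\nabla^k g = \nabla^k f$. The scale-invariant factor $l(Q)^k$ comes out automatically from the $k$ applications of the first-order inequality (each contributing one power of $l(Q)$ once passed to averaged form).

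For the case $kp \ge n$, the plan is to iterate the first-order inequality only up to the largest $j < k$ with $jp < n$, reducing the problem to bounding an $L^q$ average of $\nabla^{k-j} g$ for some fixed $q$. If $jp = n$ one then uses the standard embedding $W^{1,p}(Q) \hookrightarrow L^q(Q)$ for every $q < \infty$ (applied to $\nabla^{k-j-1} g$), and if some intermediate step already has $jp > n$ one invokes Morrey's embedding to obtain an $L^\infty$ and hence $L^q$ bound. In either subcase the chain of estimates again collapses, by the vanishing averages and $\nabla^k g = \nabla^k f$, into \eqref{Poincare 2}. The main technical point to watch is to keep every inequality in its averaged, scale-invariant form so that the dependence on $Q$ is tracked solely through the prefactor $l(Q)^k$; no step is genuinely hard, but the bookkeeping of exponents and the initial construction of $P_Q^{k-1} f$ require some care.
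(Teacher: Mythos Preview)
Your proposal is correct. The paper does not actually give its own proof of this lemma; it simply cites \cite{PT15} (Prats--Tolsa), remarking that the version stated there has $L^p$ on the left-hand side but that the same argument yields the sharper exponents in \eqref{Poincare 1}--\eqref{Poincare 2}. Your direct approach --- constructing $P_Q^{k-1}f$ by matching derivative averages via a triangular linear system (using the parity of monomials on a symmetric cube) and then iterating the first-order Sobolev--Poincar\'e inequality --- is one of the standard self-contained routes to this result, and in that sense you supply what the paper only references.

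One small phrasing issue: when you write ``applied successively to $|\nabla^{k-1} g|, |\nabla^{k-2} g|, \dots$ (each of which has zero average)'', note that it is each \emph{component} $\partial^\alpha g$, $|\alpha|\le k-1$, that has zero mean by your construction, not the norm $|\nabla^j g|$. The iteration should therefore be run componentwise (apply Sobolev--Poincar\'e to each $\partial^\alpha g$ with $|\alpha|=j$, then sum). This is purely a matter of wording; the argument itself is sound.
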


In ~\cite{PT15}, M. Prats and X. Tolsa state a modified version of inequalities \eqref{Poincare 1} and \eqref{Poincare 2}, namely $\|f-P^{k-1}_Q f\|_{L^p(Q)} \lesssim l(Q)^k \|\nabla^k f\|_{L^p(Q)}$, but their proof also gives \eqref{Poincare 1} and \eqref{Poincare 2}.

\subsection{Complex analytical tools}
We recall some tools of planar harmonic analysis and refer to ~\cite{AIM09} for the details, see also ~\cite{Lin15}. The homogeneous Sobolev space $\dot{W}^{1,2}(\C,\C)$ is, up to identification of constant functions to zero, isomorphic to $\LL$, and an isomorphism is given by either of the \textit{Wirtinger derivatives}
\[u_z \defeq \frac{1}{2} \left( \frac{\partial}{\partial x} - i \frac{\partial}{\partial_y} \right) (u_1+iu_2) \quad \text{and} \quad u_{\bar{z}} \defeq \frac{1}{2} \left( \frac{\partial}{\partial x} + i \frac{\partial}{\partial_y} \right) (u_1+iu_2).\]
The \textit{Beurling transform} $\SSS \colon \LL \to \LL$ is given by the Cauchy principal value integral
\begin{equation} \label{Formula of the Beurling transform}
\SSS f(z) \defeq -\frac{1}{\pi} \lim_{\epsilon \searrow 0} \int_{\C \setminus B(0,\epsilon)} \frac{f(w)}{(z-w)^2} \, dw
\end{equation}
which converges a.e. when $f \in L^2(\C,\C)$. The Beurling transform is an isometry in $L^2(\C,\C)$, and $\SSS(u_{\bar{z}}) = u_z$ for all $u \in \dot{W}^{1,2}(\C,\C)$. Thus, when $u \in \dot{W}^{1,2}(\C,\C)$ and $f \defeq u_{\bar{z}} \in \LL$, we may write
\begin{equation} \label{Alternative representations}
J u = |u_z|^2 - |u_{\bar{z}}|^2 = |\SSS f|^2 - |f|^2.
\end{equation}

When $b \in \text{BMO}(\C)$, the complex linear operator $K_b \colon \LL \to \LL$,
\[K_b f \defeq \overline{(\SSS b - b \SSS) \overline{\SSS f}},\]
defined and studied in ~\cite{Lin15}, is self-adjoint. It allows us to rewrite a formula from ~\cite[p. 547]{AIM09} in the form
\begin{equation}  \label{AIM formula}
\int_\C b (|\SSS f|^2 - |f|^2) = \int_\C f \overline{K_b f}.
\end{equation}
By combining \eqref{Alternative representations} and \eqref{AIM formula}, the dual pairing of $b$ and $J u$ is expressed as an $L^2$ inner product. Furthermore, since $K_b$ is self-adjoint, its operator norm and numerical radius coincide, which implies the following characterization:
\begin{equation} \label{Definition of BMOS norm}
\|b\|_{\text{BMO}_\SSS} \defeq \sup_{\int_{\C} |u_{\bar{z}}|^2 = 1} \int_{\C} b J u = \sup_{\|f\|_{L^2}=1} \int_\C f \overline{K_b f} = \|K_b\|_{L^2 \to L^2}.
\end{equation}
The norm $\|\cdot\|_{\text{BMO}_\SSS}$ appeared, in a different guise, already in ~\cite{CLMS93} where it was shown to be equivalent to $\|\cdot\|_{\text{BMO}}$.

When $1 < p < \infty$, the results of this subsection have natural analogues for $L^p(\C,\C)$. In particular, \eqref{AIM formula} holds for every $b \in L^{p'}(\C)$ and $f \in L^{2p}(\C,\C)$.

\subsection{Null Lagrangians} \label{Null Lagrangians}
Jacobian, Hessian and minors of the Jacobian matrix belong to the class of null Lagrangians. In this article we confine ourselves to the null Lagrangians most relevant for compensated compactness theory, those of the form $\LLL(\nabla^k u)$. For further information on null Lagrangians we refer to ~\cite{BCO81}, \cite{Iwa07}, ~\cite{OS88}, and ~\cite{Olv93}.

Here and throughout the article we have striven, whenever possible, to use upper indices to enumerate elements of finite and infinite sequences and lower indices to enumerate components of multi-indices and mappings. We follow ~\cite{BCO81} but try to keep the notation consistent with the other parts of this article. In the rest of this subsection, $n,m,k \in \N$.

\begin{definition}
We denote by $X(n,m,k)$ the $m \times {n+k-1 \choose k}$-dimensional space of real matrices $V = [V_{i, \alpha}]$, where $1 \le i \le m$ and $\alpha \in (\N \cup \{0\})^n$ with $|\alpha|=k$. 
\end{definition}

The ordering of the components of elements of $X(n,m,k)$ is irrelevant for the arguments used in this article. When $u \in W^{k,r}(\R^n,\R^m)$, where $r \in ]1,\infty[$, we denote $\nabla^k u \defeq [\partial^\alpha u_i] \colon \R^n \to X(n,m,k)$.

\begin{definition}
A continuous function $\LLL \colon X(n,m,k) \to \R$ is called a \textit{null Lagrangian} if
\[\int_\Omega \LLL(\nabla^k(u+\varphi)) = \int_\Omega \LLL(\nabla^k u)\]
for every bounded open set $\Omega \subset \R^n$ and every $u \in C^k(\bar{\Omega})$ and $\varphi \in C_0^\infty(\Omega)$.
\end{definition}

Null Lagrangians are also called \textit{quasiaffine functions} in the literature. The following result contains part of ~\cite[Theorem 3.4]{BCO81}; condition (iii) is presented in a slightly different form in ~\cite{BCO81}.

\begin{theorem}[~\cite{BCO81}] \label{Ball-Currie-Olver theorem}
Let $\LLL \colon X(n,m,k) \to \R$ be continuous. The following conditions are equivalent:

\renewcommand{\labelenumi}{(\roman{enumi})}
\begin{enumerate}
\item $\LLL$ is a null Lagrangian.

\item $\int_\Omega \LLL(c+\nabla^k \varphi) = \int_\Omega \LLL(c)$ for every bounded open set $\Omega \subset \R^n$ and all $\varphi \in C_c^\infty(\Omega,\R^m)$ and $c \in X(n,m,k)$.

\item $\LLL$ is a polynomial (of degree $r$, say) and furthermore, whenever $u^j \rightharpoonup u$ in $\dot{W}^{k,r}(\R^n,\R^m)$, we have $\LLL(\nabla^k u^j) \to \LLL(\nabla^k u)$ in $\mathcal{D}'(\R^n)$.
\end{enumerate}
\end{theorem}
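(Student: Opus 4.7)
\emph{Proof plan.} I would establish $(i) \Leftrightarrow (ii)$ directly, $(iii) \Rightarrow (ii)$ via a homogenization argument, and $(i) \Rightarrow (iii)$ by invoking the structural theorem for null Lagrangians from \cite{BCO81}.

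The implication $(i) \Rightarrow (ii)$ is immediate: given $c \in X(n,m,k)$, the polynomial $u$ with components $u_i(x) \defeq \sum_{|\alpha|=k} c_{i,\alpha} x^\alpha/\alpha!$ satisfies $\nabla^k u \equiv c$ on $\R^n$, so applying (i) to this $u$ on any bounded open $\Omega$ yields (ii). For the reverse direction I would first approximate $u \in C^k(\bar \Omega)$ by polynomials in the $C^k(\bar \Omega)$-norm (iterated Weierstrass on $\nabla^k u$ and antiderivatives); by uniform continuity of $\LLL$ on bounded subsets of $X(n,m,k)$, matters reduce to the case that $u$ is itself a polynomial. I would then partition $\Omega$ into small cubes on which the polynomial $\nabla^k u$ is nearly constant and apply (ii) cube-by-cube, with a partition-of-unity device to address the fact that $\varphi$ need not be compactly supported in individual cubes.

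For $(iii) \Rightarrow (ii)$, fix $c \in X(n,m,k)$, $\varphi \in C_c^\infty(\Omega,\R^m)$, and a cube $Q \supset \supp \varphi$, and let $\tilde \varphi$ be the $Q$-periodic extension of $\varphi|_Q$ (which is smooth because $\varphi$ and all its derivatives vanish on $\partial Q$). Choose a cutoff $\chi \in C_c^\infty(\R^n)$ equal to $1$ on a neighborhood of $\Omega \cup Q$, set $P_i(x) \defeq \sum_{|\alpha|=k} c_{i,\alpha} x^\alpha/\alpha!$ and $P \defeq (P_1,\ldots,P_m)$, and define
\[ u(x) \defeq \chi(x) P(x), \qquad u^j(x) \defeq u(x) + \chi(x)\, j^{-k}\, \tilde \varphi(jx). \]
Then $u, u^j \in C_c^\infty(\R^n,\R^m) \subset \dot W^{k,r}(\R^n,\R^m)$, and on $\Omega$ one has $\nabla^k u \equiv c$ and $\nabla^k u^j \equiv c + (\nabla^k \tilde \varphi)(j\,\cdot)$. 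Since $\Aint_Q \nabla^k \tilde \varphi = \Aint_Q \nabla^k \varphi = 0$ by integration by parts (using $\supp \varphi \subset Q$), the periodic rescaling $(\nabla^k \tilde \varphi)(j\,\cdot)$ tends weakly to zero in $L^r_{\mathrm{loc}}$, so $u^j \rightharpoonup u$ in $\dot W^{k,r}(\R^n,\R^m)$. Hypothesis (iii) then gives $\LLL(\nabla^k u^j) \to \LLL(c)$ in $\mathcal D'(\R^n)$, while the classical weak-$*$ $L^\infty$ limit of periodic rescalings yields $\LLL(\nabla^k u^j) \rightharpoonup \Aint_Q \LLL(c + \nabla^k \varphi)$ on $\Omega$. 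Equating the two limits gives $\Aint_Q \LLL(c + \nabla^k \varphi) = \LLL(c)$, and (ii) follows because $\nabla^k \varphi \equiv 0$ outside $\supp \varphi \subset Q$.

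For $(i) \Rightarrow (iii)$---the substantive implication---I would import the structural content of \cite[Theorem 3.4]{BCO81}: every null Lagrangian on $X(n,m,k)$ is an affine combination of higher-order Jacobi minors of $\nabla^k u$, in particular a polynomial of some degree $r$, and each constituent minor admits a divergence representation in terms of polynomial expressions in derivatives of $u$ of order $\le k$. Granting this representation, the weak sequential continuity of $\LLL(\nabla^k u^j)$ under $u^j \rightharpoonup u$ in $\dot W^{k,r}(\R^n,\R^m)$ follows by testing against $\psi \in C_c^\infty(\R^n)$, integrating by parts to shift one derivative onto $\psi$, and invoking Rellich--Kondrachov compactness for the remaining factors involving strictly lower-order derivatives. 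The principal obstacle is precisely this polynomial and divergence characterization of null Lagrangians, which is the nontrivial content of \cite{BCO81}; once it is in hand, the only additional check is that our formulation of (iii) matches theirs, a routine consequence of the polynomial degree bound on $\LLL$ and the density of smooth compactly supported functions in $\dot W^{k,r}$.
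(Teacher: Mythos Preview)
The paper does not supply its own proof of this statement: Theorem~\ref{Ball-Currie-Olver theorem} is quoted from \cite{BCO81} (with the remark that condition (iii) is reformulated), and no argument is given in the present article. There is therefore no ``paper's own proof'' to compare your proposal against.

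On the substance of your sketch: the implications $(i)\Rightarrow(ii)$, $(iii)\Rightarrow(ii)$, and $(i)\Rightarrow(iii)$ are handled reasonably---the homogenization argument for $(iii)\Rightarrow(ii)$ is essentially the same idea the paper later uses in the proof of $(iii)\Rightarrow(i)$ in Corollary~\ref{Compensated integrability corollary}, and for $(i)\Rightarrow(iii)$ you correctly identify that the real work is the structural characterization of null Lagrangians as affine combinations of minors, which you import from \cite{BCO81}. The weak link is your $(ii)\Rightarrow(i)$: approximating $u$ by polynomials does not reduce to the constant-$\nabla^k u$ case (a polynomial of degree $>k$ still has nonconstant $k$th gradient), and the ``partition-of-unity device'' you allude to for the cube decomposition is genuinely delicate, since cutting off $\varphi$ in a small cube alters $\nabla^k\varphi$ by boundary terms of order $\ell(Q)^{-k}$ that do not vanish in the limit. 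The standard route in \cite{BCO81} is instead to differentiate the identity in (ii) with respect to $\varphi$ to obtain that the Euler--Lagrange operator of $\LLL$ vanishes identically at every $c$, which is one of several equivalent characterizations of null Lagrangians; from there (i) follows. If you want a self-contained argument for $(ii)\Rightarrow(i)$, that is the direction to pursue rather than cube decomposition.
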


Another theorem from ~\cite{BCO81} says that $\LLL$ is a null Lagrangian if and only if $\LLL$ is an affine combination of minors of $\nabla^k u$. This characterization implies the following result which is also a special case of Theorem \ref{Extension to homogeneous polynomials}.

\begin{prop} \label{H1 regularity of null Lagrangians}
Suppose that a null Lagrangian $\LLL \colon X(n,m,k) \to \R$ is a polynomial of degree $r \ge 2$. The inequality $\|\LLL(\nabla^k u)\|_{\mathcal{H}^1} \lesssim \|\nabla^k u\|_{L^r}^r$ holds for all $u \in \dot{W}^{k,r}(\R^n,\R^m)$ if and only if $\LLL$ is $r$-homogeneous.
\end{prop}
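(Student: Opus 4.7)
The plan is to handle the two directions separately: a scaling argument for the ``only if'' implication, and the Ball--Currie--Olver characterization combined with the $\mathcal{H}^1$ regularity of minors for the ``if'' implication.

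For the ``only if'' direction I would decompose $\LLL = \sum_{j=0}^r \LLL_j$ into its $j$-homogeneous parts and test the hypothesized inequality against the dilates $u_\lambda(x) \defeq u(\lambda x)$ of a fixed $u \in C_c^\infty(\R^n,\R^m)$. Using the scaling identities $\nabla^k u_\lambda(x) = \lambda^k (\nabla^k u)(\lambda x)$, $\|f(\lambda \cdot)\|_{\mathcal{H}^1} = \lambda^{-n}\|f\|_{\mathcal{H}^1}$, and $\|\nabla^k u_\lambda\|_{L^r}^r = \lambda^{kr-n}\|\nabla^k u\|_{L^r}^r$, the assumed bound rearranges to
\[\Bigl\| \sum_{j=0}^r \lambda^{jk} \LLL_j(\nabla^k u) \Bigr\|_{\mathcal{H}^1} \lesssim \lambda^{kr} \|\nabla^k u\|_{L^r}^r \qquad \text{for all } \lambda > 0.\]
Pairing against an arbitrary $\varphi \in \CO$ via the $\mathcal{H}^1$--$\BMO$ duality converts the left-hand side into a polynomial in $\lambda$ that is $O(\lambda^{kr})$ as $\lambda \to 0^+$; this forces its coefficients at $\lambda^{jk}$ for $j < r$ to vanish. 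Varying $\varphi$ yields $\LLL_j(\nabla^k u) = 0$ a.e.\ for $j < r$ and every $u \in C_c^\infty$. Choosing $u$ to coincide near a given point $x_0$ with a polynomial of degree $k$ prescribing any value in $X(n,m,k)$ as $\nabla^k u(x_0)$, the continuity of the polynomial $\LLL_j$ then gives $\LLL_j \equiv 0$ on $X(n,m,k)$ for each $j < r$, so $\LLL = \LLL_r$ is $r$-homogeneous.

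For the ``if'' direction, Ball--Currie--Olver (recalled in \textsection \ref{Null Lagrangians}) expresses every null Lagrangian as an affine combination of minors of $\nabla^k u$. Since each $s \times s$ minor is $s$-homogeneous in the entries $\partial^\alpha u_i$, uniqueness of the homogeneous decomposition forces an $r$-homogeneous null Lagrangian to be a linear combination of $r \times r$ minors alone. By linearity it then suffices to establish the $\mathcal{H}^1$ bound for a single such minor, which is exactly the content of Theorem \ref{Extension to homogeneous polynomials} applied to the present setting; the case $k=1$, $m=r=n$ is already contained in Theorem \ref{H1 regularity theorem}.

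I expect the main obstacle to lie in the ``if'' direction: deriving the $\mathcal{H}^1$ estimate for an arbitrary $r \times r$ minor of $\nabla^k u$ when $k > 1$, or when the minor is of size strictly smaller than $n$, does not reduce directly to Theorem \ref{H1 regularity theorem}. One needs a Piola-type identity to rewrite the minor as a divergence of a vector field with dual integrability, whereupon an iterative div--curl argument in the spirit of~\cite{CLMS93} supplies the bound. The scaling step of the ``only if'' direction, by contrast, is essentially bookkeeping once the homogeneous decomposition of $\LLL$ is in hand.
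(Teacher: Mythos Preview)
Your proposal is correct and follows the paper's own route: the paper states that Proposition~\ref{H1 regularity of null Lagrangians} follows from the Ball--Currie--Olver minor characterization and is a special case of Theorem~\ref{Extension to homogeneous polynomials}, and your scaling argument for the ``only if'' direction is precisely the ``standard scaling argument'' alluded to in the proof of that theorem. One simplification: the detour through individual minors is unnecessary, since Theorem~\ref{Extension to homogeneous polynomials} applies directly to $\LLL$ itself (an $r$-homogeneous null Lagrangian has $\LLL(0)=0$, whence $\int_{\R^n}\LLL(\nabla^k\varphi)=0$ for all $\varphi\in C_c^\infty(\R^n,\R^m)$ by the null Lagrangian property), so your anticipated ``main obstacle'' never materializes and the Piola/div--curl discussion is superfluous.
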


In \textsection \ref{Homogeneous polynomials of partial derivatives} we give an example of an homogeneous polynomial $\LLL \colon X(3,2,2) \to \R$ that satisfies the estimate $\|\LLL(\nabla^2 u)\|_{\mathcal{H}^1} \lesssim \|\nabla^2 u\|_{L^3}^3$ for all $u \in \dot{W}^{2,3}(\R^3,\R^2)$ but fails to be be a null Lagrangian.

\subsection{Potentials for divergence and curl} \label{Potential for divergence and curl}

It is the purpose of this subsection to find, given $k \in \N \cup \{0\}$ and $1 < p < \infty$, a first-order homogeneous partial differential operator $B \colon \dot{W}^{k+1,p}(\R^n,\R^{n(n-1)/2}) \to \dot{W}^{k,p}_{\dive}(\R^n,\R^n)$ such that every $u \in \dot{W}^{k,p}_{\dive}(\R^n,\R^n)$ can be written as $u = B \Phi$, where $\|\nabla^{k+1} \Phi\|_{L^p} \approx \|\nabla^k u\|_{L^p}$. The result is in effect a Poincar\'{e} lemma with suitable norm control: given $u \in \dot{W}^{k,p}_{\dive}(\R^n,\R^n)$ the idea is to define the closed $n-1$ form $\alpha \defeq * (\sum_{j=1}^n u_j \, dx_j)$ and then find a suitable $n-2$ form $\beta$ with $d \beta = \alpha$. We will however avoid the use of differential forms in the exposition. We also present analogous results for curl.

\vspace{0.2cm}
We denote the vector space of strictly upper triangular $n \times n$ matrices by $\R^{n \times n}_\Delta$.

\begin{definition}
We define $C \colon \mathcal{D}'(\R^n,\R^n) \to \mathcal{D}'(\R^n,\R^{n \times n}_\Delta)$ by
\[C(T) \defeq [(-1)^{i+j} \chi_{i<j}(i,j) \, (\partial_j T_i - \partial_i T_j)]_{i,j=1}^n\]
and $B \colon \mathcal{D}'(\R^n,\R^{n \times n}_\Delta) \to \mathcal{D}'_{\dive}(\R^n,\R^n)$ by
\[B(\Phi)
\defeq \prod_{j=1}^n \left( \sum_{i < j} (-1)^{i+j} \partial_i \Phi_{ij} + \sum_{i>j} (-1)^{i+j+1} \partial_i \Phi_{ji} \right).\]
\end{definition}
A straightforward computation gives the identity $-\Delta T = B \circ C(T)$ for all divergence-free $T \in \mathcal{D}'(\R^n,\R^n)$. In order to use the identity we prove the following lemma.

\begin{lemma} \label{Poisson equation lemma}
Let $k \in \N \cup \{0\}$ and $1 < q < n$. If $u \in L^q_{\dive}(\R^n,\R^n)$, then the Poisson equation $-\Delta T = u$ has a solution $T \in \dot{W}^{2,q}_{\dive}(\R^n,\R^n) \cap \dot{W}^{1,nq/(n-q)}(\R^n,\R^n)$ that is unique up to an additive constant.
\end{lemma}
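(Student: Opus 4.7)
The plan is to construct $T$ by convolving each component of $u$ with the fundamental solution of $-\Delta$, and then read off the required integrability, divergence-freeness, and uniqueness from standard harmonic analysis.

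For existence and regularity I would work at the level of $\nabla T$: formally $\partial_i T_j$ has Fourier symbol proportional to $\xi_i |\xi|^{-2} \widehat{u_j}(\xi)$, which I would realize as a composition of a Riesz transform with the Riesz potential of order $1$ applied to $u_j$. Since $1 < q < n$, the Hardy--Littlewood--Sobolev inequality gives $\|I_1 u_j\|_{L^{nq/(n-q)}} \lesssim \|u_j\|_{L^q}$, and the $L^{nq/(n-q)}$-boundedness of the Riesz transforms yields
\[\|\nabla T\|_{L^{nq/(n-q)}} \lesssim \|u\|_{L^q}.\]
Differentiating once more, $\partial_i \partial_j T_k$ becomes a double Riesz transform of $u_k$, which is bounded on $L^q$ by Calder\'on--Zygmund theory, giving $\|\nabla^2 T\|_{L^q} \lesssim \|u\|_{L^q}$. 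Reconstructing $T$ up to an additive constant from $\nabla T$, Fourier inversion shows $-\Delta T = u$ as distributions, and $T$ lies in the required intersection $\dot{W}^{2,q}(\R^n,\R^n) \cap \dot{W}^{1,nq/(n-q)}(\R^n,\R^n)$.

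To check $\dive T = 0$, the Fourier-side computation yields $\widehat{\dive T}(\xi) = |\xi|^{-2} \widehat{\dive u}(\xi) = 0$; to justify this when $u$ is only in $L^q$, I would first approximate $u$ by compactly supported smooth divergence-free fields, where the identity is transparent, and then pass to the limit in $\dot{W}^{1,nq/(n-q)}$ using the estimate above. For uniqueness, two such solutions $T_1, T_2$ differ by a harmonic $W$ with $\nabla W \in L^{nq/(n-q)}(\R^n,\R^{n^2})$; since each component of $\nabla W$ is itself harmonic and sits in a finite $L^r$ space, the mean value property forces $\nabla W \equiv 0$, so $W$ is constant.

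The only delicate point I anticipate is the passage from smooth test data to general $u \in L^q_{\dive}$ when verifying the divergence-free property, but this is routine given the continuity estimates above together with the density of compactly supported smooth divergence-free fields in $L^q_{\dive}(\R^n,\R^n)$.
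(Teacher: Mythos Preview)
Your proposal is correct and follows essentially the same route as the paper: construct $T$ as the Newtonian/Riesz potential of $u$, use Calder\'on--Zygmund theory (double Riesz transforms) for $\nabla^2 T \in L^q$, a Sobolev-type bound (the paper uses Sobolev--Gagliardo--Nirenberg, you use HLS directly) for $\nabla T \in L^{nq/(n-q)}$, approximation by $C^\infty_{c,\dive}$ fields to secure $\dive T = 0$, and a Liouville argument for uniqueness. The only cosmetic difference is that the paper approximates $u$ from the outset and builds $T$ as a limit of $\varphi^k * G$, whereas you define $T$ directly via singular integrals and invoke approximation only to verify the divergence condition; these are equivalent presentations of the same argument.
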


\begin{proof}
We first prove the existence statement. Let $u \in L^q_{\dive}(\R^n,\R^n)$ and choose $\varphi^k \in C_{c,\dive}^\infty(\R^n,\R^n)$ such that $\|\varphi^k - u\|_{L^q} \to 0$ as $k \to \infty$ (see e.g. ~\cite[Lemma III.2.1]{Gal11}). Now the sequence $(-\Delta[\varphi^k * G])_{k=1}^\infty = (\varphi^k)_{k=1}^\infty$ is Cauchy in $L^q(\R^n,\R^n)$, where $G = -(2 \pi)^{-1} \log |\cdot|$ when $n = 2$ and $G = (n(n-2) \omega_n)^{-1} |\cdot|^{2-n}$ when $n \ge 3$. Thus $(\nabla^2 [\varphi^k * G])_{k=1}^\infty$ is Cauchy in $L^q(\R^n,\R^{n^3})$.

Now $\D[\varphi^k * G] \in L^{nq/(n-q)}(\R^n,\R^{n \times n})$ for every $k \in \N$. By the Sobolev-Gagliardo-Nirenberg theorem, $(\D[\varphi^k * G])_{k=1}^\infty$ is Cauchy in $L^{nq/(n-q)}(\R^n,\R^{n \times n})$. Hence, for every $i \in \{1,\dots,n\}$ there exists $f_i \in \dot{W}^{1,q}(\R^n,\R^n) \cap L^{nq/(n-q)}(\R^n,\R^n)$ such that $\|\nabla^2 [\varphi^k_i * G] - \D f_i\|_{L^q} \to 0$ and $\|\nabla [\varphi^k_i * G] - f_i\|_{L^{nq/(n-q)}} \to 0$. Now $f_i$ is curl-free and thus $f_i = \nabla T_i$, where $T_i \in  \dot{W}^{2,q}(\R^n) \cap \dot{W}^{1,nq/(n-q)}(\R^n)$. Consequently, $T \defeq (T_1,\dots,T_n)$ satisfies $\dive T = \lim_{k \to \infty} \dive \varphi^k * G = 0$ and $-\Delta T = u$.

The uniqueness statement is proved as follows. Suppose $\tilde{T} \in \dot{W}^{2,q}_{\dive}(\R^n,\R^n) \cap \dot{W}^{1,nq/(n-q)}(\R^n,\R^n)$ satisfies $\Delta \tilde{T} = 0$. Then $\D \tilde{T} \in L^{nq/(n-q)}(\R^n,\R^{n \times n})$ is harmonic and therefore vanishes, and so $\tilde{T}$ is constant.
\end{proof}

Before producing the potential for divergence recall that when $1 < p < \infty$ and $R = (R_1,\dots,R_n)$, the operator $I+R \otimes R \colon L^p(\R^n,\R^n) \to L^p(\R^n,\R^n)$ is a projection onto divergence-free vector fields.

\begin{prop} \label{Potential proposition}
Let $k \in \N \cup \{0\}$ and $1 < p < \infty$. The linear operator $B \colon \dot{W}^{k+1,p}(\R^n,\R^{n \times n}_\Delta) \to \dot{W}^{k,p}_{\dive}(\R^n,\R^n)$ has a linear right inverse $u \mapsto \Phi$ such that $\|\nabla^{k+1} \Phi\|_{L^p} \approx \|\nabla^k u\|_{L^p}$ for all $u \in \dot{W}^{k,p}_{\dive}(\R^n,\R^n)$. Furthermore, $\nabla^k u^j \rightharpoonup \nabla^k u$ in $L^p(\R^n,\R^{kn})$ if and only if $\nabla^{k+1} \Phi^j \rightharpoonup \nabla^{k+1} \Phi$ in $L^p(\R^n,\R^{(k+1)n(n-1)/2})$.
\end{prop}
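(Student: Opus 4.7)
The blueprint is the identity $-\Delta T = B \circ C(T)$ recorded just before the proposition: expanding $B(C(T))_j$ term by term yields $-\Delta T_j + \partial_j(\dive T)$, and the second summand vanishes whenever $\dive T = 0$. The natural candidate right inverse is therefore $u \mapsto \Phi \defeq C(T)$, with $T$ a divergence-free solution of $-\Delta T = u$.

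For general $u \in \dot{W}^{k,p}_{\dive}(\R^n,\R^n)$ the data need not be globally integrable and Lemma~\ref{Poisson equation lemma} does not apply directly, so I would specify $T$ via its $(k+2)$-jet using Riesz transforms: for each component $u_i \in \dot{W}^{k,p}$, set
\[
\partial_j \partial_l \partial^\alpha T_i \defeq R_j R_l \bigl(\partial^\alpha u_i\bigr), \qquad |\alpha| = k, \quad j,l \in \{1,\dots,n\}.
\]
The $L^p$-boundedness of Riesz transforms gives $\|\nabla^{k+2} T_i\|_{L^p} \lesssim \|\nabla^k u_i\|_{L^p}$, and the prescription is internally consistent since $R_jR_l = R_lR_j$. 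Integrating this $(k+2)$-jet produces an $L^1_{\mathrm{loc}}$-representative of $T_i$, unique up to polynomials of degree at most $k+1$; pin these down by a fixed linear rule (say, vanishing of the degree-$(k+1)$ Taylor polynomial at the origin). The Fourier-multiplier identity $\sum_l R_l^2 = -I$ then gives $\partial^\alpha(-\Delta T - u) = 0$ for all $|\alpha|=k$, so a polynomial correction (again linear in $u$) achieves $-\Delta T = u$ exactly; analogously, $\dive u = 0$ yields $\sum_i R_i(\partial^\alpha u_i) = 0$, which forces $\dive T$ to be polynomial, to be removed by a final linear polynomial adjustment of $T$.

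Setting $\Phi \defeq C(T)$, one obtains $\Phi \in \dot{W}^{k+1,p}(\R^n,\R^{n\times n}_\Delta)$ with $\|\nabla^{k+1}\Phi\|_{L^p} \lesssim \|\nabla^{k+2} T\|_{L^p} \lesssim \|\nabla^k u\|_{L^p}$, while the identity of the first paragraph yields $B\Phi = -\Delta T = u$. The reverse inequality $\|\nabla^k u\|_{L^p} = \|\nabla^k B\Phi\|_{L^p} \lesssim \|\nabla^{k+1}\Phi\|_{L^p}$ is immediate because $B$ is a homogeneous first-order operator with constant coefficients. Linearity of $u \mapsto \Phi$ follows from the explicit Riesz-based formula for $\nabla^{k+2}T$ together with the linear selection of polynomial corrections. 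The weak-convergence equivalence then reduces to the fact that both $u \mapsto \Phi$ and its left inverse $\Phi \mapsto B\Phi$ are bounded linear maps between the relevant $L^p$-spaces of derivatives, and so preserve weak $L^p$ convergence in either direction.

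The principal technical obstacle is the passage from an $L^p$-level specification of $\nabla^{k+2} T$ to an honest $L^1_{\mathrm{loc}}$ function $T$ in a genuinely linear manner, and the polynomial-correction bookkeeping needed to enforce $-\Delta T = u$ and $\dive T = 0$ without destroying linearity. Once this is arranged, the remainder is a direct application of Calder\'{o}n--Zygmund bounds and the identity $-\Delta T = B\circ C(T)$. A density-and-approximation alternative (mollifying $u$, applying Lemma~\ref{Poisson equation lemma} to each smooth datum, and taking limits in the complete space $\dot{W}^{k+1,p}$) would avoid the Fourier-side manipulations but would require an analogous density statement for $C_{c,\dive}^\infty$ in $\dot{W}^{k,p}_{\dive}$.
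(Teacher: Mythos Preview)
Your approach is correct and shares the paper's core idea—set $\Phi = C(T)$ with $T$ a divergence-free solution of $-\Delta T = u$, and bound $\nabla^{k+2}T$ via second-order Riesz transforms of $\nabla^k u$—but it produces $T$ differently. The paper does not solve for $T$ for general $u$; instead it first shows that $\{(I+R\otimes R)\varphi : \varphi \in C_c^\infty\}$ is dense in $\dot W^{k,p}_{\dive}$ (not $C^\infty_{c,\dive}$, which would be circular with Proposition~\ref{Density proposition}, but the image of $C_c^\infty$ under the Leray projector), invokes Lemma~\ref{Poisson equation lemma} on that dense set, and extends by completeness. Your direct Riesz-transform specification of the $(k+2)$-jet avoids this density step and makes the weak-convergence equivalence a one-liner (both $\nabla^k u \mapsto \nabla^{k+1}\Phi$ and its inverse are bounded linear on $L^p$, hence weak-to-weak continuous), whereas the paper runs a separate $\epsilon$-approximation argument. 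The cost is that your polynomial bookkeeping is heavier than you indicate: after enforcing $-\Delta T = u$ by a first correction, the residual $\dive T$ is a \emph{harmonic} polynomial of degree at most $k$, and removing it while preserving $-\Delta T = u$ amounts to finding a harmonic vector polynomial $Q$ with $\dive Q = -\dive T$ (equivalently, a polynomial right inverse for $B$ on divergence-free polynomial data). This exists and can be chosen linearly—it is the polynomial Poincar\'e lemma via the standard homotopy operator—but it is not quite as automatic as your sketch suggests.
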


\begin{proof}
It suffices to show that the linear right inverse exists in a dense subset of $\dot{W}^{k,p}_{\dive}(\R^n,\R^n)$. Seeking a suitable dense set, let $u \in \dot{W}^{k,p}_{\dive}(\R^n,\R^n)$ and choose $u^j \in C_c^\infty(\R^n,\R^n)$ such that $\|\nabla^k(u^j-u)\|_{L^p} \to 0$. When $\alpha \in (\N \cup \{0\})^n$ with $|\alpha|=k$, note that $\partial^\alpha u \defeq (\partial^\alpha u_1,\dots, \partial^\alpha u_n) \in L^p_{\dive}(\R^n,\R^n)$ and thus $(I+ R \otimes R) \partial^\alpha u = \partial^\alpha u$. Now
\[\partial^\alpha (I + R \otimes R) u^j = (I + R \otimes R) \partial^\alpha u^j \to (I + R \otimes R) \partial^\alpha u = \partial^\alpha u\]
in $L^p(\R^n,\R^n)$. We conclude that $\{(I + R \otimes R) \varphi \colon \varphi \in C_c^\infty(\R^n,\R^n)\}$ is dense in $\dot{W}^{k,p}_{\dive}(\R^n,\R^n)$.

Fix now $q \in ]1,n[$ and let $\varphi \in C_c^\infty(\R^n,\R^n)$. Use Lemma \ref{Poisson equation lemma} to choose a solution $T \in \dot{W}^{2,q}_{\dive}(\R^n,\R^n) \cap \dot{W}^{§1,nq/(n-q)}(\R^n,\R^n)$ of the Poisson equation $-\Delta T = (I + R \otimes R) \varphi$. Now $\Phi \defeq	C(T)$ is well-defined and $B \Phi = (I + R \otimes R) \varphi$. Moreover, $(I + R \otimes R) \varphi \mapsto \Phi$ is linear. Trivially $\|\nabla^{k+1} \Phi\|_{L^p} \lesssim \|\nabla^{k+2} T\|_{L^p}$, whereas for every $\alpha \in (\N \cup \{0\})^n$ with $|\alpha|=k+2$ we write $\alpha = (\alpha-\gamma)+\gamma$ with $|\gamma|=2$ and obtain
\[\|\partial^\alpha T\|_{L^p}
= \|R^\gamma \partial^{\alpha-\gamma} (I + R \otimes R) \varphi\|_{L^p}
\lesssim \|\nabla^k (I + R \otimes R) \varphi\|_{L^p},\]
where $R^\gamma$ is the second-order Riesz transform corresponding to $\gamma$. The converse estimate $\|\nabla^k (I + R \otimes R) \varphi\|_{L^p} = \|\nabla^k B \Phi\|_{L^p} \lesssim \|\nabla^{k+1} \Phi\|_{L^p}$ is obvious.

\vspace{0.2cm}
We next show the equivalence of weak convergences. Clearly $\nabla^{k+1} \Phi^j \rightharpoonup \nabla^{k+1} \Phi$ implies $\nabla^k u^j \rightharpoonup \nabla^k u$. For the other direction suppose $\nabla^k u^j \rightharpoonup \nabla^k u$ and let $\epsilon > 0$. Choose $u_\epsilon \in C_{c,\dive}^\infty(\R^n,\R^n)$ such that $\|\nabla^k u_\epsilon - \nabla^k u\|_{L^p} < \epsilon$. For every $j \in \N$ select $u^j_\epsilon \approx u_\epsilon + u^j - u$ in $C_{c,\dive}^\infty(\R^n,\R^n)$ such that $\|\nabla^k u^j_\epsilon - \nabla^k u^j\|_{L^p} < 2 \epsilon$ and $\nabla^k u^j_\epsilon \rightharpoonup \nabla^k u_\epsilon$. It now suffices to note that $\nabla^k u^j_\epsilon \rightharpoonup \nabla^k u_\epsilon$ implies, as above, $\partial^\alpha T^j_\epsilon = R^\gamma \partial^{\alpha-\gamma} u^j_\epsilon \rightharpoonup R^\gamma \partial^{\alpha-\gamma} u_\epsilon = \partial^\alpha T_\epsilon$ for every $\alpha \in (\N \cup \{0\})^n$ with $|\alpha|=k+2$, which in turn implies that $\nabla^{k+1} \Phi^j_\epsilon \rightharpoonup \nabla^{k+1} \Phi_\epsilon$. We conclude that $\nabla^{k+1} \Phi^j \rightharpoonup \nabla^{k+1} \Phi$.
\end{proof}

In \textsection \ref{The case A1 in div, curl} we need the following variant of Proposition \ref{Potential proposition}.

\begin{prop} \label{Potential proposition 2}
Let $k \in \N \cup \{0\}$ and $1 < p < \infty$. The linear operator $B \colon \cap_{l=1}^{k+1}\dot{W}^{l,p}(\R^n,\R^{n \times n}_\Delta) \to W^{k,p}_{\dive}(\R^n,\R^n)$ has a linear right inverse $u \mapsto \Phi$ such that $\sum_{l=1}^{k+1} \|\nabla^l \Phi\|_{L^p} \approx \|u\|_{W^{k,p}}$ for all $u \in W^{k,p}_{\dive}(\R^n,\R^n)$.
\end{prop}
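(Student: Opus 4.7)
The statement is the inhomogeneous companion of Proposition \ref{Potential proposition}, and the same construction, namely $\Phi = C(T)$ with $T$ a divergence-free solution of $-\Delta T = u$, will do the job. The new ingredient is the need to control $\nabla^1\Phi,\dots,\nabla^{k+1}\Phi$ \emph{simultaneously} by $\nabla^0 u,\dots,\nabla^k u$, which makes it natural to bypass the Sobolev embeddings used inside Lemma \ref{Poisson equation lemma} and invoke Calder\'{o}n-Zygmund bounds on second-order Riesz transforms directly.

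First I would show that $\{(I+R\otimes R)\psi \colon \psi \in C_c^\infty(\R^n,\R^n)\}$ is dense in $W^{k,p}_{\dive}(\R^n,\R^n)$. The operator $I+R\otimes R$ is a Fourier multiplier commuting with all partial derivatives, so it is bounded on $W^{k,p}(\R^n,\R^n)$; it acts as the identity on divergence-free vector fields since $\sum_j R_j u_j = 0$ whenever $\dive u = 0$. Approximating $u \in W^{k,p}_{\dive}$ by $\psi^j \in C_c^\infty(\R^n,\R^n)$ in $W^{k,p}$-norm and applying the projection yields $(I+R\otimes R)\psi^j \to u$ in $W^{k,p}$, as required.

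For $\varphi \defeq (I+R\otimes R)\psi$ with $\psi \in C_c^\infty(\R^n,\R^n)$, I would fix some $q \in ]1,n[$ and apply Lemma \ref{Poisson equation lemma} to pick a divergence-free solution $T \in \dot{W}^{2,q}(\R^n,\R^n)\cap \dot{W}^{1,nq/(n-q)}(\R^n,\R^n)$ of $-\Delta T = \varphi$, unique up to a constant, and set $\Phi \defeq C(T)$. The identity $-\Delta T = B\circ C(T)$ for divergence-free $T$ then gives $B\Phi = \varphi$, and $\varphi \mapsto \Phi$ is linear since the integration constant of $T$ is annihilated by $C$. For the norm equivalence, the Fourier-multiplier identity $\partial_i \partial_j T = -R_i R_j \varphi$ upgrades, for every multi-index $\alpha$ with $|\alpha| = l-1$ and $1 \le l \le k+1$, to
\[
\partial_i \partial_j \partial^\alpha T = -R_i R_j \partial^\alpha \varphi,
\]
so Calder\'{o}n-Zygmund boundedness of $R_i R_j$ on $L^p$ yields $\|\nabla^{l+1} T\|_{L^p} \lesssim \|\nabla^{l-1} \varphi\|_{L^p}$. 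Since $\nabla^l \Phi$ is a linear combination of components of $\nabla^{l+1} T$, summing produces $\sum_{l=1}^{k+1}\|\nabla^l \Phi\|_{L^p} \lesssim \|\varphi\|_{W^{k,p}}$; the reverse bound is immediate from $\varphi = B\Phi$ and the first-order nature of $B$.

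The main obstacle is matching the derivative bookkeeping in the chain of identities above with the regularity furnished by Lemma \ref{Poisson equation lemma}, which only provides $T \in \dot W^{2,q}$ a priori: one has to verify that the higher-order Riesz transform identities genuinely hold for this specific $T$, which they do because $\varphi$ is smooth and lies in every $W^{k,p}(\R^n,\R^n)$ and because $T$ is uniquely determined up to a constant. With the two-sided norm estimate on the dense class in hand, the linear map $\varphi \mapsto \Phi$ extends by continuity to a linear right inverse of $B$ defined on all of $W^{k,p}_{\dive}(\R^n,\R^n)$, completing the proof.
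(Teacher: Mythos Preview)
Your proposal is correct and follows essentially the same route as the paper: construct $\Phi = C(T)$ from a divergence-free solution of $-\Delta T = (I+R\otimes R)\psi$ on the dense class of smooth compactly supported $\psi$, obtain the two-sided estimate $\sum_{l=1}^{k+1}\|\nabla^l\Phi\|_{L^p}\approx\|u\|_{W^{k,p}}$ via the second-order Riesz transform identity $\partial^\alpha T = R^\gamma \partial^{\alpha-\gamma}\varphi$ (exactly as in the proof of Proposition~\ref{Potential proposition}), and then pass to the limit. The paper phrases the extension step as a Cauchy-sequence argument in each $\dot W^{l,p}$ rather than as ``extension by continuity,'' but the content is the same.
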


\begin{proof}
When $u \in W^{k,p}_{\dive}(\R^n,\R^n)$, again we choose mappings $u^j \in C_c^\infty(\R^n,\R^n)$ such that $\|u^j-u\|_{W^{k,p}} \to 0$, solve the Poisson equation $-\Delta T^j = (I + R \otimes R) u^j$ and set $\Phi^j \defeq C (T^j)$. Now $(\Phi^j)_{j=1}^\infty$ is a Cauchy sequence in $\dot{W}^{l,p}(\R^n,\R^{n \times n}_\Delta)$ for every $l \in \{1,\dots,k+1\}$ and therefore $(\D \Phi^j)_{j=1}^\infty$ has a limit $w$ in $W^{k,p}(\R^n,\R^{n^2(n-1)/2})$. On the other hand, there exists $\Phi \in \dot{W}^{1,p}(\R^n,\R^{n \times n}_\Delta)$ such that $\|\D \Phi^j - \D \Phi\|_{L^p} \to 0$ and so $\D \Phi = w \in W^{k,p}(\R^n,\R^{n^2(n-1)/2})$. As in the proof above, $\sum_{l=1}^{k+1} \|\nabla^l \Phi\|_{L^p} \approx \|\D \Phi\|_{W^{k,p}} \approx \|u\|_{W^{k,p}}$.
\end{proof}

Analogous results of course hold for $ W^{k,p}_{\curl}(\R^n,\R^n)$ and $\dot{W}^{k,p}_{\curl}(\R^n,\R^n)$.

\begin{prop} \label{Potential proposition 3}
Let $k \in \N \cup \{0\}$ and $1 < p < \infty$. If $u \in \dot{W}^{k,p}_{\curl}(\R^n,\R^n)$, then there exists $\Psi \in \dot{W}^{k+1,p}(\R^n)$ such that $\nabla \Psi = u$ and $\|\nabla^{k+1} \Psi\|_{L^p} \approx \|\nabla^k u\|_{L^p}$. If, furthermore, $u \in W^{k,p}_{\curl}(\R^n,\R^n)$, then $\sum_{l=1}^{k+1} \|\nabla^l \Psi\|_{L^p} \approx \|u\|_{W^{k,p}}$.
\end{prop}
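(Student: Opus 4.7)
The argument mirrors Propositions \ref{Potential proposition} and \ref{Potential proposition 2}, with divergence replaced by curl and the operator $B$ replaced by $\nabla$; since the primitive $\Psi$ is scalar-valued, the norm estimates simplify considerably, even becoming equalities. My first step would be to show that $\{-R \otimes R \varphi \colon \varphi \in C_c^\infty(\R^n,\R^n)\}$ is dense in $\dot{W}^{k,p}_{\curl}(\R^n,\R^n)$ with respect to the seminorm $\|\nabla^k \cdot\|_{L^p}$, and similarly in $W^{k,p}_{\curl}(\R^n,\R^n)$ with respect to the full Sobolev norm. Here one uses that $-R \otimes R$ is the complement of the projection $I + R \otimes R$ onto divergence-free fields appearing in Proposition \ref{Potential proposition}; consequently $-R \otimes R$ acts as the identity on curl-free fields, so for $u \in \dot{W}^{k,p}_{\curl}$ and $C_c^\infty$-approximants $\varphi^j$ of $u$ one has $\nabla^k(-R \otimes R \varphi^j - u) = -R \otimes R \nabla^k(\varphi^j - u) \to 0$ in $L^p$ by $L^p$-boundedness of Riesz transforms.

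For each $\varphi \in C_c^\infty(\R^n,\R^n)$ I would then set $\Psi^\varphi \defeq -\dive(G * \varphi)$, where $G$ is the Newton kernel of Lemma \ref{Poisson equation lemma}; the identity $\partial_i \partial_j (-\Delta)^{-1} = R_i R_j$ immediately gives $\nabla \Psi^\varphi = -R \otimes R \varphi$. With $\Psi^j \defeq \Psi^{\varphi^j}$ the sequence $\nabla^{k+1}\Psi^j = \nabla^k(-R \otimes R \varphi^j)$ is Cauchy in $L^p$, so by the completeness of $\dot{W}^{k+1,p}(\R^n)$ recalled in \textsection 2.2 there is $\Psi \in \dot{W}^{k+1,p}(\R^n)$ with $\nabla^{k+1}\Psi^j \to \nabla^{k+1}\Psi$ in $L^p$. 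Both $\nabla \Psi$ and $u$ are curl-free, so their difference is a curl-free polynomial of degree $\le k-1$, hence of the form $\nabla Q$ for a polynomial $Q$ of degree $\le k$; subtracting $Q$ from $\Psi$ (which does not affect $\nabla^{k+1}\Psi$) yields $\nabla \Psi = u$ identically. The inhomogeneous case is handled the same way, approximating $u \in W^{k,p}_{\curl}$ in the full $W^{k,p}$-norm.

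Once $\nabla \Psi = u$ is established, the estimates follow from the identities $\nabla^l \Psi = \nabla^{l-1} u$ for $l \in \{1,\dots,k+1\}$: in the homogeneous case this immediately gives $\|\nabla^{k+1}\Psi\|_{L^p} = \|\nabla^k u\|_{L^p}$, and summation yields the inhomogeneous statement $\sum_{l=1}^{k+1}\|\nabla^l \Psi\|_{L^p} = \|u\|_{W^{k,p}}$. The only genuine subtlety -- and arguably the main obstacle -- is the polynomial correction step, which must be performed with care given the paper's convention of \emph{not} identifying polynomials of degree below $k$ in $\dot{W}^{k,p}$; one must ensure that $\nabla \Psi = u$ holds as a pointwise identity rather than merely modulo polynomials. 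Apart from this technical point, the proof is essentially a direct translation of the divergence-side argument, made notably simpler by the scalar nature of the primitive $\Psi$.
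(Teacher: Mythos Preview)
Your proposal is correct and follows precisely the route the paper intends: the paper gives no separate proof of this proposition, merely stating that ``analogous results of course hold'' after the divergence-side Propositions \ref{Potential proposition} and \ref{Potential proposition 2}, and your argument is exactly the expected adaptation --- replacing the Leray projection $I+R\otimes R$ by its complement $-R\otimes R$ and the potential $\Phi=C(T)$ by the scalar potential $\Psi=-\dive(G*\varphi)$. One tiny quibble: your claimed \emph{equalities} $\|\nabla^{k+1}\Psi\|_{L^p}=\|\nabla^k u\|_{L^p}$ and $\sum_{l=1}^{k+1}\|\nabla^l\Psi\|_{L^p}=\|u\|_{W^{k,p}}$ are only $\approx$ with the paper's norm convention, since a multi-index $\beta$ with $|\beta|=l$ can be written as $\alpha+e_i$ in several ways and the corresponding component $\partial^\beta\Psi$ is therefore counted with multiplicity in $\|\nabla^{l-1}u\|_{L^p}$; this is of course irrelevant to the statement.
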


We also record the following fact.

\begin{prop} \label{Density proposition}
Suppose $k \in \N \cup \{0\}$ and $1 < p < \infty$. Then $C_{c,\dive}^\infty(\R^n,\R^n)$ is dense in $\dot{W}^{k,p}_{\dive}(\R^n,\R^n)$ and $C_{c,\curl}^\infty(\R^n,\R^n)$ is dense in $\dot{W}^{k,p}_{\curl}(\R^n,\R^n)$.
\end{prop}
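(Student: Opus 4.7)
The plan is to reduce both assertions to the density of $C_c^\infty$ in the homogeneous Sobolev space $\dot{W}^{k+1,p}(\R^n)$ (which is stated earlier in the excerpt), via the potentials constructed in Propositions \ref{Potential proposition} and \ref{Potential proposition 3}. In both cases, the potential map is a bounded linear right inverse of a first-order constant-coefficient operator that automatically produces fields inside the correct constrained space. So if we approximate the potential by test functions in the potential's Sobolev seminorm, applying the differential operator gives a compactly supported smooth constrained field that approximates the original field in the correct seminorm.

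For the curl-free assertion I would proceed as follows. Given $u \in \dot{W}^{k,p}_{\curl}(\R^n,\R^n)$, apply Proposition \ref{Potential proposition 3} to obtain $\Psi \in \dot{W}^{k+1,p}(\R^n)$ with $\nabla \Psi = u$. Since $C_c^\infty(\R^n)$ is dense in $\dot{W}^{k+1,p}(\R^n)$, pick $\Psi^j \in C_c^\infty(\R^n)$ with $\|\nabla^{k+1}(\Psi^j-\Psi)\|_{L^p} \to 0$, and set $u^j \defeq \nabla \Psi^j$. Then $u^j \in C_{c,\curl}^\infty(\R^n,\R^n)$ and
\[\|\nabla^k(u^j-u)\|_{L^p} = \|\nabla^{k+1}(\Psi^j-\Psi)\|_{L^p} \to 0.\]

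For the divergence-free assertion, given $u \in \dot{W}^{k,p}_{\dive}(\R^n,\R^n)$, use Proposition \ref{Potential proposition} to pick $\Phi \in \dot{W}^{k+1,p}(\R^n,\R^{n\times n}_\Delta)$ with $B\Phi = u$ and the norm equivalence $\|\nabla^{k+1}\Phi\|_{L^p} \approx \|\nabla^k u\|_{L^p}$. Choose $\Phi^j \in C_c^\infty(\R^n,\R^{n\times n}_\Delta)$ converging to $\Phi$ in the $\dot{W}^{k+1,p}$-seminorm, and set $u^j \defeq B\Phi^j$. By the definition of $B$, every $u^j$ lies in $C_{c,\dive}^\infty(\R^n,\R^n)$. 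Since $B$ is a first-order homogeneous operator with constant coefficients, $\|\nabla^k(u^j-u)\|_{L^p} = \|\nabla^k B(\Phi^j-\Phi)\|_{L^p} \lesssim \|\nabla^{k+1}(\Phi^j-\Phi)\|_{L^p} \to 0$, which completes the argument.

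There is no real obstacle: once the potential representation and its norm equivalence are in hand (already granted by the preceding propositions), and the density of $C_c^\infty$ in the homogeneous Sobolev space of the potential is invoked, the constraint (divergence-free, respectively curl-free) is preserved automatically because it is built into the range of the potential operator. The only thing worth spelling out is that $B\Phi^j$ and $\nabla\Psi^j$ inherit compact support from $\Phi^j$ and $\Psi^j$, which is obvious since both $B$ and $\nabla$ are differential operators and therefore support-preserving.
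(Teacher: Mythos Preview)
Your proof is correct and follows essentially the same approach as the paper: lift $u$ to a potential via Proposition~\ref{Potential proposition} (respectively Proposition~\ref{Potential proposition 3}), approximate the potential by $C_c^\infty$ functions using density in $\dot{W}^{k+1,p}$, and push forward through $B$ (respectively $\nabla$) to obtain compactly supported constrained approximants. The paper presents only the divergence-free case and remarks that the curl-free case is similar, whereas you spell out both, but the arguments coincide.
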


\begin{proof}
Let $u \in \dot{W}^{k,p}_{\dive}(\R^n,\R^n)$ and choose $\Phi \in \dot{W}^{k+1,p}(\R^n,\R^{n \times n}_\Delta)$ with $B \Phi = u$. Select $\Phi^j \in C_c^\infty(\R^n,\R^{n \times n}_\Delta)$ such that $\Phi^j \to \Phi$ in $\dot{W}^{k+1,p}(\R^n,\R^{n \times n}_\Delta)$. Then $C_{c,\dive}^\infty(\R^n,\R^n) \ni B(\Phi^j) \to B(\Phi) = u$ in $\dot{W}^{k,p}_{\dive}(\R^n,\R^n)$. A similar proof applies to $\dot{W}^{k,p}_{\curl}(\R^n,\R^n)$.
\end{proof}

\section{Proof of Theorem \ref{Main theorem}} \label{Proof of Theorem A}
Theorem \ref{Main theorem} is proved in this section, and we divide the proof into two parts. In \S \ref{A functional analytical lemma} we present an extension of a functional analytical lemma from ~\cite{CLMS93}, and the proof is finished in \S \ref{Proof of the main result}. In \S \ref{Failure of norm control} we show that, as a corollary of Theorem \ref{Main theorem}, we have very little control on the norms of solutions of the Jacobian equation.

\subsection{A functional analytical lemma} \label{A functional analytical lemma}
In the proof of Theorem \ref{Jacobian decomposition theorem} Coifman, Lions, Meyer, and Semmes used functional analytical lemmas ~\cite[Lemmas III.1-III.2]{CLMS93} which give equivalent conditions for the existence of atomic decompositions in Banach spaces. We add to their criteria a further characterization that can be used when an atomic decomposition does \textit{not} exist.

When $X$ is a Banach space and $V \subset X$ is bounded, recall that the \textit{s-convex hull} of $V$ is defined by
\[\operatorname{s}(V) \defeq \left\{ \sum_{j=1}^\infty \lambda^j x^j \colon \lambda^j \ge 0 \text{ and } x^j \in V \text{ for all } j \in \N, \; \sum_{j=1}^\infty \lambda^j = 1 \right\}.\]
It is easy to show that $\text{co}(V) \subset \operatorname{s}(V) \subset \overline{\text{co}}(V)$, and thus $\overline{\operatorname{s}(V)} = \overline{\operatorname{co}}(V)$.

\begin{lemma} \label{Main lemma}
Suppose $V$ is a symmetric, bounded subset of a Banach space $X$. The following conditions are equivalent.
\begin{enumerate}
\renewcommand{\labelenumi}{(\roman{enumi})}
\item $\|l\|_{X^*}$ and $\sup_{x \in V} \langle l, x \rangle$ are equivalent norms in $X^*$.
\item $\overline{\operatorname{co}}(V)$ contains a ball centered at the origin.
\item $\operatorname{s}(V)$ contains a ball centered at the origin.
\item The set $\{\sum_{j=1}^\infty \lambda^j x^j \colon x^j \in V \; \forall j \in \N, \; \sum_{j=1}^\infty |\lambda^j| < \infty\}$ is of the second category in $X$.
\end{enumerate}
\end{lemma}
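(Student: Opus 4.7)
The plan is to close a small cycle of implications: (i)$\Leftrightarrow$(ii) from a duality argument, (ii)$\Leftrightarrow$(iii) by a bootstrapping iteration, (iii)$\Rightarrow$(iv) trivially, and (iv)$\Rightarrow$(ii) by Baire category. The key observation that runs throughout is that symmetry of $V$ forces $0 \in \mathrm{co}(V)$ (since $\tfrac12 v + \tfrac12(-v) = 0$), which lets us freely pad and truncate sums.

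First I would establish (i)$\Leftrightarrow$(ii) via the bipolar theorem. Because $0 \in \mathrm{co}(V)$, the bipolar of $V$ equals $\overline{\mathrm{co}}(V)$, and $\overline{\mathrm{co}}(V) \supset B(0,r)$ is equivalent by Hahn--Banach to the inequality $r\|l\|_{X^*} \le \sup_{x \in V} \langle l, x \rangle$ for every $l \in X^*$. Paired with the trivial bound $\sup_{x \in V} \langle l, x \rangle \le (\sup_{x \in V} \|x\|)\,\|l\|_{X^*}$, which uses only boundedness of $V$, this gives the equivalence of norms in (i). For (ii)$\Leftrightarrow$(iii), the direction (iii)$\Rightarrow$(ii) is free from $s(V) \subset \overline{\mathrm{co}}(V)$. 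The converse requires a standard geometric-series bootstrapping: assuming $\overline{\mathrm{co}}(V) \supset B(0, 2r)$ and fixing $x \in B(0,r)$, I would inductively select finite convex combinations $y_k = \sum_j \mu_j^{(k)} v_j^{(k)}$ of elements of $V$, rescaled so that $\sum_j \mu_j^{(k)} = 2^{-(k-1)}$ and $\|x - y_1 - \cdots - y_k\| < r/2^k$. Concatenating the finite families yields $x = \sum_j \lambda_j w_j$ with $w_j \in V$, $\lambda_j \ge 0$, and $\sum_j \lambda_j$ dominated by a fixed geometric sum $C$; a single normalization by $C$ then places $x/C$ inside $s(V)$, so $s(V) \supset B(0,r/C)$.

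The implication (iii)$\Rightarrow$(iv) is immediate, since the set in (iv) contains $s(V)$ and hence an open ball, which forces it to be of second category. The main work lies in (iv)$\Rightarrow$(ii). Writing $A_n \defeq \{\sum_j \lambda^j x^j : x^j \in V,\; \sum_j |\lambda^j| \le n\}$, the set in (iv) equals $\bigcup_{n \in \N} A_n$. Each $A_n$ is bounded (by $n \sup_V \|x\|$), convex (combine two representatives and halve), and symmetric (flip signs of the coefficients, using $V = -V$). Baire category then supplies some $\overline{A_n}$ with nonempty interior, say $y_0 + B(0, \epsilon) \subset \overline{A_n}$; convexity and symmetry upgrade this to $B(0,\epsilon) = \tfrac12(y_0 + B(0,\epsilon)) + \tfrac12(-y_0 + B(0,\epsilon)) \subset \overline{A_n}$. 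Since $A_n = nA_1$ and $A_1 \subset \overline{\mathrm{co}}(V)$ (the partial sums of any element of $A_1$ lie in $\mathrm{co}(V \cup \{0\}) = \mathrm{co}(V)$), we conclude $\overline{\mathrm{co}}(V) \supset B(0, \epsilon/n)$.

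The hard part will be the bootstrapping in (ii)$\Rightarrow$(iii), where the rescaling of the approximating convex combinations has to be tracked carefully so that the resulting expansion genuinely lies in $s(V)$ rather than only in $\overline{\mathrm{co}}(V)$. A secondary subtlety is the convexity and symmetry of each $A_n$ in (iv)$\Rightarrow$(ii), which is needed to promote Baire's generic interior point to a ball centered at the origin; one should also note that closedness of $A_n$ is not expected without extra compactness, so the argument must operate on $\overline{A_n}$ throughout.
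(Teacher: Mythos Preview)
Your proposal is correct and follows essentially the same approach as the paper. The paper defers the equivalence (i)$\Leftrightarrow$(ii)$\Leftrightarrow$(iii) to \cite{CLMS93} and only writes out (iv)$\Rightarrow$(i), which it proves by the same Baire-plus-symmetrization argument you give for (iv)$\Rightarrow$(ii) (noting that the set in (iv) equals $\bigcup_k k\,s(V)$, so some $k\,\overline{\mathrm{co}}(V)$ has interior, whence the symmetric convex set $\overline{\mathrm{co}}(V)$ contains a ball at the origin); your additional self-contained bipolar and bootstrapping arguments for (i)$\Leftrightarrow$(ii) and (ii)$\Rightarrow$(iii) simply fill in what the paper cites.
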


The equivalence of $(i)-(iii)$ is proved at ~\cite[pp. 262-264]{CLMS93}, and direction $(iii) \Rightarrow (iv)$ follows from the Baire category theorem. In this article we only use implication $(iv) \Rightarrow (i)$.

\begin{proof}[Proof of direction $(iv) \Rightarrow (i)$]
Suppose (iv) holds. Since $V$ is symmetric,
\[\left\{ \sum_{j=1}^\infty \lambda^j x^j \colon x^j \in V \; \forall j \in \N, \; \sum_{j=1}^\infty |\lambda^j| < \infty \right\} = \bigcup_{k=1}^\infty k \operatorname{s}(V).\]
By assumption, there exists $k \in \N$ such that $\overline{k \operatorname{s}(V)} = k \overline{\operatorname{co}}(V)$ contains a ball $B(x_0,\delta)$, and so $B(0,2\delta) = B(x_0,\delta) - B(x_0,\delta) \subset 2 k \overline{\operatorname{co}}(V)$. When $l \in X^*$, we use the previous set inclusion and the boundedness of $V$ to get, as in ~\cite{CLMS93},
\[\frac{\delta}{k} \|l\|_{X^*} \le \sup_{x \in \overline{\operatorname{co}}(V)} \langle l,x \rangle = \sup_{x \in \overline{\operatorname{co}}(V)} |\langle l,x \rangle| = \sup_{x \in V} |\langle l,x \rangle| = \sup_{x \in V} \langle l,x \rangle \le C \|l\|_{X^*}.\]
\end{proof}

\subsection{Completion of the proof} \label{Proof of the main result}
We denote
\[V \defeq \left\{ J u \colon \|u\|_{W^{1,n}} \le 1 \right\} \subset \HR\]
and set out to show that the norms $\|b\|_{\text{BMO}}$ and $\sup_{h \in V} \int_{\R^n} bh$ are not equivalent in $\BMO$. Theorem \ref{Main theorem} then follows from Lemma \ref{Main lemma}. The basic phenomenon behind the proof is the incompatibility of the scaling properties of $\W$ and $\BMO = \HR^*$.

When $\tau > 0$, the change of variables $y = \tau^{-1} x$ yields the formulas
\begin{equation} \label{tau formula for BMO}
\|b(\tau \cdot)\|_{\text{BMO}} = \|b\|_\text{BMO},
\end{equation}
\begin{equation} \label{tau formula for Sobolev}
\int_{\R^n} |u(\tau \cdot)|^n + \sum_{i,j=1}^n \int_{\R^n} |\partial_j [u_i(\tau \cdot)]|^n = \tau^{-n} \int_{\R^n} |u|^n + \sum_{i,j=1}^n \int_{\R^n} |\partial_j u_i|^n,
\end{equation}
\begin{equation} \label{tau formula for change of varables}
\int_{\R^n} b(\tau \cdot) J [u(\tau \cdot)] = \int_{\R^n} b J u
\end{equation}
for all $b \in \BMO$ and $u \in \W$.

\begin{proof}[Proof of Theorem \ref{Main theorem}]
Seeking contradiction, assume that
\begin{equation} \label{Counter-assumption}
\|b\|_\text{BMO} \lesssim \sup_{\|u\|_{W^{1,n}} \le 1} \int_\C b J u = \sup_{u \neq 0} \frac{\int_\C b J u}{\|u\|_{W^{1,n}}^n}
\end{equation}
for all $b \in \BMO$. Fix $b \in \CO$ with $\|b\|_{\text{BMO}} = 1$.

Let $\tau > 0$. By \eqref{tau formula for BMO}-\eqref{Counter-assumption}, there exists $v = u(\tau \cdot) \in \W \setminus \{0\}$ such that
\begin{equation} \label{False for small tau}
\begin{array}{lcl}
    1
&=& \ds \|b(\tau \cdot)\|_{\text{BMO}}
\lesssim \frac{\int_\C b(\tau \cdot) J v}{\int_{\R^n} |v|^n + \sum_{i,j=1}^n \int_{\R^n} |\partial_j v_i|^n} \\
&=& \ds \frac{\int_{\R^n} b J u}{\tau^{-n} \int_{\R^n} |u|^n + \sum_{i,j=1}^n \int_{\R^n} |\partial_j u_i|^n}.
  \end{array}
\end{equation}
In \eqref{False for small tau} the mapping $u$ depends on $\tau$. Our goal is to show that when $\tau > 0$ is small enough, \eqref{False for small tau} leads to a contradiction.

Let $M > 0$ and use integration by parts and Young's inequality to get
\[\begin{array}{lcl}
      \displaystyle \int_{\R^n} b J u
&=&   \displaystyle - \int_{\R^n} u_1 J(b,u_2,\dots,u_n) \\
&\le& \displaystyle \int_{\R^n} \left( \frac{1}{n} (M |u_1|)^n + \frac{n-1}{n} \left( \frac{|J(b,u_2,\dots,u_n)|}{M} \right)^\frac{n}{n-1} \right) \\
&\le& \displaystyle \frac{1}{n} M^n \int_{\R^n} |u|^n + \frac{n-1}{n} \frac{\|\nabla b\|_{L^\infty}^\frac{n}{n-1}}{M^\frac{n}{n-1}} \int_{\R^n} |\D u|^n,
  \end{array}\]
where we used the Hadamard inequality $|J(b,u_2,\dots,u_n)| \le |\nabla b| \prod_{j=2}^n |\nabla u_j|$. By combining the previous estimate with \eqref{False for small tau} we get
\begin{equation} \label{False inequality}
\begin{array}{lcl}
& &  \ds \tau^{-n} \int_{\R^n} |u|^n + \sum_{i,j=1}^n \int_{\R^n} |\partial_j u_i|^n \\
&\le& \ds C(n) \left( M^n \int_{\R^n} |u|^n + \frac{\|\nabla b\|_{L^\infty}^\frac{n}{n-1}}{M^\frac{n}{n-1}} \sum_{i,j=1}^n \int_{\R^n} |\partial_j u_i|^n \right).
  \end{array}
\end{equation}
When we choose $M$ large enough and then $\tau$ small enough (both depending on $b$), a mapping $u \in \W \setminus \{0\}$ satisfying \eqref{False inequality} cannot exist, and so we have obtained a contradiction.
\end{proof}

\subsection{Failure of norm control} \label{Failure of norm control}
Despite Theorem \ref{Main theorem} and Corollary \ref{Main corollary}, the range of $J \colon \W \to \HR$ is dense in $\HR$. This follows from the remarkable work of G. Cupini, B. Dacorogna and O. Kneuss on the Jacobian equation in ~\cite[Theorem 1]{CDK09} and more directly from ~\cite[Theorem 1]{Kne12}. However, solutions of the Jacobian equation utterly lack norm control, as we show in this subsection.

A mapping $u \in \W$ is said to be a \textit{minimum norm solution} if $\|u\|_{W^{1,n}} = \min \{\|v\|_{W^{1,n}} \colon J v = J u\}$. If $h \in \HR$ and the Jacobian equation $J u = h$ has a solution, then the equation has a minimum norm solution; this follows from the weak continuity of the Jacobian and the direct method of the calculus of variations.

\begin{prop} \label{Proposition on failure of norm control}
Let $C > 0$. Then the set
\[\{J u \colon u \text{ is a minimum norm solution, } \|u\|_{W^{1,n}}^n \le C \|J u\|_{\mathcal{H}^1}\}\]
is nowhere dense in $\HR$ and the set
\[\{J u \colon u \text{ is a minimum norm solution, } \|u\|_{W^{1,n}}^n > C \|J u\|_{\mathcal{H}^1}\}\]
is dense in $\HR$.
\end{prop}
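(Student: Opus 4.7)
The plan is to combine Theorem \ref{Main theorem} (via Lemma \ref{Main lemma}) with the density of $\{J u : u \in \W\}$ in $\HR$ mentioned above. I will first prove that the set $A_C$ in the statement is nowhere dense, and then deduce that the complementary set $B_C$ is dense by a Baire-type argument.

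For the nowhere-dense part, the key observation is that every $h = J u \in A_C$ admits a normalization $h = \lambda J v$ with $v = u/\|u\|_{W^{1,n}}$, so that $\|v\|_{W^{1,n}} = 1$ and $\lambda = \|u\|_{W^{1,n}}^n \le C\|h\|_{\mathcal{H}^1}$. Writing $V \defeq \{J v : \|v\|_{W^{1,n}} \le 1\}$ as in \S \ref{Proof of the main result} and using $n$-homogeneity of $J$ to see that $V$ is star-shaped from the origin, this will give
\[A_C \cap B_{\HR}(0, R) \subset C R \cdot V \qquad \text{for every } R > 0.\]
Assuming for contradiction that $\overline{A_C}$ contains a ball $B(h_0, r)$ and picking $R > \|h_0\|_{\mathcal{H}^1} + r$, I will deduce $B(h_0, r) \subset C(R+1) \overline{V}$. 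Since $V$ is symmetric (reflecting one coordinate of $v$ preserves the $W^{1,n}$-norm while flipping the sign of $J v$), it will follow that $B(0, 2r) \subset 2 C(R+1) \overline{\operatorname{co}}(V)$, so that $\overline{\operatorname{co}}(V)$ contains a ball centered at the origin. By the implication (ii)$\Rightarrow$(i) of Lemma \ref{Main lemma}, this would force $\|b\|_{\text{BMO}}$ and $\sup_{\|u\|_{W^{1,n}} \le 1} \int_{\R^n} b \, J u$ to be equivalent norms on $\BMO$, directly contradicting what is established in the proof of Theorem \ref{Main theorem}.

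The density of $B_C$ will then follow easily: $\HR \setminus \overline{A_C}$ is open and dense, and intersecting with the dense set $\{J u : u \in \W\}$ yields another dense set. For any $h = J u$ in this intersection, the minimum-norm solution $\tilde{u}$ (whose existence was recalled just before the statement) must satisfy $\|\tilde{u}\|_{W^{1,n}}^n > C\|h\|_{\mathcal{H}^1}$---otherwise $h$ would already lie in $A_C \subset \overline{A_C}$---and therefore $h \in B_C$.

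The main delicate point I expect is the scaling bookkeeping: one must carefully track how the constraint $\|u\|_{W^{1,n}}^n \le C\|J u\|_{\mathcal{H}^1}$ interacts with the rescaling $u \mapsto u/\|u\|_{W^{1,n}}$ to obtain $A_C \cap B_{\HR}(0, R) \subset C R \cdot V$, and then how symmetry together with the inclusion $V - V \subset 2 \operatorname{co}(V)$ relates $\overline{V}$ to $\overline{\operatorname{co}}(V)$ with the correct constants. Once this is in place, the appeals to Lemma \ref{Main lemma}, Theorem \ref{Main theorem}, and \cite{CDK09, Kne12} are routine.
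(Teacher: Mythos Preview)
Your proposal is correct and follows essentially the same route as the paper's proof: both reduce the nowhere-density claim to showing that $\overline{\operatorname{co}}(V)$ (with $V=\{Ju:\|u\|_{W^{1,n}}\le 1\}$) cannot contain a ball centered at the origin, and both deduce density of $B_C$ from the density of $J(\W)$ combined with the first part. The only cosmetic difference is that the paper invokes implication (ii)$\Rightarrow$(iv) of Lemma~\ref{Main lemma} to contradict the \emph{statement} of Theorem~\ref{Main theorem}, whereas you invoke (ii)$\Rightarrow$(i) and appeal to the non-equivalence of norms established in its \emph{proof}; either is fine.
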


\begin{proof}
We use a proof by contradiction to deduce the first statement. Suppose $\overline{\{J u \colon \|u\|_{W^{1,n}}^n \le C \|J u\|_{\mathcal{H}^1}}\}$ contains a ball. Then, for some $M > 0$, the bounded, symmetric set
\[V \defeq \overline{\{\pm M J u \colon \|u\|_{W^{1,n}}^n \le 1\}} \supset \overline{\{J u \colon \|u\|_{W^{1,n}}^n \le C \|J u\|_{\mathcal{H}^1} \le M}\}\]
contains a ball. It follows that $\overline{\text{co}}(V)$ contains a ball centered at the origin. By Lemma \ref{Main lemma}, $\{\sum_{j=1}^\infty M \lambda^j J u^j \colon \|u^j\|_{W^{1,n}} \le 1 \text{ for every } j \in \N, \; \sum_{j=1}^\infty |\lambda^j| < \infty\}$ is not of the first category, contradicting Theorem \ref{Main theorem}.

For the second claim recall that $J(\W)$ is dense in $\HR$. The claim follows from the first part of the proposition.
\end{proof}

Note that the proofs of Theorem \ref{Main theorem} and Proposition \ref{Proposition on failure of norm control} only use (implicitly) the infinity behavior of mappings in $\W$ and do not put restraints on the local behavior of solutions of the Jacobian equation. In particular, the arguments do not apply in bounded domains or to mappings in $\dot{W}^{1,n}(\R^n,\R^n)$.

\section{Theorem \ref{Bilinear theorem} and its variants} \label{Proof of Theorem}

The equivalence between conditions (i)--(ii) of Theorem \ref{Bilinear theorem} is proved in the case $A_1 = 0$ in \textsection \ref{Proof of equivalence (i) (ii)}, and the case $A_1 \in \{\dive,\curl\}$ is covered in \textsection \ref{The case A1 in div, curl}. The nonsurjectivity claim of Theorem \ref{Bilinear theorem} is shown in \textsection \ref{Nonsurjectivity}. In \textsection \ref{A further equivalent condition in the case A1 = A2 = 0} we present another equivalent condition in the case $A_1 = A_2 = 0$, and an analogue of Theorem \ref{Bilinear theorem} is discussed in \textsection \ref{A variant for homogeneous Sobolev spaces}.

\subsection{Proof of equivalence (i) $\Leftrightarrow$ (ii) when $A_1 = 0$} \label{Proof of equivalence (i) (ii)}
We start by explaining the main ideas behind the proof of implication (i) $\Rightarrow$ (ii) when $A_1 = 0$. The converse is, of course, classical. The argument is based loosely on P. Strzelecki's proof of ~\cite[Theorem 2.1]{Str01} (Corollary \ref{Cor Strzelecki} in this article).

We fix $x \in \R^n$ and $t > 0$ aspiring to estimate
\[\eta_t * \B(u,v)(x) = \sum_{i=1}^{m_1} \sum_{j=1}^{m_2} \sum_{|\alpha| \le k_1, |\beta| \le k_2} c_{i,j,\alpha,\beta} \int_{\R^n} \eta_t(x-\cdot) \partial^\alpha u_i \partial^\beta v_j.\]
As in e.g. ~\cite{CLMS93} and ~\cite{Str01}, given $i,j,\alpha,\beta$ we wish to move some orders of differentiation from $u_i$ onto the test function $\eta_t(x-\cdot)$. To this end, we use the Leibniz rule to write $\partial^\alpha (\eta_t(x-\cdot) u_i) = \sum_{\gamma \le \alpha} {\alpha \choose \gamma} \partial^{\alpha-\gamma} [\eta_t(x-\cdot)] \partial^\gamma u_i$ and assumption (ii) to get $\int_{\R^n} \B(\eta_t(x-\cdot) u,v) = 0$. In the estimation of the integrals $\int_{\R^n} \partial^{\alpha-\gamma} \eta_t(x-\cdot) \partial^\gamma u_i \partial^\beta v_j$ we want to use the higher order Poincar\'{e} inequality, Lemma \ref{Higher order Poincare lemma}, on the factors $\partial^\gamma u_i$ in order to cancel the negative powers of $t$ that the differentiation of $\eta_t(x-\cdot)$ creates. In order for the Poincar\'{e} inequality to be applicable we need to subtract a suitable polynomial from $u$ before using the Leibniz rule.

At this point the obstacle is that even though given any polynomial $P \colon \R^n \to \R^{m_1}$ we have $\int_{\R^n} \B(\eta_t(x-\cdot) (u-P),v) = 0$, it is not guaranteed that the use of the Leibniz rule
\[\partial^\alpha [\eta_t(x-\cdot) (u_i-P_i)] = \sum_{\gamma \le \alpha} {\alpha \choose \gamma} \partial^{\alpha-\gamma} [\eta_t(x-\cdot)] \partial^\gamma (u_i-P_i)\]
is successful in every term. Indeed, if $|\alpha| > \deg P_i + 1$, then $P_i$ is in general unsuitable for use in the Poincar\'{e} inequality when $|\gamma|=|\alpha|-1$, since $\partial^\gamma P_i = 0$. On the other hand, if $|\alpha| \le \deg P_i$, then the term with $\gamma = \alpha$ produces unwanted extra terms.

As a consequence, we first need to manipulate $\B$ in such a way that $|\alpha|$ can be taken to be the same in every term of $\B$ (and the degree of $P$ is $|\alpha|-1$). We accomplish this in 
Lemmas \ref{B decomposition lemma} and \ref{Step lemma}, and thereby the higher order Poincar\'{e} inequality becomes available.

\begin{lemma} \label{B decomposition lemma}
Suppose $\B$ satisfies condition (i) of Theorem \ref{Bilinear theorem}. Then, for every $l \le k_1+k_2$ and every $(u,v) \in W^{k_1,p_1}(\R^n,\R^{m_1}) \times W_{A_2}^{k_2,p_2}(\R^n,\R^{m_2})$, the integral of
\begin{equation} \label{Decomposition of derivatives}
\B_l(u,v) \defeq \sum_{i,j} \sum_{|\alpha + \beta| = l} c_{i,j,\alpha,\beta} \partial^\alpha u_i \partial^\beta v_j
\end{equation}
vanishes. Furthermore, $\B_0 = 0$.
\end{lemma}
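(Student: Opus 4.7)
The plan is to exploit the mismatch between how Sobolev-type constraints and the vanishing integral condition transform under dilations: the integral identity $\int \B(u,v)=0$ is stable under scaling (via a change of variables), while individual terms $\partial^\alpha u_i\,\partial^\beta v_j$ pick up different powers of the scaling parameter depending on $|\alpha|+|\beta|$. This will separate the condition into one equation per total order.

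Concretely, for $\lambda>0$ set $u_\lambda(x)\defeq u(\lambda x)$ and $v_\lambda(x)\defeq v(\lambda x)$. Since $A_2$ is constant-coefficient and homogeneous of some order $q_2$, we have $A_2 v_\lambda(x)=\lambda^{q_2}(A_2 v)(\lambda x)=0$, so $v_\lambda\in W^{k_2,p_2}_{A_2}(\R^n,\R^{m_2})$, and an analogous Sobolev-norm computation shows $u_\lambda\in W^{k_1,p_1}(\R^n,\R^{m_1})$. Hypothesis (i) therefore applies to $(u_\lambda,v_\lambda)$. Using $\partial^\alpha u_\lambda=\lambda^{|\alpha|}(\partial^\alpha u)(\lambda\cdot)$ and the change of variables $y=\lambda x$, one obtains
\[
0=\int_{\R^n}\B(u_\lambda,v_\lambda)\,dx=\sum_{l=0}^{k_1+k_2}\lambda^{l-n}\int_{\R^n}\B_l(u,v)\,dy.
\]
Multiplying by $\lambda^{n}$ gives a polynomial in $\lambda$ which vanishes for all $\lambda>0$, so each coefficient vanishes:
\[
\int_{\R^n}\B_l(u,v)=0\qquad(0\le l\le k_1+k_2).
\]

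It remains to upgrade the case $l=0$ to the pointwise identity $\B_0=0$. Here one uses crucially that $A_1=0$, so the full space $W^{k_1,p_1}(\R^n,\R^{m_1})$ is available: testing with $u=\varphi\,e_{i_0}$ for an arbitrary $\varphi\in C_c^\infty(\R^n)$ and each standard basis vector $e_{i_0}$ turns $\int\B_0(u,v)=0$ into
\[
\int_{\R^n}\varphi(x)\sum_{j=1}^{m_2}c_{i_0,j,0,0}\,v_j(x)\,dx=0,
\]
whence $\sum_{j}c_{i_0,j,0,0}v_j\equiv 0$ a.e.\ for every $v\in W^{k_2,p_2}_{A_2}(\R^n,\R^{m_2})$. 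Consequently $\B_0(u,v)(x)=\sum_i u_i(x)\sum_j c_{i,j,0,0}v_j(x)=0$ pointwise on the whole domain of $\B$.

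The only place one must pay attention is making sure the rescaled pair stays in the correct constrained space, which is why the homogeneity of $A_2$ is essential; beyond that, the argument is a one-line polynomial-in-$\lambda$ identification together with a localization via $A_1=0$. I expect no genuine obstacle: the same scaling would fail if $A_2$ were inhomogeneous, but the hypotheses of the theorem explicitly rule that out.
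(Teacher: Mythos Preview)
Your proof is correct and follows essentially the same approach as the paper: both use dilations $(u(\lambda\cdot),v(\lambda\cdot))$ together with the homogeneity of $A_2$ to reduce $\int\B=0$ to a polynomial identity in the scaling parameter, and then exploit $A_1=0$ to localize the $l=0$ case and conclude $\B_0=0$ pointwise. The only difference is cosmetic---your testing with $u=\varphi e_{i_0}$ makes explicit what the paper leaves as a one-line conclusion.
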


\begin{proof}
Let $(u,v) \in W^{k_1,p_1}(\R^n,\R^{m_1}) \times W^{k_2,p_2}_{A_2}(\R^n,\R^{m_2})$. When $\tau > 0$, note that $A_2[(v(\tau \cdot)] = \tau^q A_2 v(\tau \cdot) = 0$. Thus, by the assumption on $\B$ and a change of variables,
\[0 = \int_{\R^n} \B(u(\tau \cdot),v(\tau \cdot)) = \tau^{-n} \sum_{l=0}^{k_1+k_2} \tau^l \int_{\R^n} \B_l(u,v).\]
By letting $\tau$ vary we get $\int_{\R^n} \B_l(u,v) = 0$ for every $l$.

Since the integral $\int_{\R^n} \B_0(u,v) = \sum_{i,j} c_{i,j,0,0} \int_{\R^n} u_i v_j$ vanishes for every $(u,v) \in W^{k_1,p_1}(\R^n,\R^{m_1}) \times W_{A_2}^{k_2,p_2}(\R^n,\R^{m_2})$, we conclude that $\sum_j c_{i,j,0,0} v_j = 0$ for every $i \in \{1,\dots,m_1\}$ and every $v \in W_{A_2}^{k_2,p_2}(\R^n,\R^{m_2})$, and the claim $\B_0 = 0$ follows.
\end{proof}

We now use \eqref{Decomposition of derivatives} to write $\B = \sum_{l=1}^{k_1+k_2} \B_l$ and treat the operators $\B_l$ separately. The following lemma allows us to further decompose the operators $\B_l$ so that in each new operator the value $|\alpha|$ is the same in every term.

\begin{lemma} \label{Step lemma}
Suppose $\B$ satisfies condition (i) of Theorem \ref{Bilinear theorem}. Then we may write $\B = \sum_{l_1=1}^{k_1} \sum_{l_2=0}^{k_2} \B_{l_1,l_2} + R$, where
for every $l_1 \in \{1,\dots,k_1\}$, $l_2 \in \{0,\dots,k_2\}$, and $(u,v) \in W^{k_1,p_1}(\R^n,\R^{m_1}) \times W_{A_2}^{k_2,p_2}(\R^n,\R^{m_2})$ the bilinear quantity
\begin{equation} \label{B l1 l2}
\B_{l_1,l_2}(u,v) \defeq \sum_{i,j} \sum_{|\alpha|=l_1} \sum_{|\beta| = l_2} \tilde{c}_{i,j,\alpha,\beta} \partial^\alpha u_i \partial^\beta v_j
\end{equation}
satisfies $\int_{\R^n} \B_{l_1,l_2}(u,v) = 0$ and $\|R(u,v)\|_{\mathcal{H}^1} \lesssim \|u\|_{W^{k_1,p_1}} \|v\|_
{W^{k_2,p_2}}$. Furthermore, if $c_{i,j,0,\beta} = 0$ for all $i,j,\beta$, then $\|R(u,v)\|_{\mathcal{H}^1} \lesssim \sum_{l=1}^{k_1} \|\nabla^l u\|_{L^{p_1}} \|v\|_{W^{k_2,p_2}}$.
\end{lemma}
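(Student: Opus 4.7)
My plan is to apply Lemma~\ref{B decomposition lemma} first to reduce to the pieces $\B_l$ for $l \ge 1$, each satisfying $\int_{\R^n}\B_l(u,v) = 0$, and then within each fixed $l$ to use integration by parts to redistribute every term to a single target bidegree $(l_1^*, l_2^*)$ with $l_1^* \in \{1, \ldots, k_1\}$ and $l_2^* = l - l_1^* \in \{0, \ldots, k_2\}$; such a target always exists, e.g.\ $l_1^* \defeq \max(1, l - k_2)$. The divergences produced along the way are collected into $R_l$. In the final decomposition I would set $\B_{l_1, l_2} \defeq 0$ for $(l_1, l_2) \ne (l_1^*, l_2^*)$ and let $\B_{l_1^*, l_2^*}$ carry the whole of $\B_l$ modulo $R_l$.

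The rearrangement uses two Leibniz-type identities, with $r$ chosen so the relevant component of $\alpha$ or $\beta$ is at least one:
\[
    \partial^\alpha u_i\,\partial^\beta v_j = -\partial^{\alpha-e_r} u_i\,\partial^{\beta+e_r} v_j + \partial_r\bigl(\partial^{\alpha-e_r} u_i\,\partial^\beta v_j\bigr),
\]
\[
    \partial^\alpha u_i\,\partial^\beta v_j = -\partial^{\alpha+e_r} u_i\,\partial^{\beta-e_r} v_j + \partial_r\bigl(\partial^\alpha u_i\,\partial^{\beta-e_r} v_j\bigr).
\]
The first lowers $|\alpha|$ and the second raises it, so composing them one reaches $(l_1^*, l_2^*)$ from any initial $(l_1, l-l_1)$ while keeping both indices in their admissible ranges. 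Aggregating, $\B_l = \B_{l_1^*, l_2^*} + R_l$ with $R_l$ a finite sum of divergences $\partial_r F$, where each $F = \partial^{\alpha'} u_i\,\partial^{\beta'} v_j$ has total differential order at most $l-1$. The zero-integral condition comes for free: $\int_{\R^n} \B_l = 0$ by Lemma~\ref{B decomposition lemma}, and $\int_{\R^n} R_l = 0$ because each product $F$ lies in $W^{1,1}(\R^n)$ by H\"older, so its distributional divergence integrates to zero (justified by $C_c^\infty$-approximation of $u$ and $v$). Hence $\int_{\R^n} \B_{l_1^*, l_2^*}(u,v) = 0$; the remaining $\B_{l_1, l_2}$ vanish identically.

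The delicate step is the $\mathcal{H}^1$ bound on $R = \sum_l R_l$. I would write $\partial_r F * \eta_t(x) = -\int F(y)\,\partial_r\eta_t(x-y)\,dy$ and, following the outline preceding Lemma~\ref{B decomposition lemma}, apply Lemma~\ref{Higher order Poincare lemma} on the cube $Q$ of side $\approx t$ centered at $x$: subtract from $u_i$ (respectively $v_j$) the polynomial $P^{k_1-1}_Q u_i$ (respectively $P^{k_2-1}_Q v_j$) to the extent allowed by the derivative orders $|\alpha'|, |\beta'|$ and by the vanishing moments of $\partial_r\eta_t(x-\cdot)$. Poincar\'e then trades the $t^{-1}$ loss from $\partial_r\eta_t$ for gains of $l(Q)$ that match the missing derivatives, so that $\sup_t|\partial_r F * \eta_t|(x)$ is controlled by a product of (fractional) Hardy--Littlewood maximal functions whose $L^1$-norm is bounded by $\|u\|_{W^{k_1,p_1}}\|v\|_{W^{k_2,p_2}}$ via H\"older. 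The principal obstacle is the combinatorial bookkeeping through the $O(l)$ IBP stages and ensuring that the Sobolev--Poincar\'e exponents balance cleanly at each polynomial subtraction. For the ``furthermore'' clause: vanishing of $c_{i,j,0,\beta}$ rules out original terms with $|\alpha|=0$, so every IBP stage begins with $|\alpha| \ge 1$, every divergence in $R$ carries $\partial^{\alpha'} u_i$ with $|\alpha'| \ge 1$, the subtraction $P^{k_1-1}_Q u_i$ is never applied to $u_i$ itself, and the bound sharpens to $\|R(u,v)\|_{\mathcal{H}^1} \lesssim \sum_{l=1}^{k_1} \|\nabla^l u\|_{L^{p_1}}\|v\|_{W^{k_2,p_2}}$.
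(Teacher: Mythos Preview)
Your approach is essentially the same as the paper's, and it is correct. Both proofs start from Lemma~\ref{B decomposition lemma}, rearrange each $\B_l$ to a common bidegree in $|\alpha|$ by the Leibniz identities you wrote down, and put the resulting total derivatives into $R$. The paper is a bit leaner in two respects.

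First, the paper isolates a single auxiliary estimate: for $|\gamma|<k_1$ and $|\theta|<k_2$,
\[
\|\partial_k(\partial^\gamma u_i\,\partial^\theta v_j)\|_{\mathcal{H}^1}\lesssim \|u_i\|_{W^{k_1,p_1}}\|v_j\|_{W^{k_2,p_2}},
\]
and then only uses your \emph{second} identity (raising $|\alpha|$), so that every remainder term automatically satisfies both strict inequalities $|\gamma|<k_1$, $|\theta|<k_2$. Your two-sided scheme also produces remainder factors with both orders strictly below $k_1$ and $k_2$ (this is worth checking explicitly rather than leaving to ``bookkeeping''), so the same estimate applies, but you do not need both directions.

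Second, your description of the $\mathcal{H}^1$ bound is more elaborate than necessary. Since each $F=\partial^{\alpha'}u_i\,\partial^{\beta'}v_j$ already has $|\alpha'|<k_1$ and $|\beta'|<k_2$, a single first-order Poincar\'e subtraction (means, not the higher polynomials $P^{k_1-1}_Q u_i$) together with the one vanishing moment of $\partial_r\eta_t$ suffices to cancel the $t^{-1}$; the paper writes
\[
\partial^{\alpha'}u_i\,\partial^{\beta'}v_j=(\partial^{\alpha'}u_i-(\partial^{\alpha'}u_i)_Q)\partial^{\beta'}v_j+(\partial^{\alpha'}u_i)_Q(\partial^{\beta'}v_j-(\partial^{\beta'}v_j)_Q)+\text{const}
\]
and applies Poincar\'e once to each bracket. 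Your ``furthermore'' argument is fine: with $c_{i,j,0,\beta}=0$ every intermediate main term keeps $|\alpha|\ge 1$, hence every $|\alpha'|\ge 1$, and the $u$-side of the bound only sees $\nabla^l u$ with $l\ge 1$.
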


\begin{proof}
The idea of the proof is to show the following estimate: when $i \in \{1,\dots,m_1\}$, $j \in \{1,\dots,m_2\}$, $k \in \{1,\dots,n\}$, $|\gamma| < k_1$ and $|\theta| < k_2$, we have
\begin{equation} \label{Step estimate}
\|\partial_k(\partial^\gamma u_i \partial^\theta v_j)\|_{\mathcal{H}^1} \lesssim \|u_i\|_{W^{k_1,p_1}} \|v_j\|_{W^{k_2,p_2}}.
\end{equation}
Before proving \eqref{Step estimate} we note that if the sum defining $\B_l$ in \eqref{Decomposition of derivatives} contains terms $c_{i,j,\alpha,\beta} \partial^\alpha u_i \partial^\beta v_j$ and $c_{i,j,\alpha',\beta'} \partial^{\alpha'} u_i \partial^{\beta'} v_j$ with $|\alpha+\beta| = |\alpha'+\beta'|$ and $|\alpha| < |\alpha'|$, we may use \eqref{Step estimate} to replace $c_{i,j,\alpha,\beta} \partial^\alpha u_i \partial^\beta v_j$ by $-c_{i,j,\alpha,\beta} \partial^{\alpha+e_k} u_i \, \partial^{\beta-e_k} v_j$ for some $k \in \{1,\dots,n\}$ (and add $c_{i,j,\alpha,\beta} \partial_k (\partial^\alpha u_i \partial^{\beta-e_k} v_j)$ to $R$). The claim then follows by induction.

In order to prove \eqref{Step estimate} let $x \in \R^n$ and $t > 0$. Choose a cube $Q \supset B(x,t)$ with $|Q| \lesssim_n t^n$. We fix $\epsilon \in ]0,p_2-1[$ choosing $\epsilon$ such that $1^* < (p_2-\epsilon)'$ (and furthermore $(p_2-\epsilon)' < p_1^*$ if $p_1 < n$) and define $\delta > 0$ by $(p_1-\delta)^* = (p_2-\epsilon)'$. An integration by parts and H\"{o}lder's and Poincar\'{e}'s inequalities yield
\[\begin{array}{lcl}
& &   \ds \int_{\R^n} \eta_t(x-\cdot) \partial_k (\partial^\gamma u_i \partial^\theta v_j) \\
&=&   \ds - \int_Q \partial_k[\eta_t(x-\cdot)] [(\partial^\gamma u_i - (\partial^\gamma u_i)_Q) \partial^\theta v_j + (\partial^\gamma u_i)_Q (\partial^\theta v_j - (\partial^\theta v_j)_Q)] \\
&\lesssim& \ds \frac{1}{t} \left( \Aint_Q |\partial^\gamma u_i  - (\partial^\gamma u_i)_Q|^{(p_2-\epsilon)'} \right)^\frac{1}{(p_2-\epsilon)'} \left( \Aint_Q |\partial^\theta v_j|^{p_2-\epsilon} \right)^\frac{1}{p_2-\epsilon} \\
&+&        \ds \frac{1}{t} |(\partial^\gamma u_i)_Q| \Aint_Q |(\partial^\theta v_j - (\partial^\theta v_j)_Q)| \\
&\lesssim& \ds \sum_{|\alpha| \le k_1}  M_{p_1-\delta} \left( |\partial^\alpha u_i| \right)(x) \sum_{|\beta| \le k_2} M_{p_2-\epsilon} \left( |\partial^\beta v_j| \right)(x).
\end{array}\]
Now \eqref{Step estimate} follows by using H\"{o}lder's inequality and the Hardy-Littlewood Maximal Theorem.
\end{proof}

The proof of direction (i) $\Rightarrow$ (ii) in the case $A_1 = 0$ will be completed by applying the following lemma. Note that Lemma \ref{Lemma for i to ii} readily generalizes to multilinear operators; this fact is recorded in Theorem \ref{Homogeneous bilinear theorem}. In the formulation of Lemma \ref{Lemma for i to ii} the assumptions and conclusion are in mismatch in order for the lemma to be usable in \textsection \ref{The case A1 in div, curl}.

\begin{lemma} \label{Lemma for i to ii}
Suppose $l_1 \in \{1,\dots,k_1\}$, $l_2 \in \{0,\dots,k_2\}$ and $\B_{l_1,l_2}$ is of the form \eqref{B l1 l2}. If $\int_{\R^n} \B_{l_1,l_2}(u,v) = 0$ for every $(u,v) \in W^{k_1,p_1}(\R^n,\R^{m_1}) \times W^{k_2,p_2}_{A_2}(\R^n,\R^{m_2})$, then $\|\B_{l_1,l_2}(u,v)\|_{\mathcal{H}^1} \lesssim  \|\nabla^{l_1} u\|_{L^{p_1}} \|\nabla^{l_2} v\|_{L^{p_2}}$ for every $(u,v) \in \dot{W}^{l_1,p_1}(\R^n,\R^{m_1}) \times W^{k_2,p_2}_{A_2}(\R^n,\R^{m_2})$.
\end{lemma}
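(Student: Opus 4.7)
The plan is to bound the grand maximal function $\sup_{t>0}|\eta_t \ast \B_{l_1,l_2}(u,v)|$ pointwise by a product of shifted Hardy--Littlewood maximal functions of $\nabla^{l_1}u$ and $\nabla^{l_2}v$, in the spirit of the Strzelecki-type argument already used in Lemma \ref{Step lemma}. Since $C_c^\infty(\R^n,\R^{m_1})$ is dense in $\dot{W}^{l_1,p_1}(\R^n,\R^{m_1})$ and is contained in $W^{k_1,p_1}(\R^n,\R^{m_1})$, by approximation I may assume $u \in C_c^\infty(\R^n,\R^{m_1})$; the hypothesis then gives the vanishing integral for every compactly supported smooth perturbation of $u$, and the final estimate transfers to $\dot{W}^{l_1,p_1}$ by continuity.

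Fix $x \in \R^n$ and $t>0$, pick a cube $Q$ centered at $x$ with $l(Q) \approx t$ that contains the support of $\eta_t(x-\cdot)$, and let $P \defeq P^{l_1-1}_Q u$ denote the Poincar\'{e} polynomial of Lemma \ref{Higher order Poincare lemma}. Since $\deg P_i \le l_1-1$, one has $\partial^\alpha P_i = 0$ whenever $|\alpha|=l_1$. The mapping $\eta_t(x-\cdot)(u-P)$ is smooth with compact support, hence belongs to $W^{k_1,p_1}(\R^n,\R^{m_1})$, so the hypothesis applied to the pair $(\eta_t(x-\cdot)(u-P),v)$ yields $\int_{\R^n}\B_{l_1,l_2}(\eta_t(x-\cdot)(u-P),v) = 0$. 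Expanding each factor $\partial^\alpha[\eta_t(x-\cdot)(u_i-P_i)]$ by the Leibniz rule and isolating the term $\gamma=\alpha$ (which contributes exactly $\eta_t(x-\cdot)\partial^\alpha u_i$ thanks to $\partial^\alpha P_i=0$), I arrive at the key identity
\[
\eta_t \ast \B_{l_1,l_2}(u,v)(x) = -\sum_{i,j}\sum_{\substack{|\alpha|=l_1\\|\beta|=l_2}}\tilde c_{i,j,\alpha,\beta}\sum_{\gamma<\alpha}\binom{\alpha}{\gamma}\int_{Q}\partial^{\alpha-\gamma}[\eta_t(x-\cdot)]\,\partial^\gamma(u_i-P_i)\,\partial^\beta v_j.
\]

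Each summand on the right is estimated as in Lemma \ref{Step lemma}: the derivative $\partial^{\alpha-\gamma}[\eta_t(x-\cdot)]$ has $L^\infty$-norm $\lesssim t^{-n-(l_1-|\gamma|)}$ and support in $Q$, and choosing $\epsilon \in \,]0,p_2-1[$ and $\delta \in \,]0,p_1-1[$ with $(p_2-\epsilon)' \le (p_1-\delta)^{*}$, H\"{o}lder's inequality followed by Lemma \ref{Higher order Poincare lemma} applied to $\partial^\gamma u_i$ at order $l_1-|\gamma|$ and exponent $p_1-\delta$ produces
\[
\left(\Aint_Q |\partial^\gamma(u_i-P_i)|^{(p_2-\epsilon)'}\right)^{\!1/(p_2-\epsilon)'} \lesssim t^{\,l_1-|\gamma|}\left(\Aint_Q |\nabla^{l_1}u_i|^{p_1-\delta}\right)^{\!1/(p_1-\delta)}.
\]
The positive and negative $t$-powers cancel, leaving
\[
\sup_{t>0}|\eta_t \ast \B_{l_1,l_2}(u,v)(x)| \lesssim \sum_{i,j} M_{p_1-\delta}(|\nabla^{l_1}u_i|)(x)\,M_{p_2-\epsilon}(|\nabla^{l_2}v_j|)(x).
\]
Integrating and applying H\"{o}lder's inequality on $\R^n$ with the conjugate pair $(p_1,p_2)$ together with the Hardy--Littlewood maximal theorem (valid since $p_1-\delta,p_2-\epsilon > 1$) gives the claimed $\mathcal{H}^1$ bound.

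The main obstacle --- and the reason Lemma \ref{Step lemma} is a prerequisite --- is that the above Leibniz identity is clean precisely because $|\alpha|=l_1$ in \emph{every} term: only then does the choice $\deg P = l_1-1$ simultaneously annihilate the $\gamma=\alpha$ contribution of $\partial^\alpha P_i$ and leave the lower-order pieces $\partial^\gamma(u_i-P_i)$ with $|\gamma|<l_1$ in the regime where Poincar\'{e}'s inequality produces exactly the factor $t^{l_1-|\gamma|}$ needed to offset the negative $t$-powers from the differentiation of $\eta_t$. Absent this uniformity one either loses the cancellation of the main term or loses the $t$-homogeneity required for an admissible maximal bound.
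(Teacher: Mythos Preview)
Your argument is correct and follows essentially the same route as the paper's proof: apply the vanishing-integral hypothesis to $\eta_t(x-\cdot)(u-P^{l_1-1}_Q u)$, expand by the Leibniz rule, and estimate each surviving term via H\"older and the higher-order Poincar\'e inequality to obtain a pointwise bound by a product of shifted maximal functions. The only cosmetic difference is that you make the density reduction to $u\in C_c^\infty$ explicit, whereas the paper applies the hypothesis directly to $\eta_t(x-\cdot)(u-P^{l_1-1}_Q u)$ (tacitly using that $\B_{l_1,l_2}$ depends only on $\nabla^{l_1}u$, so the vanishing extends from $W^{k_1,p_1}$ to $W^{l_1,p_1}$ by continuity).
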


\begin{proof}
Let $(u,v) \in \dot{W}^{l_1,p_1}(\R^n,\R^{m_1}) \times W_{A_2}^{k_2,p_2}(\R^n,\R^{m_2})$ and fix $x \in \R^n$ and $t > 0$. Select a cube $Q \supset B(x,t)$ with $|Q| \lesssim_n |B(x,t)|$ and use Lemma \ref{Higher order Poincare lemma} to select a polynomial $P^{l_1-1}_Q$. By assumption $\int_{\R^n} \B_{l_1,l_2}(\eta_t (u-P^{l_1-1}_Q u),v) = 0$, and so the Leibniz rule gives
\[\begin{array}{lcl}
& & \eta_t * \B_{l_1,l_2}(u,v)(x) \\
&=& \ds \sum_{i,j} \sum_{|\alpha|=l_1,|\beta|=l_2} \sum_{\gamma < \alpha} c_{i,j,\alpha,\beta,\gamma} \int_{\R^n} \partial^{\alpha-\gamma} [\eta_t(x-\cdot)] \partial^\gamma (u_i- P^{l_1-1}_Q u_i) \partial^\beta v_j.
  \end{array}\]
With $i,j,\alpha,\beta$ and $\gamma$ fixed, we choose again $\epsilon \in ]0,p_2-1[$ such that $1^* < (p_2-\epsilon)'$ (and $(p_2-\epsilon)' < p_1^*$ if $p_1 < n$) and define $\delta > 0$ by $(p_1-\delta)^* = (p_2-\epsilon_2)'$. By H\"{o}lder's inequality and Lemma \ref{Higher order Poincare lemma},
\[\begin{array}{lcl}
& &   \ds \int_{\R^n} \partial^{\alpha-\gamma} [\eta_t(x-\cdot)] \partial^\gamma (u_i- P^{l_1-1}_Q u_i) \partial^\beta v_j \\
&\lesssim& \ds t^{-|\alpha-\gamma|} \Aint_Q |\partial^\gamma (u_i - P^{l_1-1}_Q u_i) \partial^\beta v_j| \\
&\lesssim& \ds t^{-|\alpha-\gamma|} \left( \Aint_Q |\partial^\gamma (u_i - P^{l_1-1}_Q u_i)|^{(p_2-\epsilon)'} \right)^\frac{1}{(p_2-\epsilon)'} \left( \Aint_Q |\partial^\beta v_j|^{p_2-\epsilon} \right)^\frac{1}{p_2-\epsilon} \\
&\lesssim& \ds M_{p_1-\delta}(\nabla^{l_1} u_i) M_{p_2-\epsilon}(\partial^\beta v_j).
  \end{array}\]
The claimed estimate follows from H\"{o}lder's inequality and the Hardy-Littlewood Maximal Theorem.
\end{proof}

We have now showed the equivalence of conditions (i) and (ii) of Theorem \ref{Bilinear theorem} in the case $A_1 = 0$. In the proof the identity $\int_{\R^n} \B(\eta_t(x-\cdot) u, v)  = 0$ allowed us to pass partial derivatives from $u$ onto $\eta_t(x-\cdot)$ by using the Leibniz rule. Such a technique is not available when $A_1 \in \{\dive,\curl\}$, as it is not guaranteed that $\eta_t(x-\cdot) u \in W_{A_1}^{k_1,p_1}(\R^n,\R^n)$. We overcome this obstacle in \textsection \ref{The case A1 in div, curl} by using potentials for divergence and curl.

\subsection{The case $A_1 \in \{\dive,\curl\}$} \label{The case A1 in div, curl}
Direction (ii) $\Rightarrow$ (i) is classical as before, and the challenge is direction (i) $\Rightarrow$ (ii). Let $A_1 = \dive$; the case $A_1 = \curl$ is handled similarly.

Let $u \in W_{\dive}^{k_1,p_1}(\R^n,\R^n)$ and $v \in W_{A_2}^{k_2,p_2}(\R^n,\R^n)$. Using the potential $\Phi$ given by Proposition \ref{Potential proposition} and identifying $\R ^{n \times n}_\Delta$ with $\R^{n(n-1)/2}$ we write
\[\begin{array}{lcl}
    \B(u,v)
&=& \ds \sum_{i=1}^n \sum_{j=1}^{m_2} \sum _{|\alpha| \le k_1, |\beta| \le k_2} c_{i,j,\alpha,\beta} \partial^\alpha u_i \partial^\beta v_j \\
&=& \ds \sum_{i=1}^{n(n-1)/2} \sum_{j=1}^{m_2} \sum_{1 \le |\alpha| \le k_1+1, |\beta| \le k_2} \tilde{c}_{i,j,\alpha,\beta} \partial^\alpha \Phi_i \partial^\beta v_j \\
&\eqdef& \ds \tilde{\mathscr{B}}(\Phi,v).
  \end{array}\]
Using Lemmas \ref{Step lemma} and \ref{Lemma for i to ii} we may further decompose $\tilde{\mathscr{B}} = \sum_{l_1=1}^{k_1+1} \sum_{l_2=0}^{k_2} \tilde{\mathscr{B}}_{l_1,l_2} + R$, where
\[\tilde{\mathscr{B}}_{l_1,l_2}(\Phi,v) \defeq \sum_{i=1}^{n(n-1)/2} \sum_{j=1}^{m_2} \sum_{|\alpha| = l_1, |\beta| = l_2} \tilde{c}_{i,j,\alpha,\beta} \partial^\alpha \Phi_i \partial^\beta v_j\]
for all $l_1,l_2$ and $\|R(\Phi,v)\|_{\mathcal{H}^1} \lesssim \sum_{l=1}^{k_1} \|\nabla^l \Phi\|_{L^{p_1}} \|v\|_{W^{k_2,p_2}}$. By Lemma \ref{Lemma for i to ii} and Proposition \ref{Potential proposition 2},
\[\|\tilde{\mathscr{B}}(\Phi,v)\|_{\mathcal{H}^1} \lesssim \sum_{l=1}^{k_1} \|\nabla^l \Phi\|_{L^{p_1}} \|v\|_{W^{k_2,p_2}} \lesssim \|u\|_{W^{k_1,p_1}} \|v\|_{W^{k_2,p_2}}.\]
This completes the proof of implication (i) $\Rightarrow$ (ii) when $A_1 = \dive$.

\subsection{Nonsurjectivity} \label{Nonsurjectivity}
We assume $A_1 = \curl$, the case $A_1 = \dive$ being similar. The proof for $A_1 = 0$ is essentially a special case of the proof of Theorem \ref{Extension to homogeneous polynomials}. In the argument given below we tacitly assume that $A_1 u^l = 0$ and $A_2 v^l = 0$ for every $l \in \N$.

\begin{proof}[Completion of the proof of Theorem \ref{Bilinear theorem}]
Seeking contradiction, assume that the set
\[\begin{array}{lcl}
& & \ds \left\{ \sum_{l=1}^\infty \lambda^l \B(u^l,v^l) \colon \sup_{l \in \N} \|u^l\|_{W^{k_1,p_1}} \|v^l\|_{W^{k_2,p_2}} \le 1, \; \sum_{l=1}^\infty |\lambda^l| < \infty \right\} \\
&=& \ds \left\{ \sum_{l=1}^\infty \lambda^l \B(u^l,v^l) \colon \sup_{l \in \N} (\|u^l\|_{W^{k_1,p_1}}^2 + \|v^l\|_{W^{k_2,p_2}}^2) \le 1, \; \sum_{l=1}^\infty |\lambda^l| < \infty \right\}
  \end{array}\]
is of the second category. Set $V \defeq \left\{ \B(u,v) \colon \|u\|_{W^{k_1,p_1}}^2 + \|v\|_{W^{k_2,p_2}}^2 \le 1 \right\}$. Now condition (iv) of Lemma \ref{Main lemma} is satisfied.

As in the proof of Theorem \ref{Main theorem}, fix $b \in \CO \setminus \{0\}$. Use condition (i) of Lemma \ref{Main lemma} to select, for every $\tau > 0$, a mapping $(\tilde{u}
,\tilde{v}) = (u(\tau \cdot),v(\tau \cdot)) \in (W^{k_1,p_1}_{\curl}(\R^n,\R^n) \times W_{A_2}^{k_2,p_2}(\R^n,\R^{m_2})) \setminus \{0\}$ satisfying
\begin{equation} \label{False norm estimate}
\|b\|_{\text{BMO}} = \|b(\tau \cdot)\|_{\text{BMO}} \lesssim \frac{\displaystyle \int_{\R^n} b(\tau \cdot) \B(u(\tau \cdot),v(\tau \cdot))}{\|u(\tau \cdot)\|_{W^{k_1,p_1}}^2 + \|v(\tau \cdot)\|_{W^{k_2,p_2}}^2}.
\end{equation}
We will show that \eqref{False norm estimate} leads to a contradiction. We use Proposition \ref{Potential proposition 3} to write $u = \nabla \Psi$, where $\Psi \in \cap_{l=1}^{k_1+1} \dot{W}^{l,p_1}(\R^n)$, so that $\partial^\alpha [u_i(\tau \cdot)] = \tau^{-1} \partial^{\alpha+e_i} [\Psi(\tau \cdot)]$ for all $\alpha \in (\N \cup \{0\})^n$ and $i \in \{1,\dots,n\}$. We choose a cube $Q \supset \supp(b)$ and assume, without loss of generality, that $\int_Q \Psi = 0$.

We use the Leibniz rule (as before) and a change of variables to write
\[\begin{array}{lcl}
& &               \ds \int_{\R^n} b(\tau \cdot) \B(u(\tau \cdot),v(\tau \cdot)) \\
&=& \ds \tau^{-1} \sum_{i,j} \sum_{\alpha,\beta} c_{i,j,\alpha,\beta} \int_{\R^n} b(\tau \cdot) \partial^{\alpha+e_i} [\Psi(\tau \cdot)] \partial^\beta [v_j(\tau \cdot)] \\
&=& \displaystyle \tau^{-1} \sum_{i,j} \sum_{\alpha,\beta} \sum_{\gamma < \alpha+e_i} c_{i,j,\alpha,\beta,\gamma} \int_{\R^n} \partial^{\alpha+e_i-\gamma} [b(\tau \cdot)] \partial^\gamma [\Psi(\tau \cdot)] \partial^\beta [v_j(\tau \cdot)] \\
&=& \displaystyle \sum_{i,j} \sum_{\alpha,\beta} \sum_{\gamma < \alpha+e_i} c_{i,j,\alpha,\beta,\gamma} \tau^{-n-1} \int_{\R^n} \tau^{|\alpha+e_i|} \partial^{\alpha+e_i-\gamma} b \, \partial^\gamma \Psi \tau^{|\beta|} \partial^\beta v_j \\
&\lesssim& \displaystyle \sum_{i,j} \sum_{\alpha,\beta} \sum_{\gamma < \alpha+e_i} \tau^{-n-1} \int_{\R^n} \left| \tau^{|\alpha+e_i|} \partial^{\alpha+e_i-\gamma} b \, \partial^\gamma \Psi \tau^{|\beta|} \partial^\beta v_j \right|.
  \end{array}\]
We control single terms where $|\gamma| \ge 1$ by using H\"{o}lder's and Young's inequalities:
\[\begin{array}{lcl}
& &   \displaystyle \tau^{-n-1} \int_{\R^n} \left| \tau^{|\alpha+e_i|} \partial^{\alpha+e_i-\gamma} b \, \partial^\gamma \Psi \tau^{|\beta|} \partial^\beta v_j \right| \\
&\hspace{0.13cm} \lesssim_b& \displaystyle \tau^{-1} M \tau^{|\alpha+e_i|-\frac{n}{p_1}} \|\partial^\gamma \Psi\|_{L^{p_1}} \frac{\tau^{|\beta|-\frac{n}{p_2}}}{M} \|\partial^\beta v_j\|_{L^{p_2}} \\
&\le& \displaystyle \tau^{-2} \frac{\left( M \tau^{|\alpha+e_i|-\frac{n}{p_1}} \right)^2}{2} \|\partial^\gamma \Psi\|_{L^{p_1}}^2 + \frac{\left( {\tau^{|\beta|-\frac{n}{p_2}}} \right)^2}{2 M^2}\|\partial^\beta v_j\|_{L^{p_2}}^2.
  \end{array}\]
The terms with $\gamma = 0$ are, in turn, estimated by using the Poincar\'{e} inequality on $\Psi$ in $Q$:
\[\begin{array}{lcl}
& &   \displaystyle \tau^{-n-1} \int_{\R^n} \left| \tau^{|\alpha+e_i|} \partial^{\alpha+e_i} b \, \Psi \, \tau^{|\beta|} \partial^\beta v_j \right| \\
&\hspace{0.13cm} \lesssim_b& \displaystyle \tau^{-1} M \tau^{|\alpha+e_i|-\frac{n}{p_1}} \|\Psi\|_{L^{p_1}(Q)} \frac{\tau^{|\beta|-\frac{n}{p_2}}}{M} \|\partial^\beta v_j\|_{L^{p_2}} \\
&\hspace{0.13cm} \lesssim_b& \displaystyle \tau^{-2} \frac{\left( M \tau^{|\alpha+e_i|-\frac{n}{p_1}} \right)^2}{2} \|\nabla \Psi\|_{L^{p_1}(Q)}^2 + \frac{\left( {\tau^{|\beta|-\frac{n}{p_2}}} \right)^2}{2 M^2}\|\partial^\beta v_j\|_{L^{p_2}}^2.
  \end{array}\]

By combining the inequalities and using \eqref{False norm estimate} we conclude that
\[\begin{array}{lcl}
& & \displaystyle \sum_{i,j} \sum_{\alpha,\beta} \sum_{0 < \gamma < \alpha+e_i} \left( \tau^{-2} \left( \tau^{p_1 |\gamma| - n} \int_{\R^n} |\partial^\gamma \Psi|^{p_1} \right)^\frac{2}{p_1} \right. \\
&+& \ds \left. \left( \tau^{p_2 |\beta| - n} \int_{\R^n} |\partial^\beta v_j|^{p_2} \right)^\frac{2}{p_2} \right) \\
&=& \displaystyle \sum_{i,j} \sum_{\alpha,\beta} \sum_{0 < \gamma < \alpha+e_i} \left( \tau^{-2} \left( \int_{\R^n} |\partial^\gamma [\Psi(\tau \cdot)]|^{p_1} \right)^\frac{2}{p_1} \right. \\
&+& \ds \left. \left( \int_{\R^n} |\partial^\beta [v_j(\tau \cdot)]|^{p_2} \right)^\frac{2}{p_2} \right) \\
&\lesssim& \displaystyle \tau^{-2} \sum_{l=1}^{k_1+1} \|\nabla^l [\Psi(\tau \cdot)]\|_{L^{p_1}}^2 +  \|v(\tau \cdot)\|_{W^{k_2,p_2}}^2 \\
&\lesssim& \displaystyle \|u(\tau \cdot)\|_{W^{k_1,p_1}}^2 +  \|v(\tau \cdot)\|_{W^{k_2,p_2}}^2 \\
&\hspace{0.14cm} \lesssim_{b}& \displaystyle \int_{\R^n} b(\tau \cdot) \B(u(\tau \cdot),v(\tau \cdot)) \\
&\hspace{0.14cm} \lesssim_{b}& \displaystyle \sum_{i,j} \sum_{\alpha,\beta} \sum_{0 < \gamma < \alpha+e_i} \left( \tau^{-2} \left( M^{p_1} \tau^{p_1 |\alpha+e_i| - n} \int_{\R^n} |\partial^\gamma \Psi|^{p_1} \right)^\frac{2}{p_1} \right. \\
&+& \ds \left. \left( \frac{\tau^{p_2 |\beta| - n}}{M^{p_2}} \int_{\R^n} |\partial^\beta v_j|^{p_2} \right)^\frac{2}{p_2} \right).
  \end{array}\]
We get a contradiction by using the assumption $|\alpha+e_i| > |\gamma|$ for all multi-indices in the sums, choosing $M$ large enough and then choosing $\tau$ small enough.
\end{proof}

\subsection{A further equivalent condition in the case $A_1 = A_2 = 0$} \label{A further equivalent condition in the case A1 = A2 = 0}

While condition (i) of Theorem \ref{Bilinear theorem} tends to be easy to verify, it is not always completely transparent. In the following result we give, in the case $A_1 = A_2 = 0$, another equivalent condition that can be tested by simple examination of the coefficients of $\B$.

\begin{prop} \label{Coefficient proposition}
Suppose $\B$ satisfies the assumptions of Theorem \ref{Bilinear theorem} and $A_1 = A_2 = 0$. Then the following conditions are equivalent.

\renewcommand{\labelenumi}{(\roman{enumi})}
\begin{enumerate}

\item For every $\gamma \in (\N \cup \{0\})^n$ and every $i \in \{1,\dots,m_1\}$ and $j \in \{1,\dots,m_2\}$,
\begin{equation} \label{Condition on coefficients}
\sum_{\alpha+\beta=\gamma} (-1)^{|\alpha|} c_{i,j,\alpha,\beta} = 0.
\end{equation}

\item For every $(u,v) \in W^{k_1,p_1}(\R^n,\R^{m_1}) \times W^{k_2,p_2}(\R^n,\R^{m_2})$,
\[\|\B(u,v)\|_{\mathcal{H}^1} \lesssim \|u\|_{W^{k_1,p_1}} \|v\|_{W^{k_2,p_2}}.\]
\end{enumerate}
\end{prop}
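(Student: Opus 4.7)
The plan is to use Theorem \ref{Bilinear theorem} to reduce the statement to the equivalence between the integral-vanishing condition $\int_{\R^n}\B(u,v)=0$ for all admissible pairs and the coefficient identity \eqref{Condition on coefficients}, and then establish that equivalence by a direct integration-by-parts computation plus testing against well-chosen smooth functions.

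First I would record the integration-by-parts identity: for $(u,v)\in C_c^\infty(\R^n,\R^{m_1})\times C_c^\infty(\R^n,\R^{m_2})$, applying $\int_{\R^n}\partial^\alpha u_i\,\partial^\beta v_j=(-1)^{|\alpha|}\int_{\R^n}u_i\,\partial^{\alpha+\beta}v_j$ in each term of $\B$ and regrouping by $\gamma=\alpha+\beta$ yields
\begin{equation*}
\int_{\R^n}\B(u,v)=\sum_{i=1}^{m_1}\sum_{j=1}^{m_2}\sum_{\gamma}C_{i,j,\gamma}\int_{\R^n}u_i\,\partial^\gamma v_j,\qquad C_{i,j,\gamma}\defeq\sum_{\alpha+\beta=\gamma}(-1)^{|\alpha|}c_{i,j,\alpha,\beta}.
\end{equation*}
The bilinear form $(u,v)\mapsto\int_{\R^n}\B(u,v)$ is continuous on $W^{k_1,p_1}(\R^n,\R^{m_1})\times W^{k_2,p_2}(\R^n,\R^{m_2})$ by H\"{o}lder's inequality and $1/p_1+1/p_2=1$, so under \eqref{Condition on coefficients} the vanishing $\int_{\R^n}\B(u,v)=0$ extends by density from smooth pairs to all admissible $(u,v)$. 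Combined with Theorem \ref{Bilinear theorem}, this proves (i)$\Rightarrow$(ii).

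For the converse, Theorem \ref{Bilinear theorem} converts (ii) into $\int_{\R^n}\B(u,v)=0$ for every admissible $(u,v)$. Fixing indices $i_0,j_0$ and testing with $u=\varphi\, e_{i_0}$ and $v=\psi\, e_{j_0}$ for arbitrary $\varphi,\psi\in C_c^\infty(\R^n)$ reduces the identity above to
\begin{equation*}
\int_{\R^n}\varphi\,L_{i_0,j_0}\psi=0,\qquad L_{i_0,j_0}\defeq\sum_\gamma C_{i_0,j_0,\gamma}\,\partial^\gamma,
\end{equation*}
so by arbitrariness of $\varphi$ the smooth function $L_{i_0,j_0}\psi$ vanishes identically for every $\psi\in C_c^\infty(\R^n)$. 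Applying $L_{i_0,j_0}$ to $\psi(x)=\chi(x)\,e^{\xi\cdot x}$, where $\chi\in C_c^\infty(\R^n)$ equals $1$ in a neighborhood of the origin, and evaluating at $x=0$ forces the polynomial identity $\sum_\gamma C_{i_0,j_0,\gamma}\,\xi^\gamma\equiv 0$ in $\xi\in\R^n$. Hence each coefficient $C_{i_0,j_0,\gamma}$ vanishes, which is exactly \eqref{Condition on coefficients}.

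No substantive obstacle is anticipated: once Theorem \ref{Bilinear theorem} is invoked, the proposition reduces to integration by parts and an elementary test-function argument. The only mild points to check are the density extension of the integration-by-parts identity in the forward direction and the standard observation that a constant-coefficient linear differential operator annihilating $C_c^\infty(\R^n)$ must have all coefficients equal to zero in the converse.
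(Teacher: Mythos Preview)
Your proof is correct and follows essentially the same route as the paper: both directions go through Theorem~\ref{Bilinear theorem} and the integration-by-parts identity $\int_{\R^n}\B(u,v)=\sum_{i,j,\gamma}C_{i,j,\gamma}\int_{\R^n}u_i\,\partial^\gamma v_j$, with density handling the forward direction and test functions handling the converse. The only cosmetic difference is in the final step of (ii)$\Rightarrow$(i): the paper plugs in monomials $v_{j_0}=x^\gamma$ (locally) and runs a downward induction on $|\gamma|$, whereas you test against cut-off exponentials to read off the full symbol at once---both are standard ways to see that a constant-coefficient operator annihilating $C_c^\infty(\R^n)$ must vanish.
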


\begin{proof}
When (i) holds and $(u,v) \in C_c^\infty(\R^n,\R^{m_1}) \times C_c^\infty(\R^n,\R^{m_2})$, we compute
\[\begin{array}{lcl}
    \ds \int_{\R^n} \B(u,v)
&=& \ds \sum_{\gamma} \sum_{i,j} \sum_{\alpha+\beta=\gamma} c_{i,j,\alpha,\beta} \int_{\R^n} \partial^\alpha u_i \partial^\beta v_j \\
&=& \ds \sum_\gamma \sum_{i,j} \left( \int_{\R^n} u_i \partial^{\gamma} v_j \sum_{\alpha+\beta=\gamma} (-1)^{|\alpha|} c_{i,j,\alpha,\beta} \right) = 0.
  \end{array}\]
By continuity, $\int_{\R^n} \B(u,v) = 0$ for all $(u,v) \in W^{k_1,p_1}(\R^n,\R^{m_1}) \times W^{k_2,p_2}(\R^n,\R^{m_2})$, and Theorem \ref{Bilinear theorem} then implies condition (ii).

For direction (ii) $\Rightarrow$ (i) we fix $\gamma \in (\N \setminus \{0\})^n$ and let $(u,v) \in C_c^\infty(\R^n,\R^{m_1+m_2})$. By using an integration by parts we write, as above,
\[0
= \int_{\R^n} \B(u,v) \\
= \sum_i \left( \sum_j \sum_\gamma \sum_{\alpha+\beta=\gamma} (-1)^{|\alpha|} c_{i,j,\alpha,\beta} \int_{\R^n} u_i \partial^{\gamma} v_j \right).\]
When $i_0 \in \{1,\dots,m_1\}$, letting $u_{i_0} \in C_c^\infty(\R^n)$ and $u_i = 0$ for every $i \neq i_0$ we get $\sum_j \sum_\gamma \sum_{\alpha+\beta=\gamma} (-1)^{|\alpha|} c_{i_0,j,\alpha,\beta} \partial^{\gamma} v_j = 0$ for all $v \in C_c^\infty(\R^n,\R^{m_2})$. Fixing $\gamma \in (\N \cup \{0\})^n$ with $|\gamma| = k_1 + k_2$ and setting locally $v_{j_0} = x^\gamma$ and $v_j = 0$ for all $j \neq j_0$ we obtain $\sum_{\alpha+\beta=\gamma} (-1)^{|\alpha|} c_{i_0,j_0,\alpha,\beta} = 0$. Consequently,
\[\sum_i \left( \sum_j \sum_\gamma \sum_{|\alpha+\beta|<l} (-1)^{|\alpha|} c_{i,j,\alpha,\beta} \int_{\R^n} u_i \partial^{\gamma} v_j \right) = 0\]
holds for $l = k_1+k_2$. Continuing by downward induction on $l$ we obtain condition (i).
\end{proof}

\subsection{A variant for homogeneous Sobolev spaces} \label{A variant for homogeneous Sobolev spaces}
The aim of this subsection is to give an analogue of Theorem \ref{Bilinear theorem} for homogeneous Sobolev spaces when $|\alpha|=k_1$ and $|\beta|=k_2$ in all non-vanishing terms of $\B$. This case is much easier than the one studied in Theorem \ref{Bilinear theorem} since the higher-order Poincar\'{e} inequality is readily available. Furthermore, multilinear operators are essentially as easy to handle as bilinear ones since Lemma \ref{Step lemma} is not needed. We note in passing that terms satisfying $|\alpha| < k_1$ or $|\beta| < k_2$ can in some cases be treated in homogeneous Sobolev spaces by using Sobolev embeddings, but we do not pursue the matter here.

We consider multilinear operators $\M \colon \prod_{j=1}^r \dot{W}_{A_j}^{k_j,p_j}(\R^n,\R^{m_j}) \to L^1(\R^n)$ of the form
\begin{equation} \label{Form of a bilinear operator 2}
\mathcal{M}(u^1,\dots,u^r) = \sum_{\substack{i_1 \in \{1,\dots,m_1\} \\ \vdots \\ i_r \in \{1,\dots,m_r\}}} \sum_{\substack{|\alpha_1| = k_1 \\ \vdots \\ |\alpha^r| = k_r}} c_{i_1,\dots,i_r,\alpha^1,\dots,\alpha^r} \prod_{j=1}^r \partial^{\alpha^j} u^j_{i_j},
\end{equation}
where
\begin{equation} \label{Conditions on exponents 2}
k_j \ge 0, \; n \ge 2, \; m_j \ge 1, \text{ and } p_j \in ]1,\infty[ \text{ with  } \frac{1}{p_1} + \cdots + \frac{1}{p_r} = 1.
\end{equation}

The case $A_1 = \cdots = A_r = 0$ of the following proposition follows e.g. from ~\cite[Theorem 1]{Gra92} and the whole result seems to follow by using the ideas of ~\cite{Gra92} and the potentials for divergence and curl constructed in \textsection \ref{Potential for divergence and curl}. However, we feel that the explicit statement and fairly elementary proof of Theorem \ref{Homogeneous bilinear theorem} bring further unity and clarity to the subject.

\begin{theorem} \label{Homogeneous bilinear theorem}
Assume that \eqref{Form of a bilinear operator 2} and \eqref{Conditions on exponents 2} hold and $A_1 \in \{0,\dive,\curl\}$. The following conditions are equivalent.
\renewcommand{\labelenumi}{(\roman{enumi})}
\begin{enumerate}
\item For every $(u^1,\dots,u^r) \in \Pi_{j=1}^r \dot{W}_A^{k_j,p_j}(\R^n,\R^{m_j})$,
\[\int_{\R^n} \M(u^1,\dots,u^r) = 0.\]

\item For every $(u^1,\dots,u^r) \in \Pi_{j=1}^r \dot{W}_{A_j}^{k_j,p_j}(\R^n,\R^{m_j})$,
\[\|\M(u^1,\dots,u^r)\|_{\mathcal{H}^1} \lesssim \prod_{j=1}^r \|\nabla^{k_j} u^j\|_{L^{p_j}}.\]
\end{enumerate}
\noindent If $r=2$ and $A_1=A_2 = 0$, then (i) and (ii) are equivalent to the following condition:
\renewcommand{\labelenumi}{(\roman{enumi})}
\begin{enumerate}
\setcounter{enumi}{2}
\item For every $\gamma \in (\N \cup \{0\})^n$ and every $i \in \{1,\dots,m_1\}$ and $j \in \{1,\dots,m_2\}$,
\[\sum_{\alpha+\beta=\gamma} c_{i,j,\alpha,\beta} = 0.\]
\end{enumerate}
\end{theorem}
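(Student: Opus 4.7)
The plan is to adapt the strategy of Theorem \ref{Bilinear theorem} with two simplifications. Because every surviving term of $\M$ has $|\alpha^j|=k_j$, the higher-order Poincar\'e inequality is directly applicable on the first slot without any preliminary reduction in the spirit of Lemma \ref{Step lemma}; consequently the passage from bilinear to general multilinear is purely notational. Direction (ii) $\Rightarrow$ (i) is immediate since every element of $\HR$ integrates to zero.

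For (i) $\Rightarrow$ (ii) in the unconstrained case $A_1 = 0$, I would follow Lemma \ref{Lemma for i to ii}. Fix $(u^1,\dots,u^r)$, a point $x \in \R^n$, and $t>0$; choose a cube $Q \supset B(x,t)$ with $|Q| \lesssim_n t^n$ and let $P=P_Q^{k_1-1} u^1$. Since $\eta_t(x-\cdot)(u^1-P) \in W^{k_1,p_1}(\R^n,\R^{m_1})$, hypothesis (i) gives $\int \M(\eta_t(x-\cdot)(u^1-P),u^2,\dots,u^r) = 0$. Expanding $\partial^{\alpha^1}[\eta_t(x-\cdot)(u^1-P)_{i_1}]$ via Leibniz, the $\gamma=\alpha^1$ term equals $\eta_t(x-\cdot)\partial^{\alpha^1} u^1_{i_1}$ (as $|\alpha^1|=k_1 > \deg P$) and reassembles $\eta_t * \M(u^1,\dots,u^r)(x)$. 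The remaining $\gamma < \alpha^1$ terms are controlled by H\"older's inequality combined with Lemma \ref{Higher order Poincare lemma} applied to $\partial^\gamma u^1_{i_1}$ on $Q$. Choosing small positive shifts $\delta,\epsilon_2,\dots,\epsilon_r$ so that the maximal operators $M_{p_1-\delta}$ and $M_{p_j-\epsilon_j}$ are bounded on $L^{p_j}$ and $1/(p_1-\delta)^* + \sum_{j\ge 2} 1/(p_j-\epsilon_j) = 1$ (feasible because $1/p_1^* + \sum_{j \ge 2} 1/p_j = 1-1/n < 1$, with \eqref{Poincare 2} handling the case $p_1 \ge n$), one obtains the pointwise bound
\[
|\eta_t * \M(u^1,\dots,u^r)(x)| \lesssim M_{p_1-\delta}(\nabla^{k_1} u^1)(x) \prod_{j=2}^r M_{p_j-\epsilon_j}(\nabla^{k_j} u^j)(x).
\]
Taking the supremum in $t$ and applying H\"older's inequality and the Hardy--Littlewood maximal theorem delivers (ii).

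For $A_1 \in \{\dive,\curl\}$, I would use Proposition \ref{Potential proposition} or Proposition \ref{Potential proposition 3} to write $u^1=B\Phi^1$ or $u^1=\nabla\Psi^1$ with $\|\nabla^{k_1+1}\Phi^1\|_{L^{p_1}} \approx \|\nabla^{k_1} u^1\|_{L^{p_1}}$. Substitution into \eqref{Form of a bilinear operator 2} produces a new homogeneous multilinear operator $\tilde\M$ whose first argument carries derivatives of order $k_1+1$ and is unconstrained, satisfying $\tilde\M(\Phi^1,u^2,\dots,u^r) = \M(u^1,\dots,u^r)$. Surjectivity of the potentials translates hypothesis (i) for $\M$ into $\int \tilde\M = 0$ on the full unconstrained space, so the already-proven case applied to $\tilde\M$, combined with the norm equivalence of the potentials, yields (ii) for $\M$. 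Finally, the equivalence (i) $\Leftrightarrow$ (iii) in the bilinear unconstrained case follows by integration by parts: for $(u,v) \in C_c^\infty$,
\[
\int_{\R^n} \B(u,v) = (-1)^{k_1}\sum_{i,j}\sum_\gamma \Bigl(\sum_{\alpha+\beta=\gamma}c_{i,j,\alpha,\beta}\Bigr)\int_{\R^n} u_i \, \partial^\gamma v_j,
\]
where only $|\gamma|=k_1+k_2$ contributes; varying $u_i$ freely and setting $v_j = x^\gamma$ locally as in Proposition \ref{Coefficient proposition} forces (iii), while the reverse direction is immediate.

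The main delicate point I anticipate is the exponent bookkeeping in the multilinear H\"older--Poincar\'e--Sobolev step, namely simultaneously choosing $\delta,\epsilon_2,\dots,\epsilon_r > 0$ so that the Sobolev embedding closes and all maximal operators are bounded on the relevant $L^{p_j}$. The argument proceeds exactly as in Lemma \ref{Lemma for i to ii} but with $r$ exponents to coordinate; by continuity it suffices to verify the strict inequality $1/p_1^* + \sum_{j \ge 2} 1/p_j < 1$ noted above, and then apply an intermediate-value argument.
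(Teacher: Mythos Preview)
Your proposal is correct and follows essentially the same route as the paper: the paper dispatches (i) $\Rightarrow$ (ii) for $A_1=0$ by ``a simple adaptation of the proof of Lemma \ref{Lemma for i to ii}'', handles $A_1\in\{\dive,\curl\}$ by passing to potentials as in \textsection\ref{The case A1 in div, curl}, and deduces (i) $\Leftrightarrow$ (iii) from Proposition \ref{Coefficient proposition} together with an integration by parts --- exactly the steps you outline, with your treatment of the multilinear H\"older--Poincar\'e exponent bookkeeping simply making explicit what the paper leaves to the reader. The only point the paper mentions that you do not is the degenerate case $k_1=0$, where (i) forces $\M=0$; but your argument with $P=0$ covers this as well.
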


In the case $A_1 = 0$ we note again that if $k_1 = 0$, then $\M = 0$. The case $A_1 = 0$, $k_1 \ge 1$ is proved by a simple adaptation of the proof of Lemma \ref{Lemma for i to ii}, and the case $A_1 \in \{\dive,\curl\}$ is shown as in \textsection \ref{The case A1 in div, curl}. Furthermore, when $r = 2$ and $A_1=A_2$, direction (i) $\Rightarrow$ (iii) follows from Proposition \ref{Coefficient proposition} while (iii) $\Rightarrow$ (i) checked by an integration by parts.

\begin{remark}
When $r \ge 3$, condition (iii) of Theorem \ref{Homogeneous bilinear theorem} has the following analogue: for every $\gamma \in (\N \cup \{0\})^n$, all indices $i_j \in \{1,\dots,m_j\}$ and all multi-indices $\theta^2,\dots,\theta^r$ with $\sum_{j=2}^r \theta^j = \gamma$,
\begin{equation} \label{Multilinear ugly analogue}
\sum_{\substack{\alpha^1+\cdots+\alpha^r=\gamma,\\\alpha^1+\alpha^2 \ge \theta^2,\dots,\alpha^1+\alpha^r \ge \theta^r}} c_{i_1,\dots,i_r,\alpha^1,\dots,\alpha^r} \prod_{j=2}^r {\alpha^1 \choose \theta^j - \alpha^j} = 0.
\end{equation}
Condition \eqref{Multilinear ugly analogue} appears to be far too complicated to have practical use and so we omit the somewhat cumbersome proof of its equivalence to conditions (i) and (ii) of Theorem \ref{Homogeneous bilinear theorem}.
\end{remark}

\section{Applications to multilinear quantities} \label{Applications to bilinear quantities}
We present some corollaries of Theorems \ref{Bilinear theorem} and \ref{Homogeneous bilinear theorem} and Proposition \ref{Coefficient proposition}.

\subsection{Applications to classes of operators} \label{Applications to classes of operators}
We first use Theorem \ref{Bilinear theorem} and Proposition \ref{Coefficient proposition} to treat some general classes of bilinear partial differential operators. In the first result we study the product of two linear constant-coefficient partial differential operators. We define the operators $\PP \colon W^{k_1,p_1}(\R^n,\R^{m_1}) \to L^{p_1}(\R^n)$ and $\Q \colon W^{k_2,p_2}(\R^n,\R^{m_2}) \to L^{p_2}(\R^n)$ by
\begin{equation} \label{Definition of P and Q}
\PP(u) \defeq \sum_{i=1}^{m_1} \sum_{|\alpha| \le k_1} c_{i,\alpha} \partial^\alpha u_i \quad \text{and} \quad \Q(v) \defeq \sum_{j=1}^{m_2} \sum_{|\beta| \le k_2} d_{j,\beta} \partial^\beta v_j.
\end{equation}
The estimate we present can be strengthened in an obvious manner when all the terms of $\PP$ and/or $\Q$ have the same order. The result follows directly from Proposition \ref{Coefficient proposition}.

\begin{cor} \label{Corollary on product of P and Q}
Let $\PP$ and $\Q$ be defined by \eqref{Definition of P and Q}. The following conditions are equivalent.

\renewcommand{\labelenumi}{(\roman{enumi})}
\begin{enumerate}

\item $\|\PP(u) \Q(v)\|_{\mathcal{H}^1} \lesssim \|u\|_{W^{k_1,p_1}} \|v\|_{W^{k_2,p_2}}$ for all $u \in W^{k_1,p_1}(\R^n,\R^m)$ and $v \in W^{k_2,p_2}(\R^n,\R^m)$.

\item $\sum_{\alpha+\beta=\gamma} (-1)^{|\alpha|} c_{i,\alpha} d_{j,\beta} = 0$ for all $i,j$ and all $\gamma \in (\N \cup \{0\})^n$.
\end{enumerate}
\end{cor}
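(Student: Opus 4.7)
The plan is to specialize Proposition \ref{Coefficient proposition} (the $A_1=A_2=0$ case arising from Theorem \ref{Bilinear theorem}) to the bilinear operator $\B(u,v) := \PP(u)\,\Q(v)$, and then simply read off the two equivalent conditions. There is no real new content; it is a matter of identifying $\B$ as an operator of the form \eqref{Form of a bilinear operator} whose coefficients factor as a tensor product.

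First I would expand the product to put $\B$ in the form \eqref{Form of a bilinear operator}:
\[
\PP(u)\,\Q(v) \;=\; \sum_{i=1}^{m_1}\sum_{j=1}^{m_2}\sum_{|\alpha|\le k_1,\,|\beta|\le k_2} c_{i,\alpha}\,d_{j,\beta}\,\partial^\alpha u_i\,\partial^\beta v_j,
\]
so the coefficients $c_{i,j,\alpha,\beta}$ of $\B$ in the notation of \eqref{Form of a bilinear operator} factor as $c_{i,j,\alpha,\beta}=c_{i,\alpha}\,d_{j,\beta}$. Since $1/p_1+1/p_2=1$, H\"older's inequality shows that $\B$ is a well-defined bilinear map $W^{k_1,p_1}(\R^n,\R^{m_1}) \times W^{k_2,p_2}(\R^n,\R^{m_2}) \to L^1(\R^n)$, so the exponent conditions \eqref{Conditions on exponents} are satisfied and the hypotheses of Proposition \ref{Coefficient proposition} are met.

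Plugging the factored coefficients into the coefficient identity appearing in condition (i) of Proposition \ref{Coefficient proposition} gives, for each $\gamma \in (\N \cup \{0\})^n$ and each $i,j$,
\[
\sum_{\alpha+\beta=\gamma}(-1)^{|\alpha|}\,c_{i,j,\alpha,\beta} \;=\; \sum_{\alpha+\beta=\gamma}(-1)^{|\alpha|}\,c_{i,\alpha}\,d_{j,\beta},
\]
so that condition collapses verbatim to condition (ii) of the present corollary. On the other hand, the Hardy space estimate appearing as condition (ii) of Proposition \ref{Coefficient proposition} is exactly condition (i) of the corollary. The equivalence (i)$\Leftrightarrow$(ii) asserted here is therefore a restatement of the equivalence (i)$\Leftrightarrow$(ii) of Proposition \ref{Coefficient proposition}, with no additional argument needed.

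There is no genuine obstacle; the only care required is the bookkeeping that the bilinear operator built from the two scalar linear operators $\PP$ and $\Q$ has coefficients that factor in the indicated tensor-product manner, so that the coefficient condition of the ambient proposition specializes exactly to the one stated in the corollary. In particular, the heavy lifting (Lemmas \ref{B decomposition lemma}--\ref{Lemma for i to ii} and the argument of Proposition \ref{Coefficient proposition}) is inherited as a black box and no new analysis on $\HR$ is carried out in the proof of the corollary.
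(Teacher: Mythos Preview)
Your proposal is correct and matches the paper's own approach: the paper simply states that the result follows directly from Proposition~\ref{Coefficient proposition}, which is precisely the specialization you carry out by identifying $c_{i,j,\alpha,\beta}=c_{i,\alpha}d_{j,\beta}$. No additional argument is needed.
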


In the following three corollaries we fix $n,k \in \N$, assume that $p_1,p_2 \in ]1,\infty[$ satisfy $1/p_1+1/p_2=1$ and define $\PP \colon \mathcal{D}'(\R^n) \to \mathcal{D}'(\R^n)$ by
\[\PP(u) \defeq \sum_{|\alpha| \le k} c_\alpha \partial^\alpha u.\]

\begin{cor} \label{Cor on difference of partial derivatives}
The following conditions are equivalent.

\renewcommand{\labelenumi}{(\roman{enumi})}
\begin{enumerate}

\item $\|\PP(u) v - \PP(v) u\|_{\mathcal{H}^1} \lesssim \|u\|_{W^{k,p_1}} \|v\|_{W^{k,p_2}}$ for all $u \in W^{k,p_1}(\R^n)$ and $v \in W^{k,p_2}(\R^n)$.

\item $|\alpha|$ is even in all the nonzero terms of $\PP$.
\end{enumerate}
\end{cor}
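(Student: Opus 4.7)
The plan is to view $\B(u,v) \defeq \PP(u) v - \PP(v) u$ as a bilinear partial differential operator of the form \eqref{Form of a bilinear operator} with $m_1 = m_2 = 1$, $k_1 = k_2 = k$, and $A_1 = A_2 = 0$, and then invoke Proposition \ref{Coefficient proposition}. The entire argument reduces to computing the coefficient sum \eqref{Condition on coefficients} for this particular $\B$.

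First I would identify the coefficients of $\B$. Expanding
\[\B(u,v) = \sum_{|\alpha| \le k} c_\alpha (\partial^\alpha u) v - \sum_{|\beta| \le k} c_\beta u (\partial^\beta v),\]
the coefficient $c_{\alpha,\beta}$ of $\partial^\alpha u \, \partial^\beta v$ equals $c_\alpha$ when $\beta = 0$ and $\alpha \neq 0$, equals $-c_\beta$ when $\alpha = 0$ and $\beta \neq 0$, vanishes when $\alpha = \beta = 0$ (the two contributions cancel), and vanishes in all remaining cases.

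Next I would compute the criterion \eqref{Condition on coefficients} of Proposition \ref{Coefficient proposition}. For $\gamma = 0$ the sum is trivially zero. For a fixed multi-index $\gamma \neq 0$, only the pairs $(\alpha,\beta) = (\gamma,0)$ and $(\alpha,\beta) = (0,\gamma)$ contribute, giving
\[\sum_{\alpha+\beta=\gamma} (-1)^{|\alpha|} c_{\alpha,\beta} = (-1)^{|\gamma|} c_\gamma - c_\gamma = \bigl((-1)^{|\gamma|} - 1\bigr) c_\gamma.\]
This vanishes for every $\gamma$ if and only if $c_\gamma = 0$ for every $\gamma$ with $|\gamma|$ odd, which is precisely condition (ii) of the corollary. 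By Proposition \ref{Coefficient proposition}, this coefficient condition is equivalent to the $\mathcal{H}^1$ estimate in (i), completing the proof.

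The statement is a direct consequence of Proposition \ref{Coefficient proposition}, so I would not anticipate any genuine obstacle; the only thing to be careful about is the bookkeeping at $\alpha = \beta = 0$ (where the two occurrences of $c_0 u v$ cancel) so that $c_{0,0} = 0$ and the $\gamma = 0$ case of \eqref{Condition on coefficients} is satisfied for free.
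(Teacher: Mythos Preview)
Your proof is correct and follows exactly the paper's approach: the paper also writes $\B(u,v) = \sum_{|\alpha|\le k} c_\alpha (v\,\partial^\alpha u - u\,\partial^\alpha v)$, applies Proposition \ref{Coefficient proposition}, and obtains the condition $[(-1)^{|\gamma|}-1]c_\gamma = 0$ for every nonzero $\gamma$. Your extra care with the $\gamma=0$ bookkeeping is fine but not essential, since the paper's statement of the coefficient condition already handles it.
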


\begin{cor} \label{Cor on sum of partial derivatives}
The following conditions are equivalent.

\renewcommand{\labelenumi}{(\roman{enumi})}
\begin{enumerate}

\item $\|\PP(u) v + \PP(v) u\|_{\mathcal{H}^1} \lesssim \|u\|_{W^{k,p_1}} \|v\|_{W^{k,p_2}}$ for all $u \in W^{k,p_1}(\R^n)$ and $v \in W^{k,p_2}(\R^n)$.

\item $|\alpha|$ is odd in all the nonzero terms of $\PP$.
\end{enumerate}
\end{cor}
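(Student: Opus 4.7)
The plan is to reduce the corollary to a direct application of Proposition \ref{Coefficient proposition}. First I would cast the operator
\[\B(u,v) \defeq \PP(u) v + \PP(v) u\]
in the canonical form \eqref{Form of a bilinear operator} with $m_1 = m_2 = 1$. Reading off coefficients from the two summands, for each multi-index $\alpha \ne 0$ the only non-zero coefficients are $c_{1,1,\alpha,0} = c_\alpha$ (from $\PP(u)v$) and $c_{1,1,0,\alpha} = c_\alpha$ (from $\PP(v)u$), while the two $\alpha=0$ contributions combine to $c_{1,1,0,0} = 2c_0$.

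Next I would apply the equivalence of (i) and (ii) in Proposition \ref{Coefficient proposition}: condition (i) of the corollary holds if and only if, for every $\gamma \in (\N\cup\{0\})^n$,
\[\sum_{\alpha+\beta = \gamma} (-1)^{|\alpha|} c_{1,1,\alpha,\beta} = 0.\]
Since only the pairs $(\alpha,\beta) = (\gamma,0)$ and $(0,\gamma)$ yield non-zero coefficients, this sum collapses to $2c_0$ when $\gamma = 0$ and to $\bigl(1 + (-1)^{|\gamma|}\bigr) c_\gamma$ when $\gamma \ne 0$. The vanishing of this expression for every $\gamma$ is equivalent to $c_\gamma = 0$ whenever $|\gamma|$ is even, which is precisely condition (ii).

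There is no real obstacle here; the whole content of the argument is a bookkeeping exercise on coefficients, completely parallel to the preceding Corollary \ref{Cor on difference of partial derivatives} but with the opposite parity emerging from the $+$ sign instead of the $-$ sign. The only minor care needed is to remember to combine the two $\alpha=0$ contributions into the single coefficient $c_{1,1,0,0}$ so that the $\gamma=0$ case of the test condition correctly forces $c_0 = 0$, which is consistent with the parity statement in (ii) since $|0|=0$ is even.
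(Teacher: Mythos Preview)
Your argument is correct and is exactly the approach the paper takes: apply Proposition \ref{Coefficient proposition} to $\B(u,v)=\PP(u)v+\PP(v)u$, read off that the only nonzero coefficients are $c_{1,1,\gamma,0}=c_{1,1,0,\gamma}=c_\gamma$ for $\gamma\neq 0$ and $c_{1,1,0,0}=2c_0$, and observe that condition \eqref{Condition on coefficients} reduces to $\bigl(1+(-1)^{|\gamma|}\bigr)c_\gamma=0$, i.e.\ $c_\gamma=0$ whenever $|\gamma|$ is even. The paper's proof simply remarks that it is ``almost identical'' to that of Corollary \ref{Cor on difference of partial derivatives}, and your write-up supplies precisely those details.
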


\begin{cor} \label{Cor on partial derivatives of product}
The following conditions are equivalent.

\renewcommand{\labelenumi}{(\roman{enumi})}
\begin{enumerate}

\item $\|\PP(uv)\|_{\mathcal{H}^1} \lesssim \|u\|_{W^{k,p_1}} \|v\|_{W^{k,p_2}}$ for all $u \in W^{k,p_1}(\R^n)$ and $v \in W^{k,p_2}(\R^n)$.

\item $c_0 = 0$.
\end{enumerate}
\end{cor}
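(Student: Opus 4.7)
The plan is to view $\PP(uv)$ as a scalar bilinear partial differential operator of the form \eqref{Form of a bilinear operator} (with $m_1 = m_2 = 1$, $k_1 = k_2 = k$, $A_1 = A_2 = 0$) and then invoke Proposition \ref{Coefficient proposition}. With the convention $c_\alpha \defeq 0$ whenever $|\alpha| > k$, the Leibniz rule expands
\[
\PP(uv) = \sum_{|\alpha| \le k} c_\alpha \sum_{\gamma \le \alpha} \binom{\alpha}{\gamma} \partial^\gamma u \, \partial^{\alpha-\gamma} v = \sum_{\alpha, \beta} \tilde{c}_{\alpha,\beta} \, \partial^\alpha u \, \partial^\beta v,
\]
where $\tilde{c}_{\alpha,\beta} \defeq \binom{\alpha+\beta}{\alpha} c_{\alpha+\beta}$. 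It therefore suffices to verify that condition (i) of Proposition \ref{Coefficient proposition}, applied to the coefficients $\tilde{c}_{\alpha,\beta}$, is equivalent to $c_0 = 0$.

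For each $\gamma \in (\N \cup \{0\})^n$ the relevant sum evaluates to
\[
\sum_{\alpha+\beta=\gamma} (-1)^{|\alpha|} \tilde{c}_{\alpha,\beta} = c_\gamma \sum_{\alpha \le \gamma} (-1)^{|\alpha|} \binom{\gamma}{\alpha} = c_\gamma \prod_{i=1}^n (1-1)^{\gamma_i},
\]
the last equality being the coordinatewise binomial theorem. With the convention $0^0 = 1$, this product is $1$ if $\gamma = 0$ and $0$ otherwise. Consequently the entire family of identities in Proposition \ref{Coefficient proposition}(i) collapses to the single requirement $c_0 = 0$, and Proposition \ref{Coefficient proposition} delivers the claimed equivalence.

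Since the argument is essentially bookkeeping, there is no substantial obstacle; the only point to notice is the cancellation $\sum_{\alpha \le \gamma} (-1)^{|\alpha|} \binom{\gamma}{\alpha} = 0$ for $\gamma \ne 0$, which isolates the constant term $c_0$ as the unique obstruction to $\mathcal{H}^1$ regularity of $\PP(uv)$.
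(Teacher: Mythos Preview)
Your proof is correct. You apply Proposition~\ref{Coefficient proposition} after expanding $\PP(uv)$ via the Leibniz rule and then use the binomial cancellation $\sum_{\alpha\le\gamma}(-1)^{|\alpha|}\binom{\gamma}{\alpha}=0$ for $\gamma\neq 0$ to reduce all the coefficient conditions to the single equation $c_0=0$.

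The paper takes a slightly more direct route: rather than going through the coefficient criterion of Proposition~\ref{Coefficient proposition}, it appeals straight to the integral-vanishing condition~(i) of Theorem~\ref{Bilinear theorem}, observing that $\int_{\R^n}\partial^\alpha(uv)=0$ whenever $|\alpha|\ge 1$ (an immediate consequence of integrating a total derivative). This bypasses both the Leibniz expansion and the binomial identity. Your approach, in effect, re-derives that integration-by-parts fact at the level of coefficients; it is perfectly valid but a small detour compared to the one-line verification the paper gives. Both ultimately rest on Theorem~\ref{Bilinear theorem}, since Proposition~\ref{Coefficient proposition} is itself proved from it.
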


\begin{proof}[Proof of Corollaries \ref{Cor on difference of partial derivatives}--\ref{Cor on sum of partial derivatives}]
We write
\[\B(u,v) \defeq \PP(u) v - \PP(v) u =  \sum_{|\alpha| \le k} c_\alpha (v \partial^\alpha u - u \partial^\alpha v).\]
Condition (i) of Proposition \ref{Coefficient proposition} is satisfied if and only if $[(-1)^{|\gamma|} - 1] c_\gamma = 0$ for every $\gamma \in (\N \setminus \{0\})^n$, that is, if and only if condition (ii) of Corollary \ref{Cor on difference of partial derivatives} holds. Proposition \ref{Coefficient proposition} now implies Corollary \ref{Cor on difference of partial derivatives}. The proof of Corollary \ref{Cor on sum of partial derivatives} is almost identical.
\end{proof}

\begin{proof}[Proof of Corollary \ref{Cor on partial derivatives of product}]
The direction (i) $\Rightarrow$ (ii) is obvious. For the other direction we verify condition (ii) of Theorem \ref{Bilinear theorem} by fixing $\alpha$ with $|\alpha| \ge 1$ and noting that $\int_{\R^n} \partial^\alpha (uv) = 0$ for all $(u,v) \in W^{k,p_1}(\R^n) \times W^{k,p_2}(\R^n)$.
\end{proof}

\subsection{$\mathcal{H}^1$ regularity of specific operators} \label{H1 regularity of specific operators}
In this subsection we note that Theorem \ref{Homogeneous bilinear theorem} directly implies $\mathcal{H}^1$ regularity with a natural norm estimate for many familiar compensated compactness quantities. In fact, the elementary cases $A_1 \in \{0,\curl\}$ (which do not require the use of the somewhat technical potentials for divergence-free fields) suffice in every example presented here. Almost all of the results are from the original milestone paper ~\cite{CLMS93}.

\begin{cor}[~\cite{CLMS93}] \label{Prototypical corollary}
Suppose $p, q \in ]1, \infty[$ satisfy $1/p+1/q=1$ and let $v \in L^p_{\dive}(\R^n,\R^n)$ and $u \in \dot{W}^{1,q}(\R^n)$. Then
\begin{equation} \label{Prototypical estimate}
\|v \cdot \nabla u\|_{\mathcal{H}^1} \lesssim \|v\|_{L^p} \|\nabla u\|_{L^q}.
\end{equation}
\end{cor}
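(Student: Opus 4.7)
The plan is to recognize Corollary \ref{Prototypical corollary} as a direct special case of Theorem \ref{Homogeneous bilinear theorem}. Set $r=2$, $m_1 = n$, $m_2 = 1$, $k_1 = 0$, $k_2 = 1$, $p_1 = p$, $p_2 = q$, $A_1 = \dive$, $A_2 = 0$, and define
\[
\M(v,u) \defeq v \cdot \nabla u = \sum_{i=1}^n v_i \, \partial_i u.
\]
This is of the form \eqref{Form of a bilinear operator 2} with $|\alpha^1| = 0 = k_1$ and $|\alpha^2| = 1 = k_2$, and the exponent condition $1/p_1 + 1/p_2 = 1$ holds by hypothesis. Thus the abstract machinery applies provided we verify condition (i) of Theorem \ref{Homogeneous bilinear theorem}.

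To verify (i), I would first observe that for $(v,u) \in L^p_{\dive}(\R^n,\R^n) \times \dot{W}^{1,q}(\R^n)$ the product $v \cdot \nabla u$ lies in $L^1(\R^n)$ by H\"older's inequality, so $\int_{\R^n} v \cdot \nabla u$ is well-defined and is a continuous bilinear form on $L^p_{\dive}(\R^n,\R^n) \times \dot{W}^{1,q}(\R^n)$. By Proposition \ref{Density proposition}, $C^\infty_{c,\dive}(\R^n,\R^n)$ is dense in $L^p_{\dive}(\R^n,\R^n)$, and by the general density statement recalled in the Sobolev background, $C_c^\infty(\R^n)$ is dense in $\dot{W}^{1,q}(\R^n)$. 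So it suffices to check the vanishing on smooth compactly supported test pairs, where a single integration by parts gives
\[
\int_{\R^n} v \cdot \nabla u = - \int_{\R^n} (\dive v)\, u = 0.
\]
By bilinear continuity this identity extends to all of $L^p_{\dive}(\R^n,\R^n) \times \dot{W}^{1,q}(\R^n)$, establishing condition (i).

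With condition (i) verified, Theorem \ref{Homogeneous bilinear theorem} immediately yields condition (ii), namely
\[
\|v \cdot \nabla u\|_{\mathcal{H}^1} \lesssim \|v\|_{L^p} \|\nabla u\|_{L^q},
\]
which is \eqref{Prototypical estimate}. There is essentially no obstacle here: the proof is purely a matter of fitting the operator into the framework of the theorem and invoking the elementary case $A_1 = \dive$ that the author has already noted does not require the technical divergence potentials (indeed, since $k_1 = 0$, the decomposition via Proposition \ref{Potential proposition 2} is trivial, and the only tool used is the Leibniz/Poincar\'e argument of Lemma \ref{Lemma for i to ii} applied to the single factor $\partial_i u$).
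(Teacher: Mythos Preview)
Your proof is correct and follows the paper's approach exactly: verify condition (i) of Theorem \ref{Homogeneous bilinear theorem} by an integration by parts on a dense subclass and then invoke the theorem. One small correction to your closing parenthetical: the paper's remark that the ``elementary cases'' suffice refers to $A_1 \in \{0,\curl\}$, not $A_1 = \dive$; with your ordering $(u^1,u^2)=(v,u)$ and $A_1=\dive$ the proof of Theorem \ref{Homogeneous bilinear theorem} \emph{does} invoke the divergence potential of Proposition \ref{Potential proposition}, whereas swapping the roles to $(u^1,u^2)=(u,v)$ with $A_1=0$, $k_1=1$, $A_2=\dive$, $k_2=0$ is the route the paper has in mind and uses only Lemma \ref{Lemma for i to ii} applied to $u$.
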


\begin{proof}
Since $\int_{\R^n} v \cdot \nabla u = - \langle \dive v, u \rangle = 0$ for all $v \in L^p_{\dive}(\R^n,\R^n)$ and $u \in C_c^\infty(\R^n)$, the claim follows from Theorem \ref{Homogeneous bilinear theorem} and the density of $C_c^\infty(\R^n)$ in $\dot{W}^{1,q}(\R^n)$.
\end{proof}

Corollary \ref{Prototypical corollary} easily implies the $\mathcal{H}^1$ regularity of the div-curl quantity since $\nabla \colon \dot{W}^{1,q}(\R^n) \to L^q_{\curl}(\R^n,\R^n)$ is a surjection.

\begin{cor}[~\cite{CLMS93}] \label{H1 regularity of div-curl quantity}
Suppose $E \in L^p_{\dive}(\R^n,\R^n)$ and $B \in L^q_{\operatorname{curl}}(\R^n,\R^n)$, where $p, q \in ]1, \infty[$ satisfy $1/p+1/q=1$. Then $\|E \cdot B\|_{\mathcal{H}^1} \lesssim \|E\|_{L^p} \|B\|_{L^q}$.
\end{cor}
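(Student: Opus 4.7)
The plan is to reduce the estimate to Corollary \ref{Prototypical corollary} by writing the curl-free field $B$ as a gradient with controlled norm.

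First, I would invoke Proposition \ref{Potential proposition 3} with $k=0$ and $p=q$: since $B \in L^q_{\curl}(\R^n,\R^n) = \dot{W}^{0,q}_{\curl}(\R^n,\R^n)$, there exists $\Psi \in \dot{W}^{1,q}(\R^n)$ with $\nabla \Psi = B$ and
\[
\|\nabla \Psi\|_{L^q} \approx \|B\|_{L^q}.
\]
In effect, $B$ being curl-free allows a global scalar potential with the expected norm bound on $\R^n$.

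Next, I would apply Corollary \ref{Prototypical corollary} to the pair $(E,\Psi) \in L^p_{\dive}(\R^n,\R^n) \times \dot{W}^{1,q}(\R^n)$, yielding
\[
\|E \cdot B\|_{\mathcal{H}^1} \;=\; \|E \cdot \nabla \Psi\|_{\mathcal{H}^1} \;\lesssim\; \|E\|_{L^p} \|\nabla \Psi\|_{L^q} \;\lesssim\; \|E\|_{L^p} \|B\|_{L^q},
\]
which is the claimed inequality.

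There is essentially no obstacle here: the two ingredients (the div-gradient Hardy estimate of Corollary \ref{Prototypical corollary} and the curl-potential of Proposition \ref{Potential proposition 3}) have already done all the work, and the corollary is just their composition. The only small point to verify is that the application of Proposition \ref{Potential proposition 3} in the degenerate case $k=0$ is legitimate; this is immediate since the proposition is stated for $k \in \N \cup \{0\}$.
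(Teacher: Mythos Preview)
Your proof is correct and follows essentially the same approach as the paper, which simply remarks that Corollary \ref{Prototypical corollary} implies the result because $\nabla \colon \dot{W}^{1,q}(\R^n) \to L^q_{\curl}(\R^n,\R^n)$ is a surjection. Note that once $\nabla \Psi = B$ you actually have $\|\nabla \Psi\|_{L^q} = \|B\|_{L^q}$ exactly, so the norm equivalence from Proposition \ref{Potential proposition 3} is not even needed.
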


Since $C_c^\infty(\R^n,\R^n)$ is dense in $\dot{W}^{1,2}(\R^n,\R^n)$, the following result follows from Theorem \ref{Homogeneous bilinear theorem} by an integration by parts.

\begin{cor}[~\cite{CLMS93}] \label{NS lemma}
Let $u \in \dot{W}^{1,2}(\R^n,\R^n)$ and $v \in \dot{W}^{1,2}_{\dive}(\R^n,\R^n)$. Then we have $\| \sum_{i,j=1}^n \partial_j u_i \partial_i v_j \|_{\mathcal{H}^1} \lesssim \|\D u\|_{L^2} \|\D v\|_{L^2}$.
\end{cor}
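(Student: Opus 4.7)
The plan is to identify the operator $\M(u,v) \defeq \sum_{i,j=1}^n \partial_j u_i \partial_i v_j$ as a bilinear operator of the form \eqref{Form of a bilinear operator 2} in Theorem \ref{Homogeneous bilinear theorem}, with parameters $r=2$, $n \ge 2$, $k_1=k_2=1$, $m_1=m_2=n$, $p_1=p_2=2$, $A_1=0$, and $A_2=\dive$. Since every term has $|\alpha|=k_1=1$ and $|\beta|=k_2=1$, the theorem is directly applicable and it suffices to verify condition (i), i.e., that $\int_{\R^n} \M(u,v)=0$ for every $(u,v) \in \dot{W}^{1,2}(\R^n,\R^n) \times \dot{W}^{1,2}_{\dive}(\R^n,\R^n)$.

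To check condition (i), I first reduce to the smooth case by density. The space $C_c^\infty(\R^n,\R^n)$ is dense in $\dot{W}^{1,2}(\R^n,\R^n)$, and by Proposition \ref{Density proposition} the space $C_{c,\dive}^\infty(\R^n,\R^n)$ is dense in $\dot{W}^{1,2}_{\dive}(\R^n,\R^n)$. Since $\M \colon \dot{W}^{1,2}(\R^n,\R^n) \times \dot{W}^{1,2}_{\dive}(\R^n,\R^n) \to L^1(\R^n)$ is bilinear and continuous by H\"{o}lder's inequality, the vanishing of the integral on smooth test functions passes to the full space.

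So fix $(u,v) \in C_c^\infty(\R^n,\R^n) \times C_{c,\dive}^\infty(\R^n,\R^n)$. Integrating by parts in the $\partial_j$ derivative and then swapping the order of $\partial_i$ and $\partial_j$, I obtain
\[
\int_{\R^n} \sum_{i,j=1}^n \partial_j u_i \, \partial_i v_j
= -\int_{\R^n} \sum_{i=1}^n u_i \, \partial_i \Bigl(\sum_{j=1}^n \partial_j v_j\Bigr)
= -\int_{\R^n} \sum_{i=1}^n u_i \, \partial_i (\dive v) = 0,
\]
since $\dive v = 0$. This verifies condition (i), so Theorem \ref{Homogeneous bilinear theorem} yields the desired estimate $\|\M(u,v)\|_{\mathcal{H}^1} \lesssim \|\nabla u\|_{L^2} \|\nabla v\|_{L^2}$.

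There is no real obstacle here; the only subtlety is invoking the correct density statement for divergence-free Sobolev fields so that the integration by parts argument transfers from $C_c^\infty$ to the full homogeneous Sobolev spaces, but this is handled precisely by Proposition \ref{Density proposition}.
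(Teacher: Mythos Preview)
Your proof is correct and follows essentially the same route as the paper: apply Theorem \ref{Homogeneous bilinear theorem} with $A_1=0$, $A_2=\dive$, $k_1=k_2=1$, $p_1=p_2=2$, verify condition (i) by an integration by parts on smooth compactly supported fields, and pass to the full spaces by density. Your explicit invocation of Proposition \ref{Density proposition} for the divergence-free factor is a reasonable elaboration of the density step that the paper leaves implicit.
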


Corollaries \ref{Prototypical corollary} and \ref{NS lemma} have, among others, the following consequence on Leray solutions of the Navier-Stokes equations, where the formula $-\Delta p = \sum_{i,j=1}^3 \partial_j u_i \partial_i u_j$ is used.

\begin{cor}[~\cite{CLMS93}] \label{Navier-Stokes corollary}
Assume that a velocity field $u \in L^2(]0,\infty[; \dot{W}^{1,2}(\R^3,\R^3)) \cap L^\infty(]0,\infty[; L^2(\R^3,\R^3))$ and a pressure $p \in \mathcal{D}'(]0,\infty[ \times \R^3)$ satisfy the incompressible Navier-Stokes equations $\partial_t u + (u \cdot \nabla) u + \nabla p = \nu \Delta u$ and $\dive u = 0$ with viscosity $\nu > 0$. The convective term $(u \cdot \nabla) u$ belongs to $L^2(]0,\infty[; \mathcal{H}^1(\R^3))$ and $\Delta p $ belongs to $L^1(]0,\infty[; \mathcal{H}^1(\R^3))$.
\end{cor}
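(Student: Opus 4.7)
The plan is to prove the two assertions separately, each by applying one of the bilinear estimates already recorded (Corollaries \ref{Prototypical corollary} and \ref{NS lemma}) at almost every fixed time $t>0$ and then integrating the resulting scalar inequality.

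For the convective term I would observe that for a.e.\ $t>0$ one has $u(t,\cdot)\in L^2_{\dive}(\R^3,\R^3)$ (since $\dive u=0$ and $u\in L^\infty_t L^2_x$) and $u_i(t,\cdot)\in \dot{W}^{1,2}(\R^3)$ for each $i$. Applying Corollary \ref{Prototypical corollary} componentwise with $p=q=2$ gives
\[
\|(u\cdot\nabla)u_i(t,\cdot)\|_{\mathcal{H}^1}\;\lesssim\;\|u(t,\cdot)\|_{L^2}\,\|\nabla u_i(t,\cdot)\|_{L^2}.
\]
Squaring, summing over $i$, and integrating in $t$ yields
\[
\int_0^\infty \|(u\cdot\nabla)u\|_{\mathcal{H}^1}^2\,dt\;\lesssim\;\|u\|_{L^\infty_t L^2_x}^2\int_0^\infty\|\nabla u(t,\cdot)\|_{L^2}^2\,dt,
\]
which is finite by hypothesis; this proves the first statement.

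For the second statement I would first derive the scalar identity $-\Delta p=\sum_{i,j=1}^3\partial_j u_i\,\partial_i u_j$ by taking the distributional divergence of the Navier-Stokes momentum equation and using $\dive u=0$ twice (once to kill $\partial_t\dive u$ and $\Delta\dive u$, and once in the expansion $\dive((u\cdot\nabla)u)=\sum_{i,j}(\partial_i u_j\,\partial_j u_i+u_j\,\partial_j\partial_i u_i)$ to discard the second summand). Then, for a.e.\ $t>0$, the vector field $u(t,\cdot)$ lies in $\dot{W}^{1,2}_{\dive}(\R^3,\R^3)$, so Corollary \ref{NS lemma} applied with $v=u$ gives
\[
\|\Delta p(t,\cdot)\|_{\mathcal{H}^1}\;\lesssim\;\|\D u(t,\cdot)\|_{L^2}^{2}.
\]
Integrating in time yields $\int_0^\infty\|\Delta p\|_{\mathcal{H}^1}\,dt\lesssim \int_0^\infty\|\D u(t,\cdot)\|_{L^2}^{2}\,dt<\infty$, which gives the conclusion $\Delta p\in L^1(]0,\infty[;\mathcal{H}^1(\R^3))$.

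The only nontrivial point is the justification of the Poisson identity for $p$; once that is in hand the rest is a pointwise application of the preceding corollaries combined with the energy bounds built into the definition of Leray solution. Measurability in $t$ of the $\mathcal{H}^1$-norms follows from the measurability of the maximal-function truncations $\sup_{0<t\le N}|\,\cdot\,*\eta_t|$ and monotone convergence, and requires no further comment.
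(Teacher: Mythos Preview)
Your proposal is correct and follows essentially the same route as the paper: the paper does not write out a proof but simply indicates that the corollary follows from Corollaries~\ref{Prototypical corollary} and~\ref{NS lemma} together with the identity $-\Delta p=\sum_{i,j=1}^3\partial_j u_i\,\partial_i u_j$, which is exactly the argument you carry out.
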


Theorem \ref{H1 regularity of Jacobians} is a direct corollary of Theorem \ref{Homogeneous bilinear theorem} since
\[\int_{\R^n} J u =\sum_{j=1}^n (-1)^j  \int_{\R^n} \partial_j \left( u_1 \frac{\partial(u_2,\dots,u_n)}{\partial(x_1,\dots,\hat{x_j},x_n)} \right) = 0\]
for every $u \in C_c^\infty(\R^n,\R^n)$. We also mention the following consequence of Theorem \ref{H1 regularity of Jacobians}.

\begin{cor}[~\cite{CLMS93}]
If $u \in \dot{W}^{2,n}(\R^n)$, then the Hessian $\mathbf{H} u \defeq \det[(\nabla \otimes \nabla) u]$ belongs to $\HR$ and $\|\mathbf{H} u\|_{{\mathcal H}^1} \lesssim \Vert \nabla^2 u \Vert_{L^n}^n$.
\end{cor}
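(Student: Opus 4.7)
The plan is to reduce this statement directly to Theorem \ref{H1 regularity theorem}. The key algebraic observation is that the Hessian of $u$ coincides with the Jacobian determinant of the gradient mapping $\nabla u$, since $(\nabla \otimes \nabla) u = \nabla(\nabla u)$ as an $n \times n$ matrix field, and hence $\mathbf{H} u = \det(\nabla(\nabla u)) = J(\nabla u)$.

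First, I would verify the regularity hypothesis on $\nabla u$. Since $u \in \dot{W}^{2,n}(\R^n)$, the partial derivatives $\partial_j u$ belong to $\dot{W}^{1,n}(\R^n)$ for each $j \in \{1,\dots,n\}$, so the vector field $\nabla u = (\partial_1 u, \dots, \partial_n u)$ lies in $\dot{W}^{1,n}(\R^n,\R^n)$. Next, I would apply Theorem \ref{H1 regularity theorem} to the mapping $\nabla u$, yielding
\[
\|\mathbf{H} u\|_{\mathcal{H}^1} = \|J(\nabla u)\|_{\mathcal{H}^1} \lesssim \prod_{j=1}^n \|\nabla(\partial_j u)\|_{L^n}.
\]
Finally, I would conclude by observing that $\|\nabla(\partial_j u)\|_{L^n} \le \|\nabla^2 u\|_{L^n}$ for every $j$, so the product on the right is bounded by $\|\nabla^2 u\|_{L^n}^n$, giving the desired inequality.

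There is no genuine obstacle here: the proof is essentially a one-line reduction via the identity $\mathbf{H} u = J(\nabla u)$, plus a trivial norm bound to go from the product of single-row gradient norms to the full second-order Sobolev seminorm. The only thing worth double-checking is the identification of the Hessian matrix with the Jacobian matrix of $\nabla u$, and that $\dot{W}^{2,n}$ regularity of $u$ transfers to $\dot{W}^{1,n}$ regularity of $\nabla u$, both of which are immediate from the definitions recalled earlier in the background section.
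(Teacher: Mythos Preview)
Your proof is correct and matches the paper's approach exactly: the paper simply records the Hessian corollary as an immediate consequence of Theorem~\ref{H1 regularity theorem}, which is precisely your reduction via $\mathbf{H}u = J(\nabla u)$ together with the obvious inclusion $\nabla u \in \dot{W}^{1,n}(\R^n,\R^n)$.
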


The following generalization of Corollary \ref{Prototypical corollary} by P. Strzelecki and its proof in ~\cite{Str01} were among the main inspirations behind Theorem \ref{Bilinear theorem} and Theorem \ref{Homogeneous bilinear theorem}.

\begin{cor}[~\cite{Str01}] \label{Cor Strzelecki}
Let $u \in \dot{W}^{k,p}(\R^n)$, where $k \ge 1$ and $1 < p < \infty$. Suppose $E = (E_\alpha)_{|\alpha|=k} \in L^{p'}(\R^n,\R^{kn})$ satisfies $\nabla^k \cdot E = 0$, that is, $\sum_{|\alpha|=k} \int_{\R^n} E_\alpha \partial^\alpha \varphi = 0$ for all $\varphi \in C_c^\infty(\R^n)$. Then $\|\sum_{|\alpha|=k} E_\alpha \partial^\alpha u\|_{\mathcal{H}^1} \lesssim \|E\|_{L^{p'}} \|\nabla^k u\|_{L^p}$.
\end{cor}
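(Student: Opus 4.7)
The plan is to apply the proof strategy of Lemma \ref{Lemma for i to ii} directly, viewing $\B(u,E) \defeq \sum_{|\alpha|=k} E_\alpha \partial^\alpha u$ as a bilinear operator on $\dot{W}^{k,p}(\R^n) \times L^{p'}(\R^n,\R^{N})$ with $k_1 = k$, $p_1 = p$, $k_2 = 0$, $p_2 = p'$, $A_1 = 0$, and $A_2 = \nabla^k \cdot$ (even though this last operator is not among $\{0,\dive,\curl\}$, the proof of Lemma \ref{Lemma for i to ii} in the case $A_1 = 0$ never uses the particular form of $A_2$, only the vanishing integral condition). First I would verify the integration condition: since $C_c^\infty(\R^n)$ is dense in $\dot{W}^{k,p}(\R^n)$, the hypothesis $\sum_{|\alpha|=k} \int E_\alpha \partial^\alpha \varphi = 0$ for every $\varphi \in C_c^\infty(\R^n)$ extends by continuity to $\int_{\R^n} \B(u,E) = 0$ for every $u \in \dot{W}^{k,p}(\R^n)$.

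Next, to estimate the $\mathcal{H}^1$ norm, I would fix $x \in \R^n$ and $t > 0$, pick a cube $Q \supset B(x,t)$ with $|Q| \lesssim_n t^n$, and let $P^{k-1}_Q u$ be the Poincar\'{e} polynomial of degree at most $k-1$ given by Lemma \ref{Higher order Poincare lemma}. The test function $\varphi \defeq \eta_t(x-\cdot)(u - P^{k-1}_Q u)$ lies in $\dot{W}^{k,p}(\R^n)$ with compact support, so plugging it into the vanishing integral condition gives
\[
\sum_{|\alpha|=k} \int_{\R^n} E_\alpha \partial^\alpha[\eta_t(x-\cdot)(u - P^{k-1}_Q u)] = 0.
\]
The Leibniz rule expands $\partial^\alpha \varphi$; since $|\alpha| = k$ and $\deg P^{k-1}_Q u \le k-1$, the term with $\gamma = \alpha$ equals $\eta_t(x-\cdot)\partial^\alpha u$. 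Rearranging then yields
\[
\eta_t * \B(u,E)(x) = -\sum_{|\alpha|=k}\sum_{\gamma < \alpha} \binom{\alpha}{\gamma} \int_{\R^n} \partial^{\alpha-\gamma}[\eta_t(x-\cdot)] \partial^\gamma(u - P^{k-1}_Q u) E_\alpha.
\]

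Finally I would bound each term by an expression of the form $M_{p-\delta}(\nabla^k u)(x) M_{p'-\epsilon}(E_\alpha)(x)$, with $\epsilon \in ]0,p'-1[$ chosen (together with $\delta > 0$) so that $(p'-\epsilon)' = (p-\delta)^*$ and so that Sobolev embeddings are applicable. Using $|\partial^{\alpha-\gamma}\eta_t(x-\cdot)| \lesssim t^{-n-(k-|\gamma|)}$, H\"{o}lder's inequality on $Q$, and the higher-order Poincar\'{e} inequality \eqref{Poincare 1}--\eqref{Poincare 2} applied to $\partial^\gamma (u - P^{k-1}_Q u)$, one sees that the factor $t^{k-|\gamma|}$ produced by Poincar\'{e} cancels the $t^{-(k-|\gamma|)}$ from differentiating $\eta_t$, yielding the claimed pointwise maximal function bound. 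Taking $\sup_{t>0}$, integrating, and applying H\"{o}lder together with the Hardy--Littlewood maximal theorem (with $p - \delta < p$ and $p' - \epsilon < p'$) gives $\|\B(u,E)\|_{\mathcal{H}^1} \lesssim \|\nabla^k u\|_{L^p} \|E\|_{L^{p'}}$.

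The main obstacle, which is essentially already resolved by the machinery built for Lemma \ref{Lemma for i to ii}, is the choice of Poincar\'{e} polynomial so that $\partial^\alpha P^{k-1}_Q u = 0$ for $|\alpha| = k$ \emph{and} all intermediate derivatives $\partial^\gamma(u - P^{k-1}_Q u)$ with $|\gamma| < k$ obey scale-invariant estimates in terms of $\|\nabla^k u\|_{L^{p-\delta}}$; this is exactly what Lemma \ref{Higher order Poincare lemma} provides. The fact that the constraint on $E$ is of order $k$ rather than order one never intervenes because we only test against the single distributional identity $\nabla^k \cdot E = 0$.
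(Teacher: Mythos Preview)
Your proposal is correct and follows essentially the same route as the paper. The paper treats this corollary as a direct consequence of Theorem~\ref{Homogeneous bilinear theorem} (with $A_1=0$, $k_1=k$, $p_1=p$, $k_2=0$, $p_2=p'$, and $A_2=\nabla^k\cdot$), and the proof of that theorem in the case $A_1=0$ is precisely the Poincar\'{e}-polynomial/Leibniz-rule/maximal-function argument of Lemma~\ref{Lemma for i to ii} that you have written out; your observation that the particular form of $A_2$ is irrelevant as long as the vanishing-integral condition holds is exactly the point.
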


The last compensated compactness quantity we mention appears in the von K\'{a}rm\'{a}n equations (~\cite[p. 261]{CLMS93}, see also ~\cite{CM15} and the references contained therein). Condition (iii) of Theorem \ref{Homogeneous bilinear theorem} is clearly satisfied.

\begin{cor}[~\cite{CLMS93}]
When $(u, v) \in \dot{W}^{2,2}(\R^2,\R^2)$, the Monge-Amp\`{e}re form
$[u,v] \defeq u_{xx} v_{yy} + u_{yy} v_{xx} - 2 u_{xy} v_{xy}$ satisfies $\|[u,v]\|_{\mathcal{H}^1} \lesssim \|\nabla^2 u\|_{L^2} \|\nabla^2 v\|_{L^2}$.
\end{cor}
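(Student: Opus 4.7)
The plan is a direct application of Theorem \ref{Homogeneous bilinear theorem} with parameters $r=2$, $n=2$, $m_1=m_2=1$, $k_1=k_2=2$, $p_1=p_2=2$, and $A_1=A_2=0$. The Monge-Amp\`{e}re form $[u,v]$ fits the template \eqref{Form of a bilinear operator 2}: every term has $|\alpha|=|\beta|=2$, and the only nonzero coefficients are $c_{(2,0),(0,2)}=1$, $c_{(0,2),(2,0)}=1$, and $c_{(1,1),(1,1)}=-2$. Since $\dim m_1 = \dim m_2 = 1$, the indices $i,j$ are trivial and I drop them.

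First I will verify condition (iii) of Theorem \ref{Homogeneous bilinear theorem}. Because every nonzero term has total order $4$, the sum $\sum_{\alpha+\beta=\gamma}c_{\alpha,\beta}$ trivially vanishes unless $|\gamma|=4$. For $\gamma\in\{(4,0),(3,1),(1,3),(0,4)\}$ one checks that no admissible decomposition $\alpha+\beta=\gamma$ with $|\alpha|=|\beta|=2$ hits a nonzero coefficient, so the sum is $0$. The only interesting case is $\gamma=(2,2)$, where the three decompositions $((2,0),(0,2))$, $((1,1),(1,1))$, $((0,2),(2,0))$ contribute
\[
c_{(2,0),(0,2)}+c_{(1,1),(1,1)}+c_{(0,2),(2,0)}=1+(-2)+1=0.
\]
Thus condition (iii) holds, and Theorem \ref{Homogeneous bilinear theorem} delivers the estimate $\|[u,v]\|_{\mathcal{H}^1}\lesssim \|\nabla^2 u\|_{L^2}\|\nabla^2 v\|_{L^2}$.

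There is no real obstacle; the entire content is the arithmetic $1+1-2=0$. If one preferred to check condition (i) directly (and thus give a self-contained justification of the assertion in the paper that condition (iii) is clearly satisfied), one can use the identity
\[
[u,v]=\partial_y(u_{xx}v_y-u_{xy}v_x)+\partial_x(u_{yy}v_x-u_{xy}v_y),
\]
verified by expanding the right-hand side: the cross terms $\pm u_{xxy}v_y$ and $\pm u_{xyy}v_x$ cancel, leaving exactly $u_{xx}v_{yy}+u_{yy}v_{xx}-2u_{xy}v_{xy}$. Since $[u,v]$ is thus a sum of pure derivatives, $\int_{\R^2}[u,v]=0$ on $C_c^\infty(\R^2,\R^2)$, and this extends to $\dot{W}^{2,2}(\R^2,\R^2)$ by density, after which Theorem \ref{Homogeneous bilinear theorem} gives the conclusion.
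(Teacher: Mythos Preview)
Your proposal is correct and follows exactly the approach indicated in the paper, which simply asserts that condition (iii) of Theorem \ref{Homogeneous bilinear theorem} is clearly satisfied; you have spelled out the arithmetic $1+(-2)+1=0$ for $\gamma=(2,2)$ and the vacuous cases, which is precisely what ``clearly satisfied'' means here. The divergence identity you add is a nice alternative verification of condition (i) but is not needed.
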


\subsection{The equivalence between $\mathcal{H}^1$ regularity and weak sequential continuity} \label{The equivalence between H1 regularity and weak sequential continuity}
All the $\mathcal{H}^1$ regular quantities mentioned in \textsection \ref{H1 regularity of specific operators} (and numerous others that are studied in ~\cite{CLMS93}) are compensated compactness quantities. Accordingly, the following problem is presented at ~\cite[p. 267]{CLMS93}: 

\begin{quotation}
``Roughly speaking, we have on one hand `weakly continuous nonlinear quantities' and on the other hand `nonlinear quantities' that belong to $\mathscr{H}^1$. A natural -- but vague -- question is then to determine whether these two classes coincide.''
\end{quotation}

Coifman, Lions, Meyer and Semmes went on to point out that it is not clear how one should formulate precisely this very general question. Partial results are found in ~\cite{CLMS93}, ~\cite{CG92} and ~\cite{Gra92}. In this section we show that Theorem \ref{Homogeneous bilinear theorem} implies a positive result that applies to a large collection of familiar compensated fcompactness quantities. By using the potentials for divergence-free and curl-free fields constructed in \textsection \ref{Potential for divergence and curl}, the result can also be deduced from ~\cite[Theorem 1]{Gra92}. A negative result appears in \textsection \ref{H1 regular quantities not enjoying weak continuity}.

Before stating the result we note that in this context the class of multilinear quantities studied in Theorem \ref{Homogeneous bilinear theorem} is more natural than the one treated in Theorem \ref{Bilinear theorem}. Indeed, if we allow $\B$ to contain lower order derivatives, then easy examples (such as $\B(u,v) \defeq u \cdot v$ for $u,v \in W^{1,2}(\R^n,\R^m)$) show that weak sequential continuity does not imply $\mathcal{H}^1$ regularity.

\begin{cor} \label{Compensated integrability corollary}
Suppose $\M$ is as in Theorem \ref{Homogeneous bilinear theorem} and $A_j \in \{0,\dive,\curl\}$ for every $j \in \{1,\dots,r\}$. The following conditions are equivalent.
\renewcommand{\labelenumi}{(\roman{enumi})}
\begin{enumerate}
\item $\int_{\R^n} \mathcal{M}(u) = 0$ for every $u \in \Pi_{j=1}^r C_{c,A_j}^\infty(\R^n,\R^{m_j})$.

\item For every $u \in \Pi_{j=1}^r \dot{W}^{k_j,p_j}_{A_j}(\R^n,\R^{m_j})$,
\[\|\mathcal{M}(u)\|_{\mathcal{H}^1} \lesssim \prod_{j=1}^r \|\nabla^{k_j} u^j\|_{L^{p_j}}.\]

\item If the mappings $u, u^1, u^2, \cdots \in \Pi_{j=1}^r \dot{W}^{k_j,p_j}_{A_j}(\R^n,\R^{m_j})$ satisfy $u^l \rightharpoonup u$ in $\Pi_{j=1}^r \dot{W}^{k_j,p_j}(\R^n,\R^{m_j})$, then $\mathcal{M}(u^l) \to \mathcal{M}(u)$ in $\mathcal{D}'(\R^n)$.
\end{enumerate}
\end{cor}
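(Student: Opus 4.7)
My strategy is to establish the equivalence by proving $(i) \Leftrightarrow (ii)$, $(iii) \Rightarrow (i)$, and $(ii) \Rightarrow (iii)$, with the last implication being the main obstacle and drawing on the algebraic cancellation together with the potentials from \textsection \ref{Potential for divergence and curl}.

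For $(i) \Leftrightarrow (ii)$: Since $\sum_{j=1}^r 1/p_j = 1$, H\"older's inequality makes $\mathcal{M}$ a continuous multilinear map $\prod_{j=1}^r \dot{W}^{k_j,p_j}(\R^n,\R^{m_j}) \to L^1(\R^n)$. Proposition \ref{Density proposition} therefore allows one to upgrade the vanishing of $\int \mathcal{M}$ on $\prod_{j=1}^r C_{c,A_j}^\infty(\R^n,\R^{m_j})$ to vanishing on all of $\prod_{j=1}^r \dot{W}^{k_j,p_j}_{A_j}(\R^n,\R^{m_j})$, which is precisely condition (i) of Theorem \ref{Homogeneous bilinear theorem}; that theorem then delivers (ii).

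For $(iii) \Rightarrow (i)$ I would use a concentration/scaling argument exploiting the scale invariance of $\int \mathcal{M}(u)$. Fix $u \in \prod_{j} C_{c,A_j}^\infty(\R^n,\R^{m_j})$ and define the dilations $u_j^\tau(x) := \tau^{n/p_j - k_j} u_j(\tau x)$ for $\tau > 0$. Homogeneity of $A_j$ gives $A_j u_j^\tau = 0$; a change of variables yields both $\|\nabla^{k_j} u_j^\tau\|_{L^{p_j}} = \|\nabla^{k_j} u_j\|_{L^{p_j}}$ and $\mathcal{M}(u^\tau)(x) = \tau^n \mathcal{M}(u)(\tau x)$. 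Testing $\nabla^{k_j} u_j^\tau$ against $g \in C_c^\infty$ and extending by density in $L^{p_j'}$, one checks that $u_j^\tau \rightharpoonup 0$ in $\dot{W}^{k_j,p_j}$ as $\tau \to \infty$, so (iii) forces $\mathcal{M}(u^{\tau_l}) \to 0$ in $\mathcal{D}'(\R^n)$ along any sequence $\tau_l \to \infty$. On the other hand, for $\varphi \in C_c^\infty(\R^n)$ with $\varphi(0) = 1$, dominated convergence in
\[
\int_{\R^n} \varphi\,\mathcal{M}(u^\tau) = \int_{\R^n} \varphi(y/\tau)\,\mathcal{M}(u)(y)\,dy
\]
(valid since $\mathcal{M}(u) \in L^1$) gives the limit $\int \mathcal{M}(u)$, whence $\int \mathcal{M}(u) = 0$.

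For $(ii) \Rightarrow (iii)$, suppose $u^l \rightharpoonup u$ in $\prod_j \dot{W}^{k_j,p_j}_{A_j}$ and set $w_j^l := u_j^l - u_j \rightharpoonup 0$. Multilinearity yields the telescoping decomposition
\[
\mathcal{M}(u^l) - \mathcal{M}(u) = \sum_{\emptyset \neq S \subset \{1,\dots,r\}} \mathcal{M}(\tilde{u}^{l,S}),
\]
with $(\tilde{u}^{l,S})_j = w_j^l$ if $j \in S$ and $u_j$ otherwise, so it suffices to show each summand tends to $0$ in $\mathcal{D}'(\R^n)$. When $|S| = 1$, say $S = \{j_0\}$, testing against $\varphi \in C_c^\infty(\R^n)$ reduces to pairing the weakly null factor $\partial^{\alpha^{j_0}} w_{j_0,i_{j_0}}^l$ in $L^{p_{j_0}}$ against a fixed function of $L^{p_{j_0}'}$ (built by H\"older from $\varphi$ and the other fixed factors), which pairs to zero. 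The main obstacle is $|S| \geq 2$, where two or more factors are only weakly convergent; here I would invoke the algebraic condition behind (i) (Theorem \ref{Homogeneous bilinear theorem}$(iii)$ in the bilinear case, or its multilinear analogue \eqref{Multilinear ugly analogue}) to rewrite $\mathcal{M}$ as a sum of divergences $\mathcal{M}(v) = \sum_{m=1}^n \partial_m F_m(v)$ in which each $F_m$ is multilinear of the same total order but with at least one slot carrying strictly fewer than $k_j$ derivatives; the prototypical identity is $\partial_m A\,\partial_l B - \partial_l A\,\partial_m B = \partial_m(A\partial_l B) - \partial_l(A\partial_m B)$. For slots with $A_j \in \{\dive,\curl\}$ the same effect is produced by Propositions \ref{Potential proposition} and \ref{Potential proposition 3}: writing $w_j^l$ as a derivative of a potential in $\dot{W}^{k_j+1,p_j}$ boosts regularity by one order, and Rellich--Kondrachov then supplies strong convergence of $\nabla^{k_j}$ of the potential in $L^{p_j}_{loc}$. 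Integrating by parts against $\partial_m\varphi$ in each case yields an integrand in which one factor converges strongly to zero locally while the rest are uniformly bounded in the dual H\"older exponent, so H\"older's inequality forces $\int \varphi\,\mathcal{M}(\tilde u^{l,S}) \to 0$ and completes the proof.
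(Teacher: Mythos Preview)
Your $(i)\Leftrightarrow(ii)$ matches the paper exactly. For $(iii)\Rightarrow(i)$ you use a concentration/scaling argument, whereas the paper uses periodization (extending $u$ periodically and setting $u^{l,j}(x)=l^{-k_j}\chi_{]-1,1[^n}(x)U^j(lx)$). Both are valid; your route is arguably cleaner since it avoids checking that the truncated periodic extension is smooth and constraint-preserving.

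For $(ii)\Rightarrow(iii)$ your argument differs from the paper's and has a genuine gap in the $|S|\ge 2$ case. You propose a single global rewriting $\mathcal{M}(v)=\sum_m\partial_m F_m(v)$ with ``at least one slot carrying strictly fewer than $k_j$ derivatives'', and then claim that after integrating by parts ``one factor converges strongly to zero locally''. But the slot that loses a derivative in $F_m$ is fixed by the divergence form you chose --- it need not lie in $S$. If that slot is outside $S$, the corresponding factor is the fixed $u_j$, and you are left with $|S|\ge 2$ top-order factors $\partial^{\alpha^j}w_j^l$ that are only weakly bounded; H\"older does not conclude. The fix is to make the choice of lowered slot depend on $S$: for each $S$ pick $j_0\in S$ and use condition (i) in the form $\int_{\R^n}\mathcal{M}(v^1,\dots,\varphi v^{j_0},\dots,v^r)=0$, then expand by the Leibniz rule so that all remainder terms carry $\partial^\gamma w_{j_0}^l$ with $|\gamma|<k_{j_0}$. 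This also requires first reducing to $A_j=0$ for all $j$ via Propositions~\ref{Potential proposition}--\ref{Potential proposition 3}, since otherwise $\varphi v^{j_0}$ need not satisfy $A_{j_0}(\varphi v^{j_0})=0$; you invoke the potentials later and only for some slots, which is the wrong order.

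This is exactly how the paper proceeds, except that the paper uses the telescoping sum
\[
\int_{\R^n}\varphi\,\mathcal{M}(u^l)-\int_{\R^n}\varphi\,\mathcal{M}(u)=\sum_{j=1}^r\int_{\R^n}\varphi\,\mathcal{M}(u^{l,1},\dots,u^{l,j-1},u^{l,j}-u^j,u^{j+1},\dots,u^r)
\]
rather than the full subset expansion. Each telescoping term has exactly one difference factor, so the choice $j_0=j$ is forced and no case distinction $|S|=1$ versus $|S|\ge 2$ is needed. After reducing to $A_j=0$, the Leibniz rule and Rellich--Kondrachov (with the usual polynomial normalization on a cube containing $\operatorname{supp}\varphi$, which you also omit) finish each term uniformly.
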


The equivalence of (i) and (ii) follows from Proposition \ref{Density proposition} and Theorem \ref{Homogeneous bilinear theorem}, and thus it suffices to prove that (i) $\Leftrightarrow$ (iii). The result is essentially classical, but we sketch the idea of a proof for the convenience of the reader.

\begin{proof}[Proof of direction (i) $\Rightarrow$ (iii)]
Suppose (i) holds, and assume that the mappings $u, u^1, u^2, \dots$ belong to $\Pi_{j=1}^r \dot{W}^{k_j,p_j}_{A_j}(\R^n,\R^{m_j})$ and $u^l \rightharpoonup u$ in $\Pi_{j=1}^r \dot{W}^{k_j,p_j}(\R^n,\R^{m_j})$. By the estimates and equivalence of weak convergences recorded in Propositions \ref{Potential proposition}--\ref{Potential proposition 3} we may assume that $A_1 = \cdots = A_r  = 0$. When $\varphi \in \CO$, we use telescoping summation to write
\[\int_{\R^n} \varphi \M(u^l) - \int_{\R^n} \varphi \M(u)
= \sum_{j=1}^r \int_{\R^n} \varphi \M(u^{l,1},\dots,u^{l,j-1},u^{l,j}-u^j,u^{j+1},\dots,u^r),\]
and the proof is completed by using the Leibniz rule and the Rellich-Kondrachov theorem on every term.
\end{proof}

The following argument has been adapted from ~\cite[pp. 14-15]{Dac82}.

\begin{proof}[Sketch of the proof of (iii) $\Rightarrow$ (i)]
Let $u \in \prod_{j=1}^r C_{c,A_j}^\infty(\R^n,\R^{m_j})$; we aim to prove that $\int_{\R^n} \M(u) = 0$. By scaling we may assume that $\supp(u) \subset \, ]-1,1[^n$. We denote the periodic extension of $u$ from $]-1,1[^n$ to the whole space $\R^n$ by $U$. When $l \in \N$, we define $u^l \in \prod_{j=1}^r C_{c,A_j}^\infty(\R^n,\R^{m_j})$ by
\[(u^{l,1}(x),\dots,u^{l,r}(x)) \defeq	\left( \frac{1}{l^{k_1}} \chi_{]-1,1[^n}(x) U^1(lx), \dots, \frac{1}{l^{k_r}} \chi_{]-1,1[^n}(x) U^r(lx) \right).\]
Now $\nabla^{k_j} u^{l,j} \rightharpoonup 0$ for every $j \in \{1,\dots,r\}$, and so, by assumption, $\int_{\R^n} \M(u^l) \to 0$. On the other hand, a straightforward computation gives $\int_{\R^n} \M(u^l) = \int_{\R^n} \M(u)$ for every $l \in \N$, and therefore $\int_{\R^n} \M(u) = 0$.
\end{proof}

\section{The null Lagrangian condition and $\mathcal{H}^1$ regularity} \label{Homogeneous polynomials of partial derivatives}
In this section we assume that $n,m,k \in \N$. Recall from \S \ref{Null Lagrangians} that  a null Lagrangian $\LLL \colon X(n,m,k) \to \R$ is a polynomial of some degree $r \in \N$ and that $\LLL$ maps $\dot{W}^{k,r}(\R^n,\R^m)$ into $\HR$ with the natural norm bound $\|\LLL(\nabla^k u)\|_{\mathcal{H}^1} \lesssim \|\nabla^k u\|_{L^r}^r$ if and only if $\LLL$ is $r$-homogeneous. Our main aim is to show that for homogeneous polynomials, $\mathcal{H}^1$ regularity is a strictly weaker condition than weak sequential continuity. This provides, in the present context, a negative answer to the question of Coifman, Lions, Meyer and Semmes quoted in \textsection \ref{The equivalence between H1 regularity and weak sequential continuity}.

\subsection{Characterization of $\mathcal{H}^1$ regularity for polynomials}
We start by fixing notation. We denote multiples of multi-indices by capital greek letters and endow them with the following ordering.

\begin{definition}
When $\Gamma \defeq (\gamma^1,\dots,\gamma^r), \; \Theta \defeq (\theta^1,\dots,\theta^r) \in ((\N \cup \{0\})^n)^r$, we say that $\Gamma \prec \Theta$ if $\gamma^1 \le \theta^1, \dots, \gamma^r \le \theta^r$ with strict inequality for at least one index $i \in \{1,\dots,r\}$.
\end{definition}

When $r \ge 2$, a general $r$-homogeneous polynomial $\LLL \colon X(n,m,k) \to \R$ is of the form
\begin{equation} \label{Form of homogeneous polynomial}
\LLL(\nabla^k u) \defeq \sum_{\nu \in \{1,...,m\}^r} \sum_{|\theta_1|, \cdots, |\theta_r| = k} C_{\Theta,\nu} \prod_{i=1}^r \partial^{\theta^i} u_{\nu_i}.
\end{equation}

\begin{theorem} \label{Extension to homogeneous polynomials}
Let $r \ge 2$. Suppose $\LLL \colon X(n,m,k) \to \R$ is an $r$\textsuperscript{th} order polynomial. The following conditions are equivalent.

\renewcommand{\labelenumi}{(\roman{enumi})}
\begin{enumerate}
\item $\|\LLL(\nabla^k u)\|_{\mathcal{H}^1} \lesssim \|\nabla^k u\|_{L^r}^r$ for all $u \in \dot{W}^{k,r}(\R^n,\R^m)$.

\item $\int_{\R^n} \LLL(\nabla^k u) = 0$ for every $u \in C_c^\infty(\R^n,\R^m)$ and $\LLL$ is $r$-homogeneous.
\end{enumerate}

\noindent When (i) and (ii) hold, the set
\[\left\{ \sum_{j=1}^\infty \lambda^j \LLL(\nabla^k u^j) \colon \sup_{j \in \N} \|u^j\|_{W^{k,r}} \le 1, \; \sum_{j=1}^\infty |\lambda^j| < \infty \right\}\]
is of the first category in $\HR$.
\end{theorem}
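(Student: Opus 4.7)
My plan divides the proof into the two directions of the equivalence, handled by polarization and a scaling argument respectively, together with a contradiction argument for the first-category claim that combines Lemma \ref{Main lemma} with an integration-by-parts trick in the spirit of the proof of Theorem \ref{Main theorem}.

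\textbf{Equivalence of (i) and (ii).} For (ii) $\Rightarrow$ (i), I would polarize $\LLL$ into a symmetric $r$-linear form $\M$ with $\M(u,\dots,u) = \LLL(\nabla^k u)$. The polarization identity combined with the hypothesis $\int_{\R^n} \LLL(\nabla^k w) = 0$ on $C_c^\infty(\R^n,\R^m)$ yields $\int_{\R^n} \M(u^1,\dots,u^r) = 0$ on $C_c^\infty(\R^n,\R^m)^r$. Applying the multilinear version of Theorem \ref{Homogeneous bilinear theorem} then gives $\|\M(u^1,\dots,u^r)\|_{\mathcal{H}^1} \lesssim \prod_i \|\nabla^k u^i\|_{L^r}$, and specializing $u^1 = \cdots = u^r = u$ produces (i). For (i) $\Rightarrow$ (ii), the inclusion $\HR \subset L^1$ (into mean-zero functions) immediately yields $\int_{\R^n} \LLL(\nabla^k u) = 0$; $r$-homogeneity follows by writing $\LLL = \sum_{j=0}^r \LLL_j$ with $\LLL_j$ being $j$-homogeneous, substituting $tu$ into (i), and comparing behaviors as $t \to 0$. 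This forces $\LLL_j = 0$ for $j<r$ (the constant $\LLL_0$ first, since constants do not belong to $\HR$, and then inductively the rest, as they would otherwise dominate in $\HR$ as $t \to 0$).

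\textbf{First-category claim.} Suppose for contradiction the set in \eqref{Form of homogeneous polynomial}-style display is of second category. By Lemma \ref{Main lemma}, the norms $\|b\|_\text{BMO}$ and $\sup_{\|u\|_{W^{k,r}} \le 1}|\int_{\R^n} b\,\LLL(\nabla^k u)|$ are then equivalent on BMO. I fix $b \in C_c^\infty(\R^n) \setminus \{0\}$ with $\|b\|_\text{BMO} = 1$; applying the equivalence to $b(\tau\cdot)$ for arbitrary $\tau > 0$ and writing the near-optimizer as $u^\tau(x) = v^\tau(\tau x)$, the scale-invariance of BMO together with the change-of-variables identities (analogous to \eqref{tau formula for BMO}--\eqref{tau formula for change of varables}, with the $r$-homogeneity of $\LLL$ entering the third one) yields $v^\tau \neq 0$ satisfying
\[
\sum_{l=0}^k \tau^{lr}\|\nabla^l v^\tau\|_{L^r}^r \;\lesssim\; \tau^{rk}\int_{\R^n} b\,\LLL(\nabla^k v^\tau).
\]
The crux is then to bound the right-hand side by an expression that separates low-order and high-order norms of $v^\tau$. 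Since $bv^\tau \in W^{k,r}(\R^n,\R^m)$ and $\M(bv^\tau, v^\tau, \dots, v^\tau) \in \HR$ by Theorem \ref{Homogeneous bilinear theorem}, the latter has mean zero, i.e.\ $\int_{\R^n} \M(bv^\tau, v^\tau, \dots, v^\tau) = 0$. Expanding each factor $\partial^{\theta^1}(b v^\tau_{\nu_1})$ by the Leibniz rule, the $\gamma = 0$ contribution reassembles $b\,\LLL(\nabla^k v^\tau)$ (up to a constant from polarization) while the remaining $\gamma \ge 1$ contributions carry at least one derivative on $b$ and \emph{at most $k-1$} derivatives on the distinguished factor $v^\tau_{\nu_1}$. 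Hölder's inequality together with a weighted Young's inequality (in the style of the proof of Theorem \ref{Main theorem}) then yield, for any $M > 0$,
\[
\left|\int_{\R^n} b\,\LLL(\nabla^k v^\tau)\right| \;\lesssim_b\; M^r \sum_{l<k}\|\nabla^l v^\tau\|_{L^r}^r + M^{-r/(r-1)}\|\nabla^k v^\tau\|_{L^r}^r.
\]
Choosing $M$ large enough to absorb the $M^{-r/(r-1)}\|\nabla^k v^\tau\|_{L^r}^r$ contribution into the $\tau^{rk}\|\nabla^k v^\tau\|_{L^r}^r$ term on the left, and then $\tau$ small enough to keep the coefficients of $\|\nabla^l v^\tau\|_{L^r}^r$ for $l<k$ positive on the resulting left-hand side, forces $\|\nabla^l v^\tau\|_{L^r} = 0$ for every $l < k$, and in particular $\|v^\tau\|_{L^r} = 0$, contradicting $v^\tau \neq 0$.

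The main technical obstacle is the careful bookkeeping of the Leibniz expansion: one must check that, after cancelling the $\gamma = 0$ term against $\int \M(bv^\tau, v^\tau, \dots, v^\tau) = 0$, every remaining contribution really has strictly fewer than $k$ derivatives on the distinguished factor $v^\tau_{\nu_1}$ (and at most $k$ on the other factors). This is precisely what enables the weighted Young's inequality to split low-order and high-order norms of $v^\tau$, and thereby makes the contradictory scaling in $\tau$ possible; the rest of the argument is essentially a higher-degree adaptation of the proof of Theorem \ref{Main theorem}.
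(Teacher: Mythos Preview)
Your proof is correct but uses a genuinely different device from the paper's. For the implication (ii) $\Rightarrow$ (i), the paper remarks that a direct adaptation of the bilinear argument fails because $\LLL(\nabla^k u)$ may contain terms in which a given component $u_1$ does not appear; it circumvents this by choosing the auxiliary function in the definition of the $\mathcal{H}^1$ norm to be $\eta = \psi^r$ and exploiting $\int_{\R^n} \LLL(\nabla^k[\psi((x-\cdot)/t)\,u]) = 0$ directly, so that the Leibniz expansion distributes $\psi$ over all $r$ factors at once. Your polarization sidesteps the same obstruction in a different way: in the symmetric multilinear form $\M(u^1,\dots,u^r)$ every term contains exactly one factor from $u^1$, so Theorem~\ref{Homogeneous bilinear theorem} applies verbatim. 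For the first-category claim, the paper again uses the $r$-th power device, taking $b^r$ as the BMO test function and expanding $\int_{\R^n}\LLL(\nabla^k(bu)) = 0$ by Leibniz in all $r$ factors, whereas you use $b$ itself and expand only the distinguished slot of $\int_{\R^n}\M(bv,v,\dots,v)=0$; in either case one arrives at an inequality where every surviving term carries at least one derivative on $b$ and hence strictly fewer than $k$ derivatives on one $v$-factor, and the weighted-Young/rescaling contradiction proceeds identically. Your route is more economical in that it reduces (ii) $\Rightarrow$ (i) to a theorem already established, while the paper's $\psi^r$/$b^r$ trick keeps the argument self-contained and avoids invoking the multilinear machinery.
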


When (i) holds, the $r$-homogeneity of $\LLL$ follows from a standard scaling argument. The converse direction cannot be proved by simply adapting the proof of Theorem \ref{Bilinear theorem} since $\LLL$ may contain terms where partial derivatives of $u_1$ do not appear in any of the factors. For this reason, we fix $\psi \in C_c^\infty(\R^n)$ with $\supp(\psi) \subset B(0,1)$ and $\int_{\R^n} \psi^r \neq 0$ and set $\eta \defeq \psi^r$ in the definition of the norm $\|h\|_{\mathcal{H}^1} \defeq \|\sup_{t > 0} |h * \eta_t|\|_{L^1}$.

\begin{proof}[Proof of direction (ii) $\Rightarrow$ (i)]
Let $u \in \dot{W}^{k,r}(\R^n,\R^m)$, $x \in \R^n$ and $t > 0$. Then $\psi_t(x-\cdot) u \in W^{k,r}(\R^n,\R^m)$ since $\psi_t(x-\cdot)$ is compactly supported. By (ii) and approximation,
\begin{equation} \label{Vanishing integral for homogeneous polynomials}
\int_{\R^n} \LLL \left( \nabla^k \left[ \psi \left( \frac{x-\cdot}{t} \right) u \right] \right) = 0.
\end{equation}
Next, the Leibniz rule gives
\[\begin{array}{lcl}
& & \ds \LLL \left( \nabla^k \left[ \psi \left( \frac{x-\cdot}{t} \right) u \right] \right) \\
&=& \displaystyle \sum_{\Theta,\nu} C_{\Theta, \nu} \prod_{i=1}^r \partial^{\theta^i} \left[ \psi \left( \frac{x-\cdot}{t} \right) u_{\nu_i} \right] \\
&=& \displaystyle \eta \left( \frac{x-\cdot}{t} \right) \LLL(\nabla^k u)
+ \sum_{\Theta,\nu} \sum_{\Gamma \prec \Theta} C_{\Theta,\Gamma,\nu} \prod_{i=1}^r \partial^{\theta^i-\gamma^i} \left[ \psi \left( \frac{x-\cdot}{t} \right) \right] \partial^{\gamma^i} u_{\nu_i}.
  \end{array}\]
As before, we pick a cube $Q \supset B(x,t)$ with $|Q| \lesssim_n t^n$ and use the equality above, Lemma \ref{Higher order Poincare lemma} and \eqref{Vanishing integral for homogeneous polynomials} to write
\[\begin{array}{lcl}
& & \ds \int_{\R^n} \eta_t(x-\cdot) \LLL (\nabla^k u) \\
&=& \ds \sum_{\Theta,\nu} C_{\Theta,\nu} \frac{1}{t^n} \int_Q \prod_{i=1}^r \left[ \psi \left( \frac{x-\cdot}{t} \right) \partial^{\theta^i} \left( u_{\nu_i} - P^{k-1}_Q u_{\nu_i} \right) \right] \\
&=& \ds \sum_{\Theta,\nu} \sum_{\Gamma \prec \Theta} C_{\Theta,\Gamma,\nu} \frac{1}{t^n} \int_Q \prod_{i=1}^r \partial^{\theta^i-\gamma^i} \left[ \psi \left( \frac{x-\cdot}{t} \right) \right]\partial^{\gamma^i} \left( u_{\nu_i} - P^{k-1}_Q u_{\nu_i} \right).
  \end{array}\]
Consider a single term of the sum, choose $j \in \{1,\dots,r\}$ such that $\gamma^j < \theta^j$ and let $\epsilon > 0$ be small. Then, by H\"{o}lder's inequality and Lemma \ref{Higher order Poincare lemma},
\[\begin{array}{lcl}
& &   \ds \left| \frac{1}{t^n} \int_Q \prod_{i=1}^r \partial^{\theta^i-\gamma^i} \left[ \psi \left( \frac{x-\cdot}{t} \right) \right]\partial^{\gamma^i} \left( u_{\nu_i} - P^{k-1}_Q u_{\nu_i} \right) \right| \\
&\lesssim& \ds t^{-|\theta^j-\gamma^j|} \left( \Aint_Q \left| \partial^{\gamma^j} \left( u_{\nu_j} - P^{k-1}_Q u_{\nu_j} \right) \right|^\frac{r-\epsilon}{1-\epsilon} \right)^\frac{1-\epsilon}{r-\epsilon} \\
&\cdot& \ds \prod_{i \neq j} t^{-|\theta^i-\gamma^i|} \left( \Aint_Q \left| \partial^{\gamma^i} \left( u_{\nu_i} - P^{k-1}_Q u_{\nu_i} \right) \right|^{r-\epsilon} \right)^\frac{1}{r-\epsilon} \\
&\lesssim& \ds [M_{r-\epsilon} (\nabla^k u)(x)]^r.
  \end{array}\]
We conclude via Hardy-Littlewood Maximal Theorem as before.

\end{proof}

\begin{proof}[Completion of the proof of Theorem \ref{Extension to homogeneous polynomials}]
Seeking contradiction, assume
\[\left\{ \sum_{j=1}^\infty \lambda^j \LLL(\nabla^k u^j) \colon \sup_{j \in \N} \|u^j\|_{W^{k,r}} \le 1, \; \sum_{j=1}^\infty |\lambda^j| < \infty \right\}\]
is of the second category in $\HR$. Set $V \defeq \{\pm \LLL(\nabla^k u) \colon \|u\|_{W^{k,r}} \le 1\} \subset \HR$; by condition (i) of Theorem \ref{Extension to homogeneous polynomials}, $V$ is bounded.
Now condition (iv) of Lemma \ref{Main lemma} is satisfied, and therefore
\begin{equation} \label{False inequality for P 2}
\|b\|_{\text{BMO}} \lesssim \sup_{h \in V} \int_{\R^n} bh = \sup_{u \neq 0} \frac{\left| \int b \LLL(\nabla^k u) \right|}{\|u\|_{W^{k,r}}^r}
\end{equation}
for all $b \in \BMO$. We will show that \eqref{False inequality for P 2} leads to a contradiction.

\vspace{0.2cm}
When $b \in \CO \setminus \{0\}$, we use the Leibniz rule as in the proof above to get
\begin{equation} \label{Integration by parts}
\int_{\R^n} b^r \LLL(\nabla^k u) = \sum_{\Theta,\nu} \sum_{\Gamma \prec \Theta} C_{\Theta,\Gamma,\nu} \int_{\R^n} \prod_{i=1}^r \partial^{\theta^i-\gamma^i} b \, \partial^{\gamma^i} u_{\nu_i}.
\end{equation}
As in the proof of Theorem \ref{Main theorem}, we fix $b \in \CO \setminus \{0\}$ and a parameter $\tau > 0$ and use \eqref{False inequality for P 2} to select $\tilde{u} = u(\tau \cdot) \in W^{k,r}(\R^n,\R^m) \setminus \{0\}$ satisfying
\begin{equation} \label{False norm estimate 2}
\|b^r\|_{\text{BMO}} = \|b^r(\tau \cdot)\|_{\text{BMO}} \lesssim \frac{\displaystyle \left| \int_{\R^n} b^r(\tau \cdot) \LLL(\nabla^k [u(\tau \cdot]) \right|}{\|u(\tau \cdot)\|_{W^{k,r}}^r}.
\end{equation}

By \eqref{Integration by parts} and a change of variables,
\[\begin{array}{lcl}
& & \displaystyle  \int_{\R^n} b^r(\tau \cdot) \LLL(\nabla^k [u(\tau \cdot]) \\
&=& \displaystyle \sum_{\Theta,\sigma} \sum_{\Gamma \prec \Theta} C_{\Theta,\Gamma,\nu} \int_{\R^n} \prod_{i=1}^r \partial^{\theta^i-\gamma^i} [b(\tau \cdot)] \, \partial^{\gamma^i} [u_{\nu_i}(\tau \cdot)] \\
&=& \displaystyle \sum_{\Theta,\sigma} \sum_{\Gamma \prec \Theta} C_{\Theta,\Gamma,\nu} \tau^{-n} \int_{\R^n} \prod_{i=1}^r \tau^{|\theta^i|} \partial^{\theta^i-\gamma^i} b \, \partial^{\gamma^i} u_{\nu_i} \\
&\hspace{0.15cm} \lesssim_b& \displaystyle \sum_{\Theta,\sigma} \sum_{\Gamma \prec \Theta} \tau^{-n} \int_{\R^n} \left| \prod_{i=1}^r \tau^{|\theta^i|} \partial^{\gamma^i} u_{\nu_i} \right|.
  \end{array}\]
We next control a single term by the generalized Young's inequality. Choosing $j \in \{1,\dots,r\}$ such that $\gamma_j < \theta_j$ we get
\[\begin{array}{lcl}
      \displaystyle \int_{\R^n} \left| \prod_{i=1}^r \tau^{|\theta^i|} \partial^{\gamma^i} u_{\nu_i} \right|
&=& \displaystyle \int_{\R^n} \left| M \tau^{|\theta^j|} \partial^{\gamma^j} u_{\nu_j} \prod_{i \neq j} \frac{\tau^{|\theta^i|}}{M^\frac{1}{r-1}} \partial^{\gamma^i} u_{\nu_i} \right| \\
&\le& \displaystyle \frac{\tau^{r |\theta^j|} M^r}{r} \int_{\R^n} |\partial^{\gamma^j} u_{\nu_j}|^r + \sum_{i \neq j} \frac{\tau^{r |\theta^i|}}{r M^{r'}} \int_{\R^n} |\partial^{\gamma^i} u_{\nu_i}|^r.
  \end{array}\]
By combining the inequalities and using \eqref{False norm estimate 2} we conclude that
\[\begin{array}{lcl}
&& \displaystyle \sum_{\Theta,\nu} \sum_{\Gamma \prec \Theta, \gamma^j < \theta^j} \left( \tau^{r |\gamma^j| - n} \int_{\R^n} |\partial^{\gamma^j} u_{\nu_j}|^r + \sum_{i \neq j} \tau^{r |\gamma^i| - n} \int_{\R^n} |\partial^{\gamma^i} u_{\nu^i}|^r \right) \\
&=& \displaystyle \sum_{\Theta,\nu} \sum_{\Gamma \prec \Theta, \gamma^j < \theta^j} \left( \int_{\R^n} |\partial^{\gamma^j} [u_{\nu_j}(\tau \cdot)|^r + \sum_{i \neq j} \int_{\R^n} |\partial^{\gamma^i} [u_{\nu_i}(\tau \cdot)]|^r \right) \\
&\lesssim& \displaystyle \|u(\tau \cdot)\|_{W^{k,r}}^r \\
&\hspace{0.11cm} \lesssim_b& \displaystyle \sum_{\Theta,\nu} \sum_{\Gamma \prec \Theta, \gamma^j < \theta^j} \left( \tau^{r |\theta^j|-n} M^r \int_{\R^n} |\partial^{\gamma^j} u_{\nu_j}|^r + \sum_{i \neq j} \frac{\tau^{r |\theta^i|-n}}{M^{r'}} \int_{\R^n} |\partial^{\gamma^i} u_{\nu_i}|^r \right).
  \end{array}\]
For every term of the series we have $|\gamma^j| < |\theta^j|$ and $|\gamma^i| \le |\theta^i|$ for all $i \neq j$, and once again we get a contradiction by choosing first $M$ large enough and then $\tau$ small enough.
\end{proof}

\subsection{$\mathcal{H}^1$ regular quantities not enjoying weak continuity} \label{H1 regular quantities not enjoying weak continuity}
Theorem \ref{Extension to homogeneous polynomials} allows us to find, for homogeneous polynomials of partial derivatives, a negative answer to the question of Coifman, Lions, Meyer, and Semmes quoted at \textsection \ref{The equivalence between H1 regularity and weak sequential continuity}. There exist homogeneous polynomials $\LLL$ satisfying $\int_{\R^n} \LLL(\nabla^k \varphi) = 0$ for all $\varphi \in C_c^\infty(\R^n,\R^m)$ but not being null Lagrangians. While this fact is undoubtedly well-known to experts, we give an explicit example for the reader's convenience.

\begin{prop} \label{Counterexample proposition}
Define $\LLL \colon X(3,2,2) \to \R$ via
\[\begin{array}{lcl}
    \ds \LLL(\nabla^2 (u,v))
&\defeq& \ds \partial_{xx} u \partial_{yy} v \partial_{zz} v - \frac{1}{2} \partial_{xx} u \partial_{yz} v \partial_{yz} v - \partial_{xy} u \partial_{xy} v \partial_{zz} v \\
&+& \ds \partial_{yz} u \partial_{xx} v \partial_{yz} v - \partial_{zz} u \partial_{xx} v \partial_{yy} v + \frac{1}{2} \partial_{zz} u \partial_{xy} v \partial_{xy} v.
  \end{array}\]
Then $\|\LLL(\nabla^2 (u,v))\|_{\mathcal{H}^1} \lesssim \|\nabla^2(u,v)\|_{L^3}^3$ for every $(u,v) \in \dot{W}^{2,3}(\R^3,\R^2)$ and yet $(u^l,v^l) \rightharpoonup (u,v)$ in $\dot{W}^{2,3}(\R^3,\R^2)$ does not imply that $\LLL(\nabla^2(u^l,v^l)) \to \LLL(\nabla^2(u,v))$ in $\mathcal{D}'(\R^3)$.
\end{prop}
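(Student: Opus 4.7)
The first claim, the $\mathcal{H}^1$ bound, I would reduce to Theorem~\ref{Extension to homogeneous polynomials}. Each of the six terms of $\LLL$ is a product of three second-order partial derivatives, so $\LLL$ is $3$-homogeneous, and by the implication $\mathrm{(ii)} \Rightarrow \mathrm{(i)}$ of that theorem it suffices to verify that $\int_{\R^{3}}\LLL(\nabla^{2}(u,v))\,dx = 0$ for every $(u,v)\in C_c^\infty(\R^{3},\R^{2})$. Writing
\[
\LLL(\nabla^{2}(u,v)) = N_{xx}(v)\,u_{xx} + N_{xy}(v)\,u_{xy} + N_{yz}(v)\,u_{yz} + N_{zz}(v)\,u_{zz},
\]
where each $N_{\alpha}(v)$ is quadratic in the Hessian entries of $v$, and integrating by parts twice in $u$, one reduces this to the pointwise identity
\[
\partial_{xx} N_{xx}(v) + \partial_{xy} N_{xy}(v) + \partial_{yz} N_{yz}(v) + \partial_{zz} N_{zz}(v) = 0
\]
for every smooth $v$. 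The Leibniz rule expands the left-hand side into a sum of products of derivatives of $v$ of total order six, and the factors of $\tfrac{1}{2}$ in front of $u_{xx}v_{yz}^{2}$ and $u_{zz}v_{xy}^{2}$ in the definition of $\LLL$ are arranged precisely so that every such product cancels. Carrying out this bookkeeping is the main computational step.

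For the second claim, failure of weak continuity, I would avoid an explicit oscillatory construction and argue by an obstruction. The jet $\nabla^{2}(u,v)$ takes values in $X(3,2,2)$, a matrix space with only two rows (one for each of $u$ and $v$), so every minor of $\nabla^{2}(u,v)$ has size at most two and hence degree at most two in its entries. By the result of Ball, Currie, and Olver quoted just before Proposition~\ref{H1 regularity of null Lagrangians}, every null Lagrangian on $X(3,2,2)$ is an affine combination of such minors and is therefore of degree at most two. Since $\LLL$ is a genuine polynomial of degree three, it cannot be a null Lagrangian, and the equivalence $\mathrm{(i)} \Leftrightarrow \mathrm{(iii)}$ of Theorem~\ref{Ball-Currie-Olver theorem} then yields a sequence $(u^{l},v^{l})\rightharpoonup(u,v)$ in $\dot{W}^{2,3}(\R^{3},\R^{2})$ along which $\LLL(\nabla^{2}(u^{l},v^{l}))$ fails to converge to $\LLL(\nabla^{2}(u,v))$ in $\mathcal{D}'(\R^{3})$.

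The hard part is the Leibniz-rule cancellation identity for the four coefficients $N_{\alpha}$; without the precise numerical factors in $\LLL$ this identity would fail, and one really has to track the many pairs of third-order derivatives of $v$ that the expansion produces. Once the identity is in place, Theorem~\ref{Extension to homogeneous polynomials} delivers the $\mathcal{H}^{1}$ bound, and the row-count-versus-degree mismatch above rules out weak continuity with no additional oscillatory construction.
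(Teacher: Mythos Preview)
Your argument for the $\mathcal{H}^1$ bound is the same as the paper's: both verify condition (ii) of Theorem~\ref{Extension to homogeneous polynomials} by integrating by parts to move the two derivatives off $u$ and checking that the resulting coefficients in front of $u\,\partial^{\beta'}v\,\partial^{\gamma'}v$ all vanish.

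Your argument for the failure of weak continuity, however, has a genuine gap. The row-count claim ``$\nabla^{2}(u,v)\in X(3,2,2)$ has two rows, so every minor has size at most two and every null Lagrangian has degree at most two'' is false. The Hessian determinant $\mathbf{H}u=\det(\partial_{i}\partial_{j}u)_{i,j=1}^{3}$ is a degree-$3$ null Lagrangian on $X(3,1,2)$ and hence, by ignoring $v$, also on $X(3,2,2)$. The point is that the Ball--Currie--Olver characterization does \emph{not} refer to minors of the $m\times\binom{n+k-1}{k}$ array $[\partial^{\alpha}u_{i}]$; for $k\ge 2$ the relevant minors are those of the $n\times n$ Jacobian of the $(k-1)$-jet (equivalently, Jacobian minors of $D(\nabla^{k-1}u)$), which can have size up to $n$. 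In the case at hand $n=3$, so cubic null Lagrangians do exist and your degree obstruction disappears. One would need a finer argument to see that the specific $\LLL$ is not an affine combination of such minors, and this is not obviously easier than the direct route.

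The paper instead gives an explicit oscillatory counterexample: take $u^{l}=x^{2}\psi_{B}$ constant in $l$ and $v^{l}(x,y,z)=l^{-2}\sin(l(y+z))\psi_{B}$, so that $(u^{l},v^{l})\rightharpoonup(x^{2}\psi_{B},0)$ in $\dot W^{2,3}$, while inside $B$ one computes $\LLL(\nabla^{2}(u^{l},v^{l}))=\sin^{2}(l(y+z))$, which converges weakly to $\tfrac12\neq 0$. This sidesteps the structural question of which cubic null Lagrangians exist on $X(3,2,2)$.
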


\begin{proof}
When $u,v \in C_c^\infty(\R^3)$, an integration by parts gives
\[\begin{array}{lcl}
    \ds \int_{\R^3} \LLL(\nabla^2 (u,v))
&=& \ds \sum_{|\alpha|,|\beta|,|\gamma|=2} c_{\alpha,\beta,\gamma} \int_{\R^n} \partial^\alpha u \partial^\beta v \partial^\gamma v \\
&=& \ds \sum_{|\beta'|+|\gamma'|=6} d_{\beta',\gamma'} \int_{\R^n} u \partial^{\beta'} v \partial^{\gamma'} v = 0
  \end{array}\]
since, as is easily checked, $d_{\beta',\gamma'} = 0$ for every $\beta'$ and $\gamma'$. By Theorem \ref{Extension to homogeneous polynomials}, $\|\LLL(\nabla^2 (u,v))\|_{\mathcal{H}^1} \lesssim \|\nabla^2(u,v)\|_{L^3}^3$ for all $(u,v) \in \dot{W}^{2,3}(\R^3,\R^2)$.

However, $\LLL$ is not weakly sequentially continuous. Indeed, consider $u = u^l \defeq x^2 \psi_B$, where $\psi$ is a smooth cutoff function for a ball $B \subset \R^3$, and $v^l(x,y,z) \defeq l^{-2} \sin(ly+lz) \psi_B$. Then $(u^l,v^l) \rightharpoonup (u,0)$ in $\dot{W}^{2,3}(\R^3,\R^2)$ but $\LLL(\nabla^2(u^l,v^l)) = \sin^2(ly+lz)$ in $B$ and so $\LLL(\nabla^2(u^l,v^l))$ does not converge to $0$ in $\mathcal{D}'(\R^3)$.
\end{proof}

The basic idea of Proposition \ref{Counterexample proposition} is that the integration by parts creates cancellations of partial derivatives of $v$ but such cancellations are not present in integrals $\int_{\R^3} \varphi \LLL(\nabla^2 (u,v))$, where $\varphi \in C_c^\infty(\R^3)$.

\section{Open problems related to Question \ref{CLMS question}} \label{Open questions}
In this section we discuss open questions on the surjectivity of the Jacobian operator in Hardy spaces. The core of all the non-surjectivity results of this article is the incompatibility of the scaling properties of inhomogeneous Sobolev spaces and the target space $\HR$, and this particular obstacle to surjectivity is not present when homogeneous Sobolev spaces are used as domains of definition of the operators or the surjectivity question is treated in bounded domains.

\subsection{Homogeneous Sobolev spaces} \label{Related problems and conjectures}
Coifman, Lions, Meyer, and Semmes deduced Theorem \ref{Jacobian decomposition theorem} by proving the norm equivalence
\begin{equation} \label{CLMS norm equivalence}
\|b\|_{\text{BMO}} \approx \sup \left\{ \int_{\R^n} b J u \colon u \in \W, \; \int_{\R^n} |\D u|^n \le 1 \right\}
\end{equation}
and using direction $(i) \Rightarrow (iii)$ of Lemma \ref{Main lemma}. In view of \eqref{CLMS norm equivalence} it is natural to ask whether $J$ maps the inhomogeneous Sobolev space $\dot{W}^{1,n}(\R^n,\R^n)$ onto $\HR$. Recall that when $1 < p < \infty$, the Hardy space $\mathcal{H}^p(\R^n)$ coincides with $L^p(\R^n)$. The following conjecture was posed by T. Iwaniec in 1997.

\begin{conjecture}[~\cite{Iwa97}] \label{Tadeusz conjecture}
For every $n \ge 2$ and every $p \in [
1,\infty[$ the Jacobian operator $J \colon \dot{W}^{1,np}(\R^n,\R^n) \to \mathcal{H}^p(\R^n)$ has a continuous right inverse.
\end{conjecture}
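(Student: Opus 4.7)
The plan is to construct the continuous right inverse by combining a sharp local solvability result for the Jacobian equation with an atomic decomposition of $\mathcal{H}^p(\R^n)$, exploiting the $n$-homogeneity $J(\lambda^{1/n} u) = \lambda J u$ to assemble local solutions into a global one. The heuristic reason this problem is qualitatively different from the inhomogeneous version settled in Theorem \ref{Main theorem} is that the scaling obstruction disappears: under the dilation $u \mapsto u(\tau\cdot)$, the quantities $\|\D u\|_{L^{np}}^n$ and $\|J u\|_{\mathcal{H}^p}$ scale by the same power of $\tau$, so no analogue of the inequality \eqref{False inequality} can be derived and the whole strategy underlying the non-surjectivity results of this article is neutralized.

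The first step would be to establish a quantitative local building block: given a cube $Q \subset \R^n$ and $f \in L^p(Q)$ with $\int_Q f = 0$, produce $u \in \dot{W}^{1,np}(\R^n,\R^n)$ with $\supp(u) \subset 2Q$, $J u = f \chi_Q$ almost everywhere, and the scale-invariant estimate $\|\D u\|_{L^{np}(\R^n)}^n \lesssim \|f\|_{L^p(Q)}$, with the implicit constant depending only on $n$ and $p$. A plausible route is to first solve $\operatorname{div} w = f$ by a Bogovski\u{\i}-type operator and then lift from the divergence equation to the Jacobian equation via the iteration of Cupini--Dacorogna--Kneuss \cite{CDK09} or the constructive scheme of Kneuss \cite{Kne12}, at each stage tracking how the $L^{np}$-norm of $\D u$ depends on the $L^p$-norm of $f$. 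Linearity is not required; continuity together with the $n$-homogeneity of $J$ will be enough.

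With the building block in hand, the global construction proceeds as follows. Choose an atomic decomposition $h = \sum_j \lambda_j a_j$ of $h \in \mathcal{H}^p(\R^n)$, with each $a_j$ supported on a cube $Q_j$, $\|a_j\|_{L^p} \lesssim 1$, $\int a_j = 0$, and $\sum_j |\lambda_j|^p \lesssim \|h\|_{\mathcal{H}^p}^p$. Apply the building block to obtain $u_j$ with $J u_j = a_j$, rescale by $v_j \defeq \operatorname{sgn}(\lambda_j)|\lambda_j|^{1/n} u_j$ (handling the sign by a coordinate reflection when $\lambda_j < 0$ and $n$ is even), so that $J v_j = \lambda_j a_j$, and set $u \defeq \sum_j v_j$. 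If the enlarged supports $\{2Q_j\}$ were pairwise disjoint, one would at once obtain $J u = h$ together with
\[\|\D u\|_{L^{np}}^{np} = \sum_j \|\D v_j\|_{L^{np}}^{np} = \sum_j |\lambda_j|^p \|\D u_j\|_{L^{np}}^{np} \lesssim \sum_j |\lambda_j|^p \lesssim \|h\|_{\mathcal{H}^p}^p,\]
closing the estimate.

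The main obstacle is that $\{2Q_j\}$ is never pairwise disjoint for a general atomic decomposition: atoms of different dyadic generations overlap, and the nonlinearity of $J$ produces cross terms of the form $\prod_{i=1}^n \partial_{\sigma(i)} v_{j_i, i}$ with a permutation $\sigma$ and indices $j_1, \dots, j_n$ not all equal, which are not controlled by the diagonal sum and do not visibly cancel. Circumventing this requires either an existence theorem producing solutions on pairwise disjoint enlargements of the $Q_j$, which seems incompatible with the sharp norm control, or a stratified scheme indexed by dyadic generation in which one inductively modifies previously constructed solutions to absorb the new cross terms; the latter generates a tower of error estimates that is difficult to close. The combination of the quantitative local solvability and the nonlinear superposition problem is precisely the crux that makes the conjecture hard, and tackling either piece in a genuinely sharp way is expected to require new ideas beyond the methods developed in this paper.
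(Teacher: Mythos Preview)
The statement you were asked to prove is a \emph{conjecture}, not a theorem: the paper does not prove it and explicitly presents it as an open problem in Section~\ref{Open questions}. There is therefore no proof in the paper to compare your proposal against.

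Your proposal is not a proof either, and to your credit you recognise this: you outline an atomic-decomposition strategy, then correctly identify the fatal obstacle --- the nonlinearity of $J$ produces cross terms when supports of rescaled local solutions overlap, and there is no mechanism to control or cancel them. That diagnosis is accurate. A few further remarks: even the ``quantitative local building block'' you propose as a first step is not known. The results of \cite{CDK09} and \cite{Kne12} give existence for smooth or H\"older data with positivity constraints and boundary conditions, but the sharp scale-invariant estimate $\|\D u\|_{L^{np}}^n \lesssim \|f\|_{L^p}$ for arbitrary mean-zero $f \in L^p(Q)$, with $u$ supported in a fixed dilate of $Q$, is itself open and would already be a substantial advance. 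Your observation about scaling is correct and is exactly the point the paper makes after Proposition~\ref{Proposition on failure of norm control}: the non-surjectivity arguments of this article exploit the mismatch between the scaling of $\|\cdot\|_{W^{1,n}}$ and $\|\cdot\|_{\mathcal{H}^1}$, and that mismatch vanishes for the homogeneous seminorm, so those arguments say nothing about Conjecture~\ref{Tadeusz conjecture}.

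In short: no gap to name beyond the one you already named, but be aware that what you wrote is a discussion of why the conjecture is hard, not a proof attempt that can be assessed as correct or incorrect.
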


In ~\cite{Lin15} the author made progress on Conjecture \ref{Tadeusz conjecture} in the case $n=2, p=1$ via calculus of variations and Banach space geometry. Recall the norm $\|\cdot\|_{\text{BMO}_\SSS}$ defined in \eqref{Definition of BMOS norm} and denote the dual norm of ${\mathcal H}^1(\C)$ by $\|\cdot\|_{{\mathcal H}^1_\SSS}$. By the definition of $\|\cdot\|_{\text{BMO}_\SSS}$,
\[\|J u\|_{{\mathcal H}^1_\SSS} = \sup_{\substack{b \in \operatorname{VMO}(\C) \\ \|b\|_{\text{BMO}_\SSS}=1}} \frac{\int_\C b J u}{\int_\C |u_{\bar{z}}|^2} \int_\C |u_{\bar{z}}|^2 \le \int_\C |u_{\bar{z}}|^2\]
for all (non-constant) $u \in \dot{W}^{1,2}(\C,\C)$ and the inequality is sharp.

There exists, however, a large set of datas $h \in \HR$ for which the equation $J u = h$ has a solution satisfying $\int_\C |u_{\bar{z}}|^2 = \|J u\|_{{\mathcal H}^1_\SSS}$ -- in particular, the set is closed and contains all the extreme points of the unit ball of $(\HR, \|\cdot\|_{\mathcal{H}^1_\SSS})$ (see ~\cite[Theorems 1.19 $\&$ 1.27]{Lin15}). We refer to ~\cite{Lin15} and the forthcoming article ~\cite{Lin16} for more information on the topic. The author feels that there is already enough evidence for the following conjecture which appears as a question in ~\cite{Lin15}.

\begin{conjecture} \label{Oma konjektuuri}
For every $h \in {\mathcal H}^1(\C)$ there exists $u \in \dot{W}^{1,2}(\C,\C)$ such that $J u = h$ and $\|J u\|_{{\mathcal H}^1_S} = \int_\C |u_{\bar{z}}|^2$.
\end{conjecture}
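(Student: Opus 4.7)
The plan is to solve the constrained minimization
\[ M(h) \defeq \inf\bigl\{ \|f\|_{L^2(\C)}^2 : f \in \LL, \; |\SSS f|^2 - |f|^2 = h \text{ in } \mathcal{D}'(\C) \bigr\} \]
by the direct method and show via a Lagrange multiplier that $M(h) = \|h\|_{{\mathcal H}^1_\SSS}$; a minimizer $f^*$ would then yield the desired $u \in \dot{W}^{1,2}(\C,\C)$ with $u_{\bar z} = f^*$.

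First I would establish existence of a minimizer. Granting for the moment that $h$ lies in the range of $J \colon \dot{W}^{1,2}(\C,\C) \to {\mathcal H}^1(\C)$---an open problem tied to the homogeneous analogue of Question \ref{CLMS question}---I take a minimizing sequence $(f^j) \subset \LL$. Weak $L^2$-compactness provides a subsequential limit $f^j \rightharpoonup f^*$; compensated compactness for the Jacobian, in the form of Corollary \ref{Compensated integrability corollary} via the identity $Ju = |\SSS u_{\bar z}|^2 - |u_{\bar z}|^2$, forces $|\SSS f^*|^2 - |f^*|^2 = h$ in $\mathcal{D}'(\C)$, and weak lower semicontinuity of the $L^2$-norm then makes $f^*$ a minimizer.

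Next I would exploit the Euler--Lagrange equation. Polarizing \eqref{AIM formula} gives
\[ \tfrac{d}{dt}\bigg|_{t=0} \int_\C b \bigl(|\SSS(f^* + t\phi)|^2 - |f^* + t\phi|^2\bigr) = 2 \operatorname{Re} \int_\C \phi \, \overline{K_b f^*}, \]
so a formal Lagrange multiplier rule produces $b^* \in \operatorname{BMO}(\C)$ with $f^* = K_{b^*} f^*$. Self-adjointness of $K_{b^*}$ together with \eqref{Definition of BMOS norm} then yields
\[ \|f^*\|_{L^2}^2 = \int_\C f^* \, \overline{K_{b^*} f^*} = \int_\C b^* h \quad\text{and}\quad \|b^*\|_{\text{BMO}_\SSS} = \|K_{b^*}\|_{L^2 \to L^2} \ge 1. \]
If one can further argue that $b^*$ may be taken in $\operatorname{VMO}(\C)$ with $\|b^*\|_{\text{BMO}_\SSS} = 1$---i.e.\ that $1$ is attained as the top of the spectrum of $K_{b^*}$, with $f^*$ itself as eigenvector---then $\|f^*\|_{L^2}^2 = \int_\C b^* h \le \|h\|_{{\mathcal H}^1_\SSS}$ by $\operatorname{VMO}$--${\mathcal H}^1$ duality, and combined with the converse inequality from the paper the proof is complete.

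The main obstacle, and where I expect the bulk of the work, is rigorously executing this Lagrange multiplier argument in the non-convex infinite-dimensional setting and establishing the spectral extremality above; the constraint $|\SSS f|^2 - |f|^2 = h$ is genuinely quadratic and the usual regular-value machinery does not apply out of the box. If the direct route stalls, I would pivot to a Krein--Milman strategy building on \cite{Lin15}: the set $E \defeq \{h \in {\mathcal H}^1(\C) : \exists u,\ Ju = h,\ \|Ju\|_{{\mathcal H}^1_\SSS} = \|u_{\bar z}\|_{L^2}^2\}$ is already known to be norm-closed and to contain every extreme point of the unit ball of $({\mathcal H}^1(\C), \|\cdot\|_{{\mathcal H}^1_\SSS})$; establishing convexity of $E$ and coupling this with a Bishop--Phelps density argument for norm-attaining functionals on $\operatorname{VMO}(\C)$ would reduce the general case to the extremal one already in hand.
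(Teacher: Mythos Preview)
The statement you are attempting to prove is presented in the paper as a \emph{conjecture}; the paper offers no proof, only motivation and partial evidence drawn from \cite{Lin15}. There is therefore no ``paper's own proof'' to compare against---the problem is genuinely open.

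Your proposal is a reasonable research outline rather than a proof, and you are largely candid about this. The most serious gap is at the very first step: you write ``granting for the moment that $h$ lies in the range of $J$,'' but surjectivity of $J \colon \dot{W}^{1,2}(\C,\C) \to \mathcal{H}^1(\C)$ is itself part of the content of the conjecture (and is the $n=2$, $p=1$ surjectivity assertion underlying Conjecture~\ref{Tadeusz conjecture}). Without it the infimum $M(h)$ is taken over an empty set for generic $h$ and the direct method has nothing to work on, so you are assuming the hardest part of what is to be proved. The subsequent Lagrange multiplier step is also a genuine obstacle, not a technicality: the constraint map $f \mapsto |\SSS f|^2 - |f|^2$ is quadratic and its differential at $f^*$ has no reason to be surjective onto $\mathcal{H}^1(\C)$ or any workable codomain, so existence of a multiplier $b^*$ is not delivered by standard regular-value or Lyusternik arguments; even granting existence in some dual, the further claims that $b^* \in \operatorname{VMO}(\C)$ and $\|b^*\|_{\text{BMO}_\SSS} = 1$ (rather than $\ge 1$) are unsupported. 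Your fallback Krein--Milman route hinges on convexity of the set $E$, which is likewise open. In short, each step you flag as ``the main obstacle'' is indeed one, and the step you treat as a provisional hypothesis is the deepest part of the conjecture.
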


A positive answer to Conjecture \ref{Tadeusz conjecture} would imply, among other things, a new characterization of the duality pairing of $b \in \operatorname{BMO}(\C)$ and $h \in \mathcal{H}^1(\C)$ as an $L^2$ inner product via formula \eqref{AIM formula}:
\[\int_\C bh = \int_\C f \overline{K_b f},\]
where $|\SSS f|^2 - |f|^2 = J_u = h$. Conjecture \ref{Oma konjektuuri} in turn would imply, for instance, that we would have precise control on the norms of all the mappings involved: $\int_\C |f|^2 = \|h\|_{{\mathcal H}^1_\SSS}$ and $\|K_b\|_{L^2 \to L^2} = \|b\|_{\text{BMO}_\SSS}$.

\subsection{Bounded strongly Lipschitz domains}
When a domain $\Omega \subset \R^n$ is bounded and strongly Lipschitz, the estimate $\|Ju\|_{\mathcal{H}^1_z(\Omega)} \lesssim_\Omega \prod_{j=1}^n \|\nabla u_j\|_{L^n}$ holds for all $u \in W^{1,n}_0(\Omega,\R^n)$. In ~\cite{LYS05} Z.J. Lou, S.Z. Yang, and D.J. Song asked, in analogy to Question \ref{CLMS question}, whether $J$ maps $W_0^{1,2}(\Omega, \R^2)$ onto $\Hz$.

The author gave a negative answer for all (nonempty) bounded Lipschitz domains  of $\R^2$ in ~\cite[Theorem 8.12]{Lin15} by using the theory of T. Iwaniec and V. Sver\'{a}k on mappings of integrable distortion (see ~\cite{IS93}). We pose the following conjecture which appears as a question in ~\cite{Lin15}.

\begin{conjecture} \label{My question}
The Jacobian maps $W^{1,2}(\Omega) \times W_0^{1,2}(\Omega)$ onto $\Hz$.
\end{conjecture}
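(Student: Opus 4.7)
The plan is to reduce Conjecture \ref{My question} to an atomic decomposition in $\Hz$ and then to upgrade that decomposition to single-pair surjectivity by combining it with density (Cupini--Dacorogna--Kneuss/Kneuss, cited in \textsection \ref{Failure of norm control}) and the two-dimensional weak continuity of $J$. To get an atomic decomposition $h = \sum_j \lambda^j J(u^j,v^j)$ with $u^j \in W^{1,2}(\Omega)$, $v^j \in W^{1,2}_0(\Omega)$ and $\sum|\lambda^j|\lesssim\|h\|_{\Hz}$, I apply Lemma \ref{Main lemma} to the symmetric bounded set
\[ V := \bigl\{ \pm J(u,v) : \|u\|_{W^{1,2}}\|v\|_{W^{1,2}}\le 1,\; v \in W^{1,2}_0(\Omega)\bigr\} \subset \Hz; \]
by the equivalence (i)$\Leftrightarrow$(iv) in that lemma, it suffices to prove the dual estimate
\[ \|b\|_{(\Hz)^*} \lesssim \sup\bigl\{|\langle b, J(u,v)\rangle| : (u,v)\in V\bigr\} \qquad \text{for all } b \in (\Hz)^*. \]
Since $\Omega$ is bounded and strongly Lipschitz, $(\Hz)^* \cong \operatorname{BMO}_r(\Omega)$, and each $b \in \operatorname{BMO}_r(\Omega)$ lifts via Jones' classical extension theorem to $\tilde b \in \BMO$ with $\|\tilde b\|_\BMO \lesssim \|b\|_{\operatorname{BMO}_r(\Omega)}$.

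Given such a $\tilde b$, the CLMS norm equivalence \eqref{CLMS norm equivalence} furnishes $(\tilde u, \tilde v)\in\dot{W}^{1,2}(\R^2,\R^2)$ with $\|\nabla \tilde u\|_{L^2},\|\nabla \tilde v\|_{L^2}\le 1$ and $\int_{\R^2}\tilde b\,J(\tilde u,\tilde v)\gtrsim \|\tilde b\|_\BMO$. Setting $u := \tilde u|_\Omega$ and $v := \tilde v|_\Omega - V$, with $V \in W^{1,2}(\Omega)$ a (say harmonic) extension of the trace $\tilde v|_{\partial \Omega}$, yields an admissible test pair $(u,v)$ at the price of two boundary corrections
\[ \int_{\R^2 \setminus \Omega} \tilde b\, J(\tilde u, \tilde v) \quad \text{and} \quad \int_\Omega b\, J(\tilde u, V), \]
which need to be absorbed into a fixed fraction of $\|\tilde b\|_\BMO$. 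Once the atomic decomposition is in hand, the upgrade to the single-pair statement of the conjecture is attempted via a diagonal argument: dense $h^j \to h$ in $\Hz$ should admit admissible pairs $(u^j,v^j)$ with bounded norms (rescaling $(u^j,v^j) \mapsto (\lambda u^j, \lambda^{-1} v^j)$ so that $\|u^j\|_{W^{1,2}}=\|v^j\|_{W^{1,2}}$), a weak limit $(u,v)$ is extracted, and the two-dimensional weak continuity of $J$ in $\mathcal{D}'(\Omega)$ identifies $J(u,v)=h$.

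The absorption of the boundary correction terms is the main obstacle. Since $J(\tilde u, V)$ need not belong to $\mathcal{H}^1(\R^2)$ and $V$ inherits only $H^{1/2}(\partial\Omega)$-control of its trace, the corrections are a priori of the same order as $\|\tilde b\|_\BMO$ and cannot be dropped by routine $\mathcal{H}^1$-estimates. One remedy is to exploit the large gauge freedom available to the CLMS extremizer: $J(\tilde u, \tilde v)$ is invariant under the transformation $(\tilde u, \tilde v)\mapsto(\tilde u, \tilde v + \varphi(\tilde u))$ for any $\varphi \in C^1(\R)$, and more generally under post-composition by area-preserving diffeomorphisms of $\R^2$, so one may hope to pick a representative for which $\tilde v|_{\partial\Omega}$ is small. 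A second, independent difficulty is the upgrade from atomic decomposition to single-pair surjectivity: the nonlinearity of $J$ prevents direct summation of atoms, so the uniform $W^{1,2}$-bound on the approximating pairs $(u^j,v^j)$ must be established by other means, perhaps via a quantitative version of the Cupini--Dacorogna--Kneuss theory for the Jacobian equation with partial Dirichlet boundary conditions.
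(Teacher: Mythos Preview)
The statement is a \emph{conjecture} in the paper, not a theorem: the author explicitly says ``We pose the following conjecture which appears as a question in~\cite{Lin15}'' and offers no proof. So there is no paper proof to compare against; the question is simply whether your proposal succeeds, and it does not --- as you yourself acknowledge in the final two paragraphs.

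Both gaps are genuine and your suggested remedies do not close them. For the boundary corrections: the gauge freedom $( \tilde u,\tilde v)\mapsto \Phi\circ(\tilde u,\tilde v)$ with $\det D\Phi=1$ can force the second component to vanish on $\partial\Omega$ only if the image $(\tilde u,\tilde v)(\partial\Omega)$ lies on a single level curve $\{\Phi_2=0\}$, and there is no reason the CLMS extremizer should have this property. More fundamentally, the CLMS extremizer for $\tilde b$ may be concentrated far from $\Omega$ (the CLMS norm equivalence is scale- and translation-invariant, while $\Omega$ is fixed), so the exterior term $\int_{\R^2\setminus\Omega}\tilde b\,J(\tilde u,\tilde v)$ can genuinely swallow the main term. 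For the upgrade step: an atomic decomposition is strictly weaker than surjectivity for a nonlinear operator, and your diagonal argument needs \emph{a priori} uniform $W^{1,2}$-bounds on solution pairs for a dense set of data. The Cupini--Dacorogna--Kneuss theory gives existence without such bounds, and the paper's Proposition~\ref{Proposition on failure of norm control} shows exactly how badly this kind of norm control can fail in the $\R^n$ setting; while that argument uses scaling and does not transfer to bounded $\Omega$, it gives no positive indication either, and a ``quantitative version'' of Cupini--Dacorogna--Kneuss with partial Dirichlet data is itself an open-ended problem.

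In short: the outline is a reasonable sketch of what a proof \emph{might} look like, but it is not a proof, and the paper does not claim one either.
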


In view of the theory of the Beurling transform on domains, Conjecture \ref{My question} is more natural than the one for $J \colon W_0^{1,2}(\Omega,\R^2) \to \Hz$. In higher dimensions the correct analogue of Conjecture \ref{My question} appears to be whether $J$ maps $W^{1,n}(\Omega,\R^{n-1}) \times W_0^{1,n}(\Omega)$ onto $\mathcal{H}^1_z(\Omega)$.

\subsection{$L^p$ spaces} \label{The Jacobian equation in Lp spaces}
We start the discussion on the Jacobian equation with data in $L^p(\R^n)$ by presenting the following analogue of Theorem \ref{Main theorem}.

\begin{theorem} \label{Main theorem 3}
For every $n \ge 2$ and $p \in ]1,\infty[$, the set
\[\left\{ \sum_{j=1}^\infty \lambda^j J u^j \colon \|u^j\|_{W^{1,np}} \le 1 \text{ for every } j \in \N, \, \; \sum_{j=1}^\infty |\lambda^j| < \infty \right\}\]
is of the first category in $L^p(\R^n)$.
\end{theorem}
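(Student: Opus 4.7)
The plan is to mirror the proof of Theorem \ref{Main theorem}, adapted to the $L^p$/$L^{p'}$ duality. Assume for contradiction that the set in question is of second category in $L^p(\R^n)$ and set $V \defeq \{Ju : \|u\|_{W^{1,np}} \le 1\}$, which is symmetric (replacing $u_1$ by $-u_1$ flips the sign of $Ju$ while preserving the Sobolev norm). Lemma \ref{Main lemma} $(\mathrm{iv}) \Rightarrow (\mathrm{i})$ then yields, for every $b \in L^{p'}(\R^n)$,
\[\|b\|_{L^{p'}} \lesssim \sup_{u \ne 0} \frac{\int_{\R^n} b \, Ju}{\|u\|_{W^{1,np}}^n}.\]
The goal is to contradict this for a suitable $b \in C_c^\infty(\R^n)$ by playing scaling against integration by parts.

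First I would record the scalings $\|b(\tau \cdot)\|_{L^{p'}} = \tau^{-n/p'} \|b\|_{L^{p'}}$ and $\|u(\tau \cdot)\|_{W^{1,np}}^{np} = \tau^{-n} A + \tau^{np-n} B$, where $A \defeq \int_{\R^n} |u|^{np}$ and $B \defeq \sum_{i,j} \int_{\R^n} |\partial_j u_i|^{np}$, together with the invariance $\int_{\R^n} b(\tau \cdot) J[u(\tau \cdot)] = \int_{\R^n} b \, Ju$. Applying the displayed inequality to $b(\tau \cdot)$, raising to the $p$-th power, and simplifying via $np/p' = np - n$ yields
\[\|b\|_{L^{p'}}^p \bigl(\tau^{-np} A + B\bigr) \lesssim \Bigl(\int_{\R^n} b \, Ju\Bigr)^p,\]
where $u \in W^{1,np}(\R^n,\R^n) \setminus \{0\}$ is the mapping supplied by the counter-assumption and depends on $\tau$.

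To estimate the right-hand side I would fix $b \in C_c^\infty(\R^n) \setminus \{0\}$ and a cube $Q \supset \supp(b)$, and use the integration by parts identity and Hadamard inequality from the proof of Theorem \ref{Main theorem} to obtain $\int_{\R^n} b \, Ju \lesssim_b \int_Q |u_1| \prod_{j=2}^n |\nabla u_j|$. Since $1/p' + n/(np) = 1$, H\"{o}lder's inequality bounds this by $|Q|^{1/p'} \|u_1\|_{L^{np}} \prod_{j=2}^n \|\nabla u_j\|_{L^{np}}$, and Young's inequality with exponents $n$ and $n/(n-1)$ and a free parameter $M > 0$ then gives
\[\|u_1\|_{L^{np}} \prod_{j=2}^n \|\nabla u_j\|_{L^{np}} \le \frac{M^n}{n} A^{1/p} + \frac{n-1}{n M^{n/(n-1)}} B^{1/p}.\]
Raising to the $p$-th power and combining with the previous display produces
\[\|b\|_{L^{p'}}^p \tau^{-np} A + \|b\|_{L^{p'}}^p B \lesssim_b M^{np} A + M^{-np/(n-1)} B.\]
I would then choose $M$ large enough, depending on $b$, to absorb the $B$ term on the right into the left, and subsequently take $\tau$ so small that $\|b\|_{L^{p'}}^p \tau^{-np}$ dominates the coefficient $M^{np}$ of $A$ on the right. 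Since $u \ne 0$ forces $A > 0$, this yields the desired contradiction. The main delicate step is the $M$-Young inequality: the exponents $n$ and $n/(n-1)$ are forced by the requirement that $A^{1/p}$ and $B^{1/p}$ appear on the right so that, after raising to the $p$-th power, the $A$ and $B$ terms match the form of the second-step lower bound.
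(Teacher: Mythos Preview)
Your proof is correct and follows essentially the same approach as the paper: scaling combined with integration by parts, Hadamard's inequality, and a Young/H\"older estimate to contradict the norm equivalence coming from Lemma~\ref{Main lemma}. The only cosmetic difference is that the paper applies Young's inequality pointwise on $\supp(b)$ and then H\"older to pass from $L^n$ to $L^{np}$, whereas you apply H\"older first and then Young on the resulting norms; both routes yield the same form $M^{np}A + M^{-np/(n-1)}B$ (your displayed Young inequality should carry a $\lesssim$ rather than $\le$ for the $B^{1/p}$ term, but this is harmless).
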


Theorem \ref{Main theorem 3} is demonstrated by a proof similar to that of Theorem \ref{Main theorem} by using the fact that $\|b(\tau \cdot)\|_{L^{p'}} = \tau^{-n/p'} \|b\|_{L^{p'}}$ for all $b \in L^{p'}(\R^n)$ and all $\tau > 0$. The only notable difference is the use of the integration by parts -- in this instance we make a further application of H\"{o}lder's inequality and estimate
\[\begin{array}{lcl}
      \displaystyle \int_{\R^n} b J u
&\le& \displaystyle \int_{\supp(b)} \left( M^n |u|^n + \frac{C(b)}{M^{n'}} |\D u|^n \right) \\
&\le& \displaystyle C(b,p) \left( M^{np} \int_{\R^n} |u|^{np} + \frac{1}{M^{n'p}} \int_{\R^n} |\D u|^{np} \right)^\frac{1}{p}.
  \end{array}\]
The rest of the details are left to the reader.

An important step towards solving Conjecture \ref{Tadeusz conjecture} for $p \in ]1,\infty[$ would be to find out whether the following analogue of \eqref{CLMS norm equivalence} holds:
\begin{equation} \label{Hypothetical norm equivalence}
\|b\|_{L^{p'}} \approx \sup_{\|\D u\|_{L^{np}} \le 1} \int_{\R^n} b J u \qquad \text{for all } b \in L^{p'}(\R^n).
\end{equation}
Condition \eqref{Hypothetical norm equivalence} is necessary for surjectivity of $J \colon \dot{W}^{1,np}(\R^n,\R^n)\to L^p(\R^n)$: setting $V \defeq \{J u \colon \|\D u\|_{L^{np}} \le 1\} \subset L^p(\R^n)$ we deduce from Lemma \ref{Main lemma} that $\left\{ \sum_{j=1}^\infty \lambda^j J u^j \colon \|\D u^j\|_{L^{np}} \le 1 \text{ for every } j \in \N, \, \; \sum_{j=1}^\infty |\lambda^j| < \infty \right\}$ is of the second category in $L^p(\R^n)$ if and only if \eqref{Hypothetical norm equivalence} holds. Conversely, if \eqref{Hypothetical norm equivalence} holds, then many techniques of ~\cite{Lin15} formulated for $\mathcal{H}^1(\C)$ are available in $L^p(\R^n)$, especially in the planar case.

When $n = 2$, condition \eqref{Hypothetical norm equivalence} has an interesting connection to operator norms of commutators $\mathcal{S} b - b \mathcal{S}$. Indeed, using formula \eqref{AIM formula} we get
\[
\sup_{\|\D u\|_{L^{np}} \le 1} \int_{\R^n} b J u \approx \sup_{\|f\|_{L^{2p}} \le 1} \int_\C f \overline{K_b f} \approx \|\mathcal{S} b - b \mathcal{S}\|_{L^{2p} \to L^{(2p)'}}\]
for every $b \in L^{p'}(\C)$, where the last two-sided estimate is proved by adapting the proof of equality of the operator norm and numerical radius of a self-adjoint operator in a Hilbert space. The author has been unable to prove or disprove \eqref{Hypothetical norm equivalence}.

\section*{Acknowledgement}
The author wishes to thank Kari Astala and Tadeusz Iwaniec for useful discussions on the subject matter of the article.

%    Bibliographies can be prepared with BibTeX using amsplain,
%    amsalpha, or (for "historical" overviews) natbib style.
\bibliography{mybibliography}
\bibliographystyle{amsplain}
%    Insert the bibliography data here.

\end{document}